\theoremstyle{theorem}
\newtheorem{theorem}{Theorem}[section]
\newtheorem{corollary}[theorem]{Corollary}
\newtheorem{lemma}[theorem]{Lemma}
\newtheorem{proposition}[theorem]{Proposition}
\theoremstyle{definition}
\newtheorem{definition}[theorem]{Definition}
\newtheorem{question}[theorem]{Question}
\newtheorem{example}[theorem]{Example}
\newtheorem{remark}[theorem]{Remark}
\numberwithin{equation}{subsection}
\newcommand{\m}{\mathfrak{m}}
\newcommand{\cal}{\mathcal}
\newcommand{\Spec}{\operatorname{Spec}}
\newcommand{\Hom}{\operatorname{Hom}}
\newcommand{\Ext}{\operatorname{Ext}}
\newcommand{\Ker}{\operatorname{Ker}}
\newcommand{\IM}{\operatorname{Im}}	
\newcommand{\Ann}{\operatorname{Ann}}
\newcommand{\pipn}{\pi^{1/{p^n}}}
\newcommand{\pipinfty}{\pi^{1/{p^\infty}}}
\newcommand{\pip}[1]{\pi^{1/{p^{#1}}}}
\newcommand{\cl}{\operatorname{cl}}
\newcommand{\clfg}{\operatorname{clfg}}
\newcommand{\intr}{\operatorname{tr}}
\newcommand{\taufg}{\tau^{\text{fg}}}
\newcommand{\fg}{finitely-generated}
\newcommand{\CM}{Cohen-Macaulay}
\newcommand{\charp}{characteristic $p>0$}
\newcommand{\echarz}{equal characteristic 0}
\newcommand{\sop}{system of parameters}
\newcommand{\sops}{systems of parameters}
\newcommand{\wrt}{with respect to}
\newcommand{\nzd}{non-zerodivisor}
\definecolor{blue-violet}{rgb}{0.54, 0.17, 0.89}
\definecolor{Blue}{rgb}{0.01, 0.28, 1.0}
\definecolor{gGreen}{rgb}{0.2, 0.8, 0.2}
\definecolor{Green}{rgb}{0.04, 0.85, 0.32}
\begin{document}

\title[Characteristic-free test ideals]{Characteristic-free test ideals}
\author[F.\ P\'erez]{Felipe P\'erez}
\author[Rebecca R.G.]{Rebecca R.G.}
\maketitle


\begin{abstract}
Tight closure test ideals have been central to the classification of singularities in rings of \charp, and via reduction to \charp, in \echarz\ as well. A summary of their properties and applications can be found in \cite{SurveyTestIdeals}.
In this paper, we extend the notion of a test ideal to arbitrary closure operations, particularly those coming from big \CM\ modules and algebras, and prove that it shares key properties of tight closure test ideals. Our main results show how these test ideals can be used to give a characteristic-free classification of singularities, including a few specific results on the mixed characteristic case. 
We also compute examples of these test ideals.
\end{abstract}

\section{Introduction}

The test ideal originated in the study of tight closure \cite{HoHu1}. Since then, it has been used to define a classification of singularities in rings of \charp\ \cite{HoHu1,HoHu2,HoHuStrongFRegularity}, which aligns well with the classification of singularities in \echarz\ \cite{SmithMultiplierTestIdeal,HaraGeometricTestIdeal}. The general idea is that the larger the test ideal, the closer the ring is to being regular, and the smaller the test ideal,  the singular the ring is. The gap in the literature on test ideals is the mixed characteristic case. Recent work of Ma and Schwede \cite{maschwede,MaSchwedeBCM} has begun to fill in this gap, from the perspective of test ideals of pairs. However, most existing results are heavily dependent on the characteristic of the ring, and it is not always known whether corresponding definitions actually agree. In this paper, we study a generalization of the test ideal in a characteristic-free setting. We study test ideals from the perspective of closure operations, mimicking the approach of Hochster and Huneke \cite{fndtc} with regard to the tight closure test ideal but broadening our definition to include test ideals coming from arbitrary closure operations.

We are motivated by work of the second named author on the connections between closure operations given by big \CM\ modules and algebras, and the singularities of the ring \cite{RGRebecca_Closures_Big_CM_modules_and_singularities,bigcmalgebraaxiom}, and encouraged by the fact that these connections hold in all characteristics. More precisely, in \cite{RGRebecca_Closures_Big_CM_modules_and_singularities}, the second named author proved that a ring is regular if and only if all closure operations satisfying certain axioms (Dietz closures) act trivially on modules over the ring. Since big \CM\ modules give Dietz closures, we expect further connections to hold between the singularities of the ring and the big \CM\ module closures over the ring, and we give some of those connections in this paper. In order to do this, we define and study the test ideals given by closures coming from big \CM\ modules and algebras. See Section \ref{testidealsandinteriors} for details.

We prove that the test ideal of a module closure has multiple equivalent definitions, which we use to get our main results connecting singularities to big \CM\ module test ideals.

\begin{theorem}
Let $(R,m,k)$ be a local ring and $E=E_R(k)$ the injective hull of the residue field.
\begin{enumerate}
\item Let cl be a residual closure operation. Then the test ideal $\tau_{\cl}(R)=\text{Ann}\, 0_E^{\cl}$. (Proposition \ref{prop: test = ann})
\item Let $\cl=\cl_B$ be a module closure. If $R$ is complete or $B$ is finitely-presented, then $\tau_{\cl}(R)=\sum_{f \in \Hom_R(B,R)} f(B)$. (Theorem \ref{thm: test = int})
\end{enumerate}
\end{theorem}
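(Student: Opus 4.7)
My plan is to prove the two parts in sequence, using (1) as a lemma for (2). For Part (1), unpack the definition $\tau_{\cl}(R) = \bigcap (N :_R N_M^{\cl})$, intersection over the relevant class of pairs $(N, M)$: specializing to $N = 0 \subseteq M = E$ immediately gives $\tau_{\cl}(R) \subseteq \Ann_R(0_E^{\cl})$. For the reverse, I would invoke the residual axiom to detect $N_M^{\cl}$-membership via maps into products of $E$: any $u \in M \setminus N_M^{\cl}$ is witnessed by some $\phi: M \to E^{I}$ with $\phi(N) = 0$ and $\phi(u) \notin 0_{E^I}^{\cl}$. Then if $c$ annihilates $0_E^{\cl}$, it annihilates $0_{E^I}^{\cl}$ coordinatewise, so pulling back along $\phi$ forces $cu \in N$. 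The substance of Part (1) is just verifying this Matlis-type compatibility from the residuality axioms.

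For Part (2), begin from (1):
\[
\tau_{\cl_B}(R) = \Ann_R(0_E^{\cl_B}) = \Ann_R\bigl(\Ker(E \xrightarrow{e \mapsto e \otimes 1_B} E \otimes_R B)\bigr).
\]
The inclusion $\sum_f f(B) \subseteq \tau_{\cl_B}(R)$ is direct: for $e \in \Ker(E \to E \otimes_R B)$ and $f \in \Hom_R(B, R)$, applying $\id_E \otimes f$ to the identity $e \otimes 1_B = 0$ yields $f(1_B) \cdot e = 0$; since each $f(b)$ can be rewritten as $f'(1_B)$ for some $f' \in \Hom_R(B, R)$ (when $B$ is an $R$-algebra, via precomposition with left multiplication by $b$), the entire ideal $\sum_f f(B)$ annihilates $e$.

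For the reverse inclusion, I would handle the two hypotheses separately. When $R$ is complete, apply Matlis duality $D = \Hom_R(-, E)$ to the sequence $0 \to K \to E \to E \otimes_R B$, where $K = 0_E^{\cl_B}$. By hom-tensor adjunction $D(E \otimes_R B) = \Hom_R(B, \Hom_R(E, E)) = \Hom_R(B, R)$, so the dualized sequence reads $\Hom_R(B, R) \to R \to D(K) \to 0$ with first arrow $f \mapsto f(1_B)$. Thus $D(K) = R / \sum_f f(B)$, and since $K$ is Artinian, $\Ann_R K = \Ann_R D(K) = \sum_f f(B)$. When $B$ is finitely presented, a presentation $R^m \to R^n \to B \to 0$ induces $E^m \to E^n \to E \otimes_R B \to 0$; one then reads $K$ and its annihilator off the matrices in the presentation, yielding the same trace ideal without invoking completion.

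The hard part, I expect, is the reverse inclusion in (2) under the finitely-presented hypothesis, where the adjunction $D(E \otimes_R B) = \Hom_R(B, R)$ used in the complete case is unavailable. The finite presentation of $B$ should let me circumvent this by an explicit matrix computation reading $\Ker(E \to E \otimes_R B)$ from the presentation, but the careful identification of its annihilator with $\sum_f f(B)$ from the dualized presentation will require bookkeeping.
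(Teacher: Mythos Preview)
Your approach to Part~(1) via embedding $M/N$ into a product $E^I$ is workable and genuinely different from the paper's argument, which instead picks $x \in 0_M^{\cl}$ with $cx \neq 0$, chooses $N \subseteq M$ maximal with $cx \notin N$, and observes that $M/N$ is then an essential extension of $k$ and so embeds in a single copy of $E$. Your version trades the maximality trick for the fact that $E$ is an injective cogenerator. However, the logic as you wrote it is inverted: you claim that $u \in M \setminus N_M^{\cl}$ is witnessed by $\phi$ with $\phi(u) \notin 0_{E^I}^{\cl}$, but that direction is neither what you need nor obviously available from the axioms. What you actually need is the easy direction: for $u \in N_M^{\cl}$, residuality gives $\bar u \in 0_{M/N}^{\cl}$, functoriality pushes this into $0_{E^I}^{\cl}$ along any fixed embedding $M/N \hookrightarrow E^I$, and then $c$ kills it coordinatewise, forcing $c\bar u = 0$. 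Rewrite the paragraph with the quantifiers the right way around.

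Part~(2) has a real gap: your entire argument is written for $B$ an $R$-algebra, but the statement is for $B$ an arbitrary $R$-module. You write $0_E^{\cl_B} = \Ker(E \to E \otimes_R B,\ e \mapsto e \otimes 1_B)$, but there is no $1_B$ in a module; by definition
\[
0_E^{\cl_B} \;=\; \bigcap_{b \in B} \Ker\bigl(E \to B \otimes_R E,\ e \mapsto b \otimes e\bigr),
\]
which is not the kernel of a single map. Your ``easy'' inclusion $\sum_f f(B) \subseteq \tau_{\cl_B}(R)$ survives once you replace $e \otimes 1_B = 0$ by $b \otimes e = 0$ for all $b$ and apply $f \otimes \id_E$, but your Matlis-duality argument for the reverse inclusion does not: dualizing a single map $E \to E \otimes_R B$ is no longer available. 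The paper fixes this by invoking the fact that $E$ is Artinian to reduce the infinite intersection to finitely many $b_1,\ldots,b_n$, then dualizes the resulting map $\phi: E \to (B \otimes_R E)^{\oplus n}$. This is exactly the step you are missing, and without it neither the complete case nor the finitely-presented case goes through for modules. The finitely-presented hypothesis enters not to compute $\Ker(E \to E \otimes B)$ from a presentation of $B$ as you suggest, but rather to commute $\Hom_R(B,-)$ with the flat base change $R \to \hat R$ after dualizing.
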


In particular, the second result is similar to the result that the tight closure test ideal \[\tau_*(R)=\sum_{e \ge 0} \sum_{\phi \in \Hom_R(R^{1/p^e},R)} \phi((cR)^{1/p^e})\]
for particular elements $c$ \cite{HaraTakagi}. This perspective on the tight closure test ideal is one of the major tools used to study it, as described in \cite{SurveyTestIdeals}. Our second definition also coincides with the trace ideal of the module $B$, as studied in \cite{Lam,LindoHaydeeTraceIdealsAndCentersOfEndomorphismRingsOfModulesOverCommutativeRings}.
By drawing this connection, we open the door for future results on test ideals using the theory of trace ideals, and vice versa. In an upcoming paper with Neil Epstein \cite{epsteinrg}, the second named author has generalized this to a duality between closure operations and interior operations on \fg\ and Artinian modules over complete local rings.

One important consequence of these results is that when the ring is complete and cl is a big \CM\ module closure, $\tau_{\cl}(R)$ is nonzero (Corollary \ref{solidhasnontrivialtestideal}).

We also define a finitistic test ideal of an arbitrary closure operation and discuss cases where it is equal to the (big) test ideal of the same closure operation. In the Gorenstein case, the test ideal of an algebra closure is the whole ring if and only if the corresponding finitistic test ideal is also the whole ring (Proposition \ref{weaklyimpliesstrongly}).

One advantage to working with test ideals of module closures is that, as a consequence of Theorem \ref{thm: test = int}, when the module is \fg, we can compute its test ideal in Macaulay2. This is in contrast to the tight closure test ideal, which is difficult to compute in general. In Section \ref{examples}, we compute examples of test ideals of \fg\ \CM\ modules, and in some cases are able to compute or approximate the ``smallest" \CM\ test ideal.

In summary, our results on the classification of singularities via test ideals are: 

\begin{theorem}
Assume that $R$ is a complete local domain.
\begin{enumerate}
\item $R$ is regular if and only if $\tau_{\cl_B}(R)=R$ for all big \CM\ $R$-modules $B$. (Corollary \ref{testidealtrivialonregular})
\item If $R$ has \charp, then $R$ is weakly F-regular if and only if the finitistic test ideal $\tau_{\cl_B}^{fg}(R)=R$ for all big \CM\ algebras $B$. (Corollary \ref{bigcmalgebratestideals})
\item If $\tau_{\cl_B}(R)=R$ for some big \CM\ module $B$, then $R$ is \CM. (Corollary \ref{onecmtrivialimpliescmtestideal})
\item If $R$ is a \CM\ ring with a canonical module $\omega$, $R$ is Gorenstein if and only if $\tau_{\cl_\omega}(R)=R$. (Corollary \ref{canonicalmodule})
\item If $B$ is a \fg\ \CM\ module, then $V(\tau_{\cl_B}(R)) \subseteq \text{Sing}(R)$. (Corollary \ref{Prop:Singular contains V(tau)})
\item If $R$ has finite \CM\ type but is not regular, then $\tau_{\cl_B}(R)$ is $m$-primary for all \fg\ \CM\ modules $B$. (Proposition \ref{finitecmtypemprimary})
\item If $R$ has countable \CM\ type but is not regular, then $\tau_{\cl_B}(R)$ may not be $m$-primary, even if $B$ is a \fg\ \CM\ module. (Example \ref{whitneyumbrella})
\end{enumerate}
\end{theorem}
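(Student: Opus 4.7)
My strategy is to derive the seven items from the two descriptions of $\tau_{\cl}(R)$ established in the previous theorem: $\tau_{\cl}(R)=\Ann_R 0_E^{\cl}$ in the residual case, and $\tau_{\cl_B}(R)=\sum_{f\in\Hom_R(B,R)} f(B)$ (the trace ideal of $B$) in the module case. I would combine these with the second author's characterization of regular rings via triviality of all Dietz closures, with the theory of trace ideals developed by Lindo, and with the Huneke--Leuschke theorem on finite \CM\ type. Throughout I would also invoke Corollary \ref{solidhasnontrivialtestideal}: over a complete local domain $R$, every big \CM\ module closure has $\tau_{\cl_B}(R)\ne 0$.

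\textbf{Items (1), (3), (4).} For (1), big \CM\ module closures are Dietz, and every Dietz closure is trivial on \fg\ modules over a regular ring, so $0_E^{\cl_B}=0$ and $\tau_{\cl_B}(R)=R$; the converse uses that for a non-regular $R$, the second author's earlier work produces a big \CM\ module $B$ with nontrivial $\cl_B$-action on some \fg\ module, giving $\tau_{\cl_B}(R)\subsetneq R$. For (3), $\tau_{\cl_B}(R)=R$ yields $1=\sum f_i(b_i)$ for finitely many $f_i\colon B\to R$, i.e.\ a split surjection $B^n\twoheadrightarrow R$; since $B^n$ is big \CM, $R$ is a direct summand of a big \CM\ module, and the standard depth-on-parameters argument shows $R$ is \CM. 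For (4), the trace ideal of the canonical module cuts out the non-Gorenstein locus (Lindo), so in the complete local \CM\ setting it equals $R$ exactly when $R$ is Gorenstein.

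\textbf{Items (5), (6), (7).} For (5), the trace ideal commutes with localization when $B$ is \fg, so $\tau_{\cl_B}(R)_{\p}$ is the trace of $B_{\p}$ in $R_{\p}$; for $\p\notin \text{Sing}(R)$ the module $B_{\p}$ is \fg\ \CM\ over the regular local ring $R_{\p}$ and hence free, so its trace is $R_{\p}$ and $\tau_{\cl_B}(R)\not\subseteq \p$. For (6), the Huneke--Leuschke theorem gives that a complete local domain of finite \CM\ type has an isolated singularity, so $\text{Sing}(R)\subseteq\{m\}$; combined with (5) and with $\tau_{\cl_B}(R)\ne 0$, this forces $\tau_{\cl_B}(R)$ to be $m$-primary (or the unit ideal, e.g.\ for $B=R$). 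For (7), I would exhibit an explicit Whitney-umbrella-type hypersurface of countable but not finite \CM\ type and produce a \fg\ \CM\ module whose trace ideal is a nonmaximal prime, computed directly from $\tau_{\cl_B}(R)=\sum f(B)$.

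\textbf{Item (2) and main obstacle.} Part (2) is the subtlest step because it requires relating our finitistic test ideal for a big \CM\ algebra closure to classical tight closure. In \charp\ over a complete local domain, every tight closure relation on a submodule of a \fg\ module is witnessed by some big \CM\ algebra (e.g.\ $R^+$), while conversely any big \CM\ algebra closure $\cl_B$ is contained in tight closure on \fg\ modules. Weak F-regularity is the vanishing of tight closure on all ideals, equivalently the vanishing of $0_E^{\cl_B}$ finitistically for every big \CM\ algebra $B$, which via the finitistic analogue of $\tau=\Ann 0_E^{\cl}$ translates into $\tau_{\cl_B}^{fg}(R)=R$ for all such $B$. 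The main obstacle is keeping track of the two containments between tight closure and $\cl_B$-closure for each direction of the biconditional, and making precise in what sense the family of all big \CM\ algebras jointly recovers tight closure.
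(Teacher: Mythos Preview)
Your overall plan matches the paper's: each item is deduced from the two descriptions $\tau_{\cl}(R)=\Ann_R 0_E^{\cl}$ and $\tau_{\cl_B}(R)=\intr_B(R)$, together with the second author's regularity criterion, Hochster's identification of tight closure with big \CM\ algebra closures on \fg\ modules, and Auslander's isolated-singularity theorem. Two points deserve correction, and one item uses a genuinely different argument.

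In (1) you write that Dietz closures are trivial on \fg\ modules over a regular ring, ``so $0_E^{\cl_B}=0$''. That inference fails as stated: $E$ is not finitely generated. The paper closes this gap by invoking the stronger fact (Hochster--Huneke) that a big \CM\ module over a regular local ring is faithfully flat, hence $\cl_B$ is trivial on \emph{all} modules, not just \fg\ ones; this is what yields $0_E^{\cl_B}=0$. In (6) the isolated-singularity theorem you need is Auslander's, not Huneke--Leuschke (the latter concerns countable \CM\ type). Also, in (2) your parenthetical ``e.g.\ $R^+$'' overstates what is used: the input is Hochster's theorem that every tight-closure relation on \fg\ modules is witnessed by \emph{some} big \CM\ algebra, not that $R^+$ alone witnesses all of them.

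Your argument for (3) is different from the paper's and is a nice alternative. The paper simply observes that $\tau_{\cl_B}(R)=R$ forces $\cl_B$ to be trivial on ideals, and then colon-capturing for $\cl_B$ immediately gives $(x_1,\dots,x_k):x_{k+1}\subseteq (x_1,\dots,x_k)^{\cl_B}=(x_1,\dots,x_k)$. Your route---using $1=\sum f_i(b_i)$ to split $R$ off $B^n$ and then transferring the regular-sequence property along the retraction---also works and has the virtue of producing an explicit \CM\ witness for $R$. For (5) you localize and argue $B_\p$ is free over the regular ring $R_\p$; the paper instead uses that $B_\p$ is big \CM\ over $R_\p$ and hence faithfully flat, which sidesteps checking that $B_\p$ is maximal \CM\ over $R_\p$. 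Either route is fine once one notes $B_\p\ne 0$ (since $B$ is faithful over the domain $R$).
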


 We apply our techniques to the case of mixed characteristic rings in Section \ref{mixedchar}. We propose a mixed characteristic closure operation that satisfies Dietz's axioms (these guarantee that it acts like a big \CM\ module closure--see \cite{DietzGeoffreyD_A_characterization_of_closure_operations_that_induce_BCM_modules,RGRebecca_Closures_Big_CM_modules_and_singularities} for details), and prove that its test ideal can be viewed in three different ways similar to those we gave for module closures earlier. In addition to demonstrating how our results can be used in mixed characteristic, this section shows how our proof techniques can be applied to a broader group of closures than module closures.
 
Throughout the paper, $R$ will denote a commutative Noetherian ring, though some of the $R$-algebras under consideration will not be Noetherian.

\section{Preliminaries} 
\label{preliminaries}

In this section we recall the concepts of closure operations and trace ideals. We record their basic properties for later use and give the appropriate references for their proofs. 

\subsection{Closure Operations}

Given a submodule $N$ of a module $M$, we would like to find a submodule of $M$ containing $N$ that also satisfies some desired properties. This idea is encoded in the following familiar definition. 

\begin{definition} A \emph{closure operation} $\cl$ on a ring $R$ is a map, which to each pair of modules $N\subseteq M$ assigns a submodule $N_M^{\cl} $ of  $M$ satisfying.

\begin{itemize}
\item (Extension) $N\subseteq N_M^{\cl}$,
\item (Idempotence) $(N_M^{\cl})^{\cl}_M=N_M^{\cl}$, and
\item (Order-Preservation) $N_M^{\cl}\subseteq {N'}_M^{\cl}$, for $R$-modules $N\subseteq N'\subseteq M$.
\end{itemize}  

\end{definition}

A particularly  important family of closures are Dietz closures, originally defined in \cite{DietzGeoffreyD_A_characterization_of_closure_operations_that_induce_BCM_modules, dietzbigaxioms}. A local domain has a Dietz closure if and only if it has a big \CM\ module \cite{DietzGeoffreyD_A_characterization_of_closure_operations_that_induce_BCM_modules}.

\begin{definition}
Let $(R,\m)$ be a local domain and $N,M,$ and $W$ be 
$R$-modules with $N\subseteq M$. A closure operation $\cl$ is called a \emph{Dietz closure} if it satisfies the following extra axioms:
\begin{enumerate}

\item (Functoriality) Let $f:M\to W$ be a homomorphism. Then $f(N^{\cl}_M)\subseteq f(N)_W^{\cl}$.
\item (Semi-residuality) If $N^{\cl}_M=N$, then $0^{\cl}_{M/N}=0$.
\item (Faithfulness) The ideal $\m$ is closed in $R$.
\item (Generalized Colon-Capturing) Let $x_1,\ldots,x_{k+1}$ be a partial system of parameters for $R$, and let $J=(x_1,\ldots,x_k)$. Suppose that there exists a surjective homomorphism $f:M\to R/J$ and $v\in M$ such that $f(v)=x_{k+1}+J$. Then $(Rv)^{\cl}_M \cap \ker f \subseteq (Jv)^{\cl}_M$.
\end{enumerate}
Note that these axioms are independent of each other, and an arbitrary closure operation on any ring $R$ can satisfy some subset of them.

\end{definition}

\begin{remark}
The careful reader will note that the axioms, as expressed here, are set in a more general setting than in \cite{DietzGeoffreyD_A_characterization_of_closure_operations_that_induce_BCM_modules}. In  \cite{DietzGeoffreyD_A_characterization_of_closure_operations_that_induce_BCM_modules} the axioms were defined only for complete rings, but this hypothesis was not needed. They were also defined only for finitely-generated modules in \cite{DietzGeoffreyD_A_characterization_of_closure_operations_that_induce_BCM_modules}, but the definitions were later used for arbitrary modules in \cite{dietzbigaxioms}.
\end{remark}

Associated to any $R$-module $B$ we define a closure operation as follows. 

\begin{definition}
Given an $R$-module $B$ (not necessarily \fg), we define a closure operation $\cl_B$ on $R$ by \[u\in N_M^{\cl_B}\text{ if for all }b\in B,\, b\otimes u \in \IM(B\otimes N \to B\otimes M)\] for any pair of $R$-modules $N \subseteq M$ and $u\in M$. This is called a \emph{module closure}.  
\end{definition}

When $B$ is an $R$-algebra, the previous definition can be simplified to $u \in N_M^{\cl_B}$ if and only if \[1\otimes u \in \IM(B\otimes N \to B\otimes M).\]

\begin{remark} \label{rmk: closure for family} We can extend this closure operation to families of modules in certain circumstances. Let $\cal{B}=\{B_i\}_{i\in I}$ be a collection of $R$-modules. We define $\cl_{\cal{B}} = \sum \cl_{B_i}$. This is not in general a closure operation (it is not necessarily idempotent), but since the ring is Noetherian, it can be extended to one by iteration as in \cite[Construction 3.1.5]{Eps}. Alternatively, if the family is directed under generation  (see Definition \ref{generate}), then $\cl_{\cal{B}}$ does form a closure operation. In particular, if the $B_i$ are $R$-algebras that form a directed family, then $\cl_{\cal{B}}$ is a closure operation.

\end{remark}

\begin{definition}[{\cite{hochster1975cbms}}]
Let $(R,m)$ be a local ring. We say that an $R$-module $B$ (not necessarily \fg) is a \textit{big Cohen-Macaulay $R$-module} if $mB \ne B$ and every \sop\ on $R$ is a regular sequence on $B$. Note that these modules are sometimes referred to as balanced big \CM\ $R$-modules (see for example \cite{sharpcmprops}).
\end{definition}

\begin{theorem}[{\cite{DietzGeoffreyD_A_characterization_of_closure_operations_that_induce_BCM_modules}}]
If $B$ is a big \CM\ module, then $\cl_B$ is a Dietz closure.
\end{theorem}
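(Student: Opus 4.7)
The plan is to verify each of the four Dietz axioms in turn; the basic closure axioms (Extension, Order-Preservation, Idempotence) are routine, with idempotence following from the observation that if $b\otimes u = \sum_j b_j \otimes n_j$ with $n_j \in N_M^{\cl_B}$, then each individual tensor $b_j \otimes n_j$ already lies in $\IM(B \otimes N \to B \otimes M)$ (by the defining property of $n_j$ applied to the particular element $b_j$), so the whole sum does.

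Functoriality follows from naturality: if $f \colon M \to W$ and $u \in N_M^{\cl_B}$, then $b \otimes f(u) = (1_B \otimes f)(b \otimes u) \in \IM(B \otimes f(N) \to B \otimes W)$ for every $b \in B$. Semi-residuality uses right exactness of $B \otimes -$: if $\bar u \in 0_{M/N}^{\cl_B}$, then $b \otimes \bar u = 0$ in $B \otimes (M/N)$ for all $b$, whence $b \otimes u \in \IM(B \otimes N \to B \otimes M)$ and so $u \in N_M^{\cl_B} = N$. Faithfulness uses the ``big'' part of big \CM: any $u \in \m_R^{\cl_B}$ satisfies $bu \in \m B$ for every $b \in B$ (via $B \otimes_R R \cong B$), so $uB \subseteq \m B$; were $u$ a unit this would force $B = \m B$, a contradiction.

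The substantive axiom is Generalized Colon-Capturing, which invokes the regular-sequence part of the hypothesis. Given $x_1,\ldots,x_{k+1}$ a partial \sop, $J = (x_1,\ldots,x_k)$, $f \colon M \twoheadrightarrow R/J$ with $f(v) = x_{k+1}+J$, and $u \in (Rv)_M^{\cl_B} \cap \ker f$, the cyclicity of $Rv$ ensures that for each $b \in B$ we may write $b \otimes u = c_b \otimes v$ in $B \otimes M$ for some $c_b \in B$ (since every element of $B \otimes Rv$ has the form $c \otimes v$). Applying $1_B \otimes f$ and using $f(u)=0$ gives $c_b \otimes (x_{k+1}+J) = 0$ in $B \otimes R/J \cong B/JB$, i.e.\ $c_b x_{k+1} \in JB$. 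Because a partial \sop\ extends to a full \sop, which acts as a regular sequence on the big \CM\ module $B$, one concludes $c_b \in JB$. Writing $c_b = \sum_{i=1}^k x_i d_{b,i}$ and pushing the scalars across the tensor yields $b \otimes u = \sum_i d_{b,i} \otimes x_i v \in \IM(B \otimes Jv \to B \otimes M)$, so $u \in (Jv)_M^{\cl_B}$.

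The only real obstacle is the Colon-Capturing step, specifically recognizing that cyclicity of $Rv$ permits the reduction $b \otimes u = c_b \otimes v$, which collapses the verification to the single equation $c_b x_{k+1} \in JB$ where the regular-sequence property on $B$ can be applied. The remaining axioms are formal consequences of tensor-product manipulations, and altogether the ``big'' and ``\CM'' conditions on $B$ correspond cleanly to Faithfulness and Generalized Colon-Capturing, respectively.
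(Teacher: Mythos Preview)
Your argument is correct and follows the standard approach. The paper does not give its own proof of this statement---it is cited from Dietz---but your verification of the axioms matches exactly the strategy the paper itself uses in Section~\ref{mixedchar} when proving that the almost-big-\CM-algebra closure is a Dietz closure (compare your Generalized Colon-Capturing step with the corresponding paragraph there). The key manoeuvre of exploiting cyclicity of $Rv$ to write $b\otimes u = c_b\otimes v$, then applying $1_B\otimes f$ and invoking regularity of the \sop\ on $B$, is precisely the expected one.
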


\begin{lemma}[{\cite[Lemma 3.2]{RGRebecca_Closures_Big_CM_modules_and_singularities}}]
\label{residuallemma}
Let $R$ be any ring and $B$ any $R$-module (not necessarily \fg). Then $\cl_B$ satisfies the first two axioms of a Dietz closure, i.e., $\cl_B$ is functorial and semi-residual.
\end{lemma}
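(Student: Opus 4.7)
The plan is to show both axioms are immediate consequences of standard properties of the tensor product, specifically naturality (for functoriality) and right-exactness (for semi-residuality). Crucially, no finite generation or other hypothesis on $B$ is needed, so the proof goes through for arbitrary $R$-modules.

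For functoriality, I would take $u\in N_M^{\cl_B}$ and a homomorphism $f\colon M\to W$, and aim to show $f(u)\in f(N)_W^{\cl_B}$. Fix $b\in B$. By definition of $\cl_B$, the element $b\otimes u\in B\otimes M$ lies in the image of $B\otimes N\to B\otimes M$, so we may write $b\otimes u$ as the image of some $\xi\in B\otimes N$. Applying the natural map $\id_B\otimes f\colon B\otimes M\to B\otimes W$ and using commutativity of the square
\[
\begin{CD}
B\otimes N @>>> B\otimes M\\
@VV{\id_B\otimes f|_N}V @VV{\id_B\otimes f}V\\
B\otimes f(N) @>>> B\otimes W
\end{CD}
\]
shows that $b\otimes f(u)$ is the image of $(\id_B\otimes f|_N)(\xi)\in B\otimes f(N)$ under $B\otimes f(N)\to B\otimes W$. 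Since $b$ was arbitrary, $f(u)\in f(N)_W^{\cl_B}$.

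For semi-residuality, assume $N_M^{\cl_B}=N$ and take $\bar u\in 0_{M/N}^{\cl_B}$ with lift $u\in M$. I need to conclude $u\in N$, and by the closedness hypothesis it suffices to show $u\in N_M^{\cl_B}$. Fix $b\in B$. The hypothesis gives $b\otimes \bar u=0$ in $B\otimes(M/N)$, so $b\otimes u$ lies in the kernel of the surjection $B\otimes M\to B\otimes(M/N)$. By right-exactness of $B\otimes -$ applied to $0\to N\to M\to M/N\to 0$, this kernel equals the image of $B\otimes N\to B\otimes M$. Hence $b\otimes u$ lies in that image, and since $b$ was arbitrary, $u\in N_M^{\cl_B}=N$, giving $\bar u=0$.

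The argument is essentially bookkeeping about tensor products, so there is no real obstacle; the only thing worth flagging is that the maps $B\otimes N\to B\otimes M$ and $B\otimes f(N)\to B\otimes W$ need not be injective, so one must phrase everything in terms of images and kernels rather than identifying $B\otimes N$ with a submodule of $B\otimes M$. Once that care is taken, both axioms reduce to one diagram chase each.
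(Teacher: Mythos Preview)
Your proof is correct. The paper itself does not supply a proof of this lemma; it simply cites \cite[Lemma 3.2]{RGRebecca_Closures_Big_CM_modules_and_singularities}, and your argument is exactly the standard one that appears there: functoriality via naturality of the tensor product and semi-residuality via right-exactness of $B\otimes_R -$.
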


\begin{remark} Note that when $M=R$ and $N=I=(f_1,\ldots,f_n) \subseteq R$ is an ideal we have $u\in I^{\cl_B}_R$ if and only if $uB\subseteq IB$. That is, the closure of of an ideal is the collection of all elements that multiply $B$ into $IB$, or equivalently 
\[I^{\cl_B}_R=(IB :_R B).\]
Alternatively, we can write $I^{\cl_B}_R$ as the set of elements $u$ of $R$ for which the equation
\[ ub=f_1X_1+\ldots+f_nX_n \] 
has a solution $(X_1,\ldots,X_n)$ in $B^{\oplus n}$ for every $b\in B$.  Or in the case that $B$ is an $R$-algebra, it is enough to check that 
\[ u 1_B=f_1X_1+\ldots+f_nX_n \] 
has a solution.

We will sometimes write $I^{\cl_B}$ when $R$ is clear from context.
\end{remark}

The following examples show that familiar ideals and closure operations are particular examples of module closures.

\begin{example} Suppose that  $B=R/J$, then we have that $I^{\cl_B}=I+J$.
\end{example}

\begin{example}
If $B=R_f$ for some 
$f\in R$, then for an ideal $I\subseteq R$, $u\in I^{\cl_B}$ if $uR_f\subseteq IR_f$ or equivalently $u\in (I:f^{\infty})$.
\end{example}

\begin{example}
If $R$ is a domain of \charp\ and $B=R^{1/{p^e}}$ for some $e>0$, then for an ideal $I\subseteq R$, $u\in I^{\cl_B}$ if $uR^{1/{p^e}}\subseteq IR^{1/{p^e}}$ or equivalently $u^{p^e}\in I^{[p^e]}$.

If instead $B=R^{1/p^{\infty}}$, then for an ideal $I\subseteq R$, $u\in I^{\cl_B}$ if $uR^{1/{p^{\infty}}}\subseteq IR^{1/{p^{\infty}}}$ which in turn is equivalent to $uR^{1/{p^e}}\subseteq IR^{1/{p^e}}$ for some $e>0$, that is $u^{p^e}\in I^{[p^e]}$ for some $e>0$. This is known as \emph{Frobenius Closure}.
\end{example}

\begin{example}
Suppose that $R$ is an integral domain. The plus closure of $N$ in $M$, denoted $N_M^+$, is the module closure $\cl_{R^+}$, where $R^+$ is the absolute integral closure of $R$ \cite{HochsterHunekeInfiniteIntegralExtensions,SmithParameter,fndtc} (for the extension to modules, see \cite[Remark 7.0.6]{Eps}).
\end{example}

For reference, we list some properties of closure operations and refer the reader to \cite[Lemma 3.1]{RGRebecca_Closures_Big_CM_modules_and_singularities}, \cite[Lemma 1.2]{DietzGeoffreyD_A_characterization_of_closure_operations_that_induce_BCM_modules}, and \cite[Lemma 1.3]{dietzbigaxioms} for the proofs.

\begin{proposition} \label{Closure_properties}
Let $R$ be a ring possessing a closure operation $\cl$. In the following, $N$ and $N'$ are $R$-submodules of the $R$-module $M$, $\mathcal{I}$ is a set, and $N_i \subseteq M_i$ for $i \in \mathcal{I}$ are $R$-modules.

\begin{enumerate}[label=(\alph*)]

\item Suppose that $\cl$ satisfies the functoriality axiom and the semi-residuality axiom. Let $N'\subseteq N\subseteq M$ and $u \in M$. Then $u\in N_M^{\cl}$ if and only if $u+N'\in \left(N/N'\right) _{M/N'}^{\cl}$. 

\item Suppose that 	$\cl$ satisfies the functoriality axiom, $\mathcal{I}$ is any set, $N=\bigoplus_{i\in \mathcal{I}} N_i$, and $M=\bigoplus_{i\in \mathcal{I}} M_i$. Then $N_M^{\cl} = \bigoplus_{i\in \mathcal{I}} (N_i)^{\cl}_{M_i}$.

\item Let $\mathcal{I}$ be any set. If $N_i\subseteq M$ for all $i\in \mathcal{I}$, then $\left( \bigcap_{i \in \mathcal{I}} N_i \right)^{\cl}_M \subseteq \bigcap_{i \in \mathcal{I}} \left(N_i\right)_{M}^{\cl}$.

\item Let $\mathcal{I}$ be any set. If $N_i \subseteq M$ is $\cl$-closed in $M$ for all $i\in \mathcal{I}$, then $\bigcap_{i\in \mathcal{I}}N_i$ is $\cl$-closed in $M$.

\item If $N_1,N_2 \subseteq M$, then $(N_1+N_2)_M^{\cl}=((N_1)_M^{\cl}+(N_2)_M^{\cl})_M^{\cl}$.

\item Suppose that $\cl$ satisfies the functoriality axiom. Let $N\subseteq N' \subseteq M$. Then $N_{N'}^{\cl} \subseteq N_M^{\cl}$.

\item Suppose that $R$ is a domain, $\cl$ satisfies the functoriality axiom, $0_R^{\cl}=0$, and $M$ is a torsion-free \fg\ $R$-module. Then $0_M^{\cl}=0$.

\item Suppose that $(R,\m)$ is local and $\cl$ satisfies the functoriality axiom, the semi-residuality axiom, and the faithfulness axiom. Then, for M a \fg\ $R$-module, and $N\subset M$, $N_M^{\cl}\subseteq N +\m M$. 

\end{enumerate}

\end{proposition}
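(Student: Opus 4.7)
The plan is to derive each item by manipulating the three closure axioms plus whichever Dietz axioms are hypothesized, applying functoriality to carefully chosen homomorphisms (inclusions, projections, or quotients). The most delicate item is (a), the correspondence between relative closure in $M$ and in $M/N'$. The forward implication is immediate from functoriality applied to the quotient $M \to M/N'$, which sends $N$ to $N/N'$. For the converse, apply functoriality to the map $M/N' \to M/N^{\cl}_M$ (well-defined since $N' \subseteq N \subseteq N^{\cl}_M$), which sends $N/N'$ to zero. This places $(N/N')^{\cl}_{M/N'}$ inside $0^{\cl}_{M/N^{\cl}_M}$; since $N^{\cl}_M$ is $\cl$-closed in $M$ by idempotence, semi-residuality makes this target vanish, forcing $u \in N^{\cl}_M$.

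Items (b)--(f) are essentially formal. For (b), the inclusions $M_i \hookrightarrow M$ and projections $M \twoheadrightarrow M_j$, combined with functoriality, sandwich $N^{\cl}_M$ between $\bigoplus_{i}(N_i)^{\cl}_{M_i}$ and itself. Parts (c) and (d) follow directly from order-preservation applied to $\bigcap_i N_i \subseteq N_j$, with (d) a corollary of (c). For (e), order-preservation gives $N_i^{\cl} \subseteq (N_1+N_2)^{\cl}$, so $((N_1)_M^{\cl} + (N_2)_M^{\cl})_M^{\cl} \subseteq ((N_1+N_2)_M^{\cl})_M^{\cl} = (N_1+N_2)_M^{\cl}$ by idempotence, and the reverse inclusion is immediate from order-preservation. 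For (f), apply functoriality to the inclusion $N' \hookrightarrow M$.

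For (g), use that a finitely generated torsion-free module over a domain embeds in $R^n$; functoriality sends $0^{\cl}_M$ into $0^{\cl}_{R^n}$, which by (b) and the hypothesis $0^{\cl}_R = 0$ vanishes, and injectivity of the embedding finishes the argument. For (h), apply functoriality to the quotient $M \twoheadrightarrow M/(N+\m M)$ to place the image of $N^{\cl}_M$ inside $0^{\cl}_{M/(N+\m M)}$; since $M/(N+\m M)$ is a finite-dimensional $\k$-vector space (where $\k = R/\m$), it is isomorphic as an $R$-module to $\k^s$, so by (b) it suffices to show $0^{\cl}_{\k} = 0$. But this is exactly semi-residuality applied to the faithfulness hypothesis that $\m$ is $\cl$-closed in $R$. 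The main conceptual hurdle is the converse direction of (a): one must pass to the quotient by $N^{\cl}_M$ rather than by $N$ itself, so that idempotence and semi-residuality can be combined; everywhere else the proof is a diagram chase from order-preservation and functoriality.
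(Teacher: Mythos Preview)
The paper does not actually prove this proposition; it merely cites \cite[Lemma~3.1]{RGRebecca_Closures_Big_CM_modules_and_singularities}, \cite[Lemma~1.2]{DietzGeoffreyD_A_characterization_of_closure_operations_that_induce_BCM_modules}, and \cite[Lemma~1.3]{dietzbigaxioms} for the arguments, so there is no in-text proof to compare against. Your proposal is correct and follows the standard line of reasoning one finds in those references: functoriality applied to inclusions, projections, and quotient maps, together with order-preservation and idempotence, handles everything. In particular, your treatment of the converse in (a)---quotienting by $N_M^{\cl}$ rather than by $N$, so that idempotence forces the target to be $\cl$-closed and semi-residuality kills $0^{\cl}$---is exactly the right maneuver, and your reduction of (h) to $0_{\k}^{\cl}=0$ via (b) and semi-residuality applied to faithfulness is clean.
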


When the closure operation satisfies the functoriality and semi-residuality axioms,  the elements of the ring multiplying the closure inside the original module can be seen as an annihilator. More precisely:

\begin{lemma}\label{lemma: colon to annihilators}
Let $\cl$ be a closure operation that is functorial and semi-residual. Then for any $R$-module $M$ and any $R$-submodule $N$ of $M$, we have that $\left(N:_R N^{\cl}_M\right)=\Ann_R\left(0^{\cl}_{M/N}\right)$. In particular, this holds for module closures. 
\end{lemma}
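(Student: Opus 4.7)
The plan is to reduce the identity to Proposition \ref{Closure_properties}(a). Applying that proposition with $N' = N$, the hypotheses of functoriality and semi-residuality give, for any $u \in M$, the equivalence
\[u \in N_M^{\cl} \iff u + N \in 0_{M/N}^{\cl}.\]
In other words, under the canonical surjection $\pi: M \to M/N$, the image of $N_M^{\cl}$ is exactly $0_{M/N}^{\cl}$, and $N_M^{\cl} = \pi^{-1}(0_{M/N}^{\cl})$.

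From there the equality is essentially a formal manipulation. For $r \in R$ I would chase the definitions: $r \in (N :_R N_M^{\cl})$ means $r N_M^{\cl} \subseteq N$, which is the same as saying that for every $u \in N_M^{\cl}$ we have $ru + N = 0$ in $M/N$. By the correspondence above, the elements $u + N$ with $u \in N_M^{\cl}$ range exactly over $0_{M/N}^{\cl}$, so this condition is equivalent to $r \cdot 0_{M/N}^{\cl} = 0$, i.e.\ $r \in \Ann_R(0_{M/N}^{\cl})$. For the final sentence I would just recall Lemma \ref{residuallemma}, which says that every module closure $\cl_B$ is functorial and semi-residual, so the statement applies to module closures.

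There is no real obstacle here: the content of the lemma is entirely packaged inside the equality $\pi(N_M^{\cl}) = 0_{M/N}^{\cl}$, which is itself Proposition \ref{Closure_properties}(a). The only care needed is to cite that proposition correctly and to note that the two axioms in its hypothesis match the ones assumed in the lemma.
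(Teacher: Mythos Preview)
Your proposal is correct and follows essentially the same approach as the paper: both invoke Proposition~\ref{Closure_properties}(a) to identify $N_M^{\cl}/N$ with $0_{M/N}^{\cl}$ and then observe that the colon and annihilator statements are formally equivalent. The paper's proof is terser, simply declaring the result ``clear'' after citing the proposition, while you spell out the element chase explicitly.
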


\begin{proof}
It is enough to prove that $\left(N:N^{\cl}_M\right)=\left(0:0^{\cl}_{M/N}\right)$. Now part (a) of Proposition \ref{Closure_properties} implies $N^{\cl}_M/N = 0^{\cl}_{M/N}$, from where the result is clear.

\end{proof}

The following proposition gives information about the behavior of module closures under ring extension.

\begin{proposition} 
Let $B,N$ and $M$ be $R$-modules, such that $N\subseteq M$. If $R\to S$ is a ring morphism, then  \[\IM( S \otimes_R N_M^{\cl_{B}} \to S\otimes_R M) \subseteq \IM(S\otimes_R N\to S\otimes_R M)^{\cl_{S\otimes_R B}}_{S\otimes M}. \]
\end{proposition}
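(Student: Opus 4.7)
The plan is to unpack the definition of $\cl_{S \otimes_R B}$ and then use the standard natural isomorphism
\[(S \otimes_R B) \otimes_S (S \otimes_R M) \;\cong\; S \otimes_R (B \otimes_R M),\]
together with the fact that the image of $(S \otimes_R B) \otimes_S (S \otimes_R N) \to (S \otimes_R B) \otimes_S (S \otimes_R M)$ corresponds under this isomorphism to $S \otimes_R \mathrm{Im}(B \otimes_R N \to B \otimes_R M)$ (by right-exactness of tensor). This reduces the claim to a formal computation inside $S \otimes_R (B \otimes_R M)$.

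More precisely, writing $B_S := S \otimes_R B$, $M_S := S \otimes_R M$, $N_S := \mathrm{Im}(S \otimes_R N \to S \otimes_R M)$, I would take a generator $s \otimes u$ of $\mathrm{Im}(S \otimes_R N_M^{\cl_B} \to S \otimes_R M)$, with $s \in S$ and $u \in N_M^{\cl_B}$, and verify that it lies in $(N_S)^{\cl_{B_S}}_{M_S}$. By definition this requires, for every $b' \in B_S$, that $b' \otimes_S (s \otimes u)$ lie in $\mathrm{Im}(B_S \otimes_S N_S \to B_S \otimes_S M_S)$. Writing $b' = \sum_k t_k \otimes b_k$, the isomorphism above sends $b' \otimes_S (s \otimes u)$ to $\sum_k (t_k s) \otimes (b_k \otimes_R u) \in S \otimes_R (B \otimes_R M)$.

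The hypothesis $u \in N_M^{\cl_B}$ now gives, for each $k$, an expression $b_k \otimes_R u = \sum_j b_{k,j} \otimes_R n_{k,j}$ in $B \otimes_R M$ with $n_{k,j} \in N$. Substituting yields
\[\sum_k (t_k s) \otimes (b_k \otimes_R u) \;=\; \sum_{k,j} (t_k s) \otimes (b_{k,j} \otimes_R n_{k,j}),\]
which manifestly lies in $S \otimes_R \mathrm{Im}(B \otimes_R N \to B \otimes_R M)$, and hence in $\mathrm{Im}(B_S \otimes_S N_S \to B_S \otimes_S M_S)$. Extending to arbitrary elements of $\mathrm{Im}(S \otimes_R N_M^{\cl_B} \to S \otimes_R M)$ by $\ZZ$-linearity finishes the proof.

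The only real subtlety is bookkeeping: one must be careful that the module closure test for $\cl_{S \otimes_R B}$ is framed with tensor products over $S$, while the hypothesis on $u$ gives information about tensor products over $R$, so the canonical identification of the two tensor products (and of the images of $N$ inside each) is doing all the work. There is no genuine obstacle beyond making these identifications cleanly; nothing about the ring map $R \to S$ other than its existence is used.
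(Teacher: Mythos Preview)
Your proposal is correct and follows essentially the same approach as the paper: both reduce to a generator $s\otimes u$ with $u\in N_M^{\cl_B}$, invoke the hypothesis $b\otimes u\in\IM(B\otimes_R N\to B\otimes_R M)$ for each $b\in B$, and pass through the canonical identification $(S\otimes_R B)\otimes_S(S\otimes_R M)\cong S\otimes_R(B\otimes_R M)$. Your write-up is somewhat more explicit about handling a general $b'=\sum_k t_k\otimes b_k\in S\otimes_R B$ rather than just simple tensors, but this is a presentational difference, not a substantive one.
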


\begin{proof}
Suppose that $x \in N_M^{\cl_B}$, then we have that 
\[b \otimes x \in \IM( B \otimes_R N \to B \otimes_R M),\]
for all $b\in B$. Tensoring with $S$ we get  
\[b\otimes s \otimes x \in \IM(B \otimes_R S \otimes_R N \to B \otimes_R S \otimes_R M),\]
for all $b\in B$ and all $s\in S$. But we can rewrite the previous expression as  
\[b\otimes s' \otimes s \otimes x \in \IM(B \otimes_R S \otimes_S S \otimes_R N \to B \otimes_R S \otimes_S S \otimes_R M),\]
for all $b\in B$ and all $s,s'\in S$. Thus $(s\otimes x)\in \IM(S\otimes_R N\to S\otimes_R M)^{\cl_{S\otimes_R B}}_{S\otimes_R M}$.
\end{proof}

\begin{corollary}
Let $B$ be an $R$-module and $\cl_B$ the associated module closure. For any ideal $I$ in $R$ and any prime ideal $P$, \[I^{\cl_B}R_P\subseteq (IR_P)^{\cl_{B_P}}_{R_P}.\] Similarly, if $R$ is a local ring and $\hat{R}$ is its completion at the maximal ideal, then  \[I^{\cl_B}\hat{R} \subseteq (I\hat{R})^{\cl_{\hat{R}\otimes B}}_{\hat{R}}.\]  
\end{corollary}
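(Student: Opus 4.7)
The plan is to deduce both inclusions as direct specializations of the preceding proposition. In the setup of that proposition, take $M=R$ and $N=I$ (an ideal of $R$ viewed as a submodule of the free module of rank one). For the localization statement I would set $S=R_P$; for the completion statement I would set $S=\hat R$. In either case $S$ is an $R$-algebra, so tensoring $N\subseteq M=R$ up to $S$ and taking images back inside $S$ recovers the extended ideal: the image of $S\otimes_R I\to S\otimes_R R\cong S$ is $IS$, and the image of $S\otimes_R I^{\cl_B}\to S$ is $I^{\cl_B}\cdot S$.

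With those identifications in hand, the conclusion of the proposition
\[
\IM(S\otimes_R N_M^{\cl_B}\to S\otimes_R M)\subseteq \IM(S\otimes_R N\to S\otimes_R M)^{\cl_{S\otimes_R B}}_{S\otimes_R M}
\]
becomes exactly
\[
I^{\cl_B}S \subseteq (IS)^{\cl_{S\otimes_R B}}_{S}.
\]
For $S=R_P$ we have $S\otimes_R B=B_P$, giving the first asserted containment $I^{\cl_B}R_P\subseteq (IR_P)^{\cl_{B_P}}_{R_P}$. For $S=\hat R$ we have $S\otimes_R B=\hat R\otimes_R B$, yielding the second containment.

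The argument is essentially a matter of tracking the identification of the image of a tensor product of an ideal with a flat (indeed free) extension, so there is no real obstacle beyond the bookkeeping of module-vs.-ideal notation. If one wanted a more self-contained proof avoiding appeal to the preceding proposition, the main point is that for any $x\in I^{\cl_B}$ and any $b\in B$, an expression $b\otimes x=\sum_i f_i\otimes b_i$ in $B\otimes_R I$ tensors up to an analogous expression in $(S\otimes_R B)\otimes_S IS$, which witnesses $x\otimes 1\in (IS)^{\cl_{S\otimes_R B}}_S$; this is precisely the content we have just extracted from the proposition.
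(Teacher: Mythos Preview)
Your argument is correct and matches the paper's approach: the corollary is stated without proof, as an immediate specialization of the preceding proposition with $M=R$, $N=I$, and $S=R_P$ or $S=\hat R$. Your additional unpacking of the identifications $\IM(S\otimes_R I\to S)=IS$ and $S\otimes_R B=B_P$ (resp.\ $\hat R\otimes_R B$) is exactly what the paper leaves implicit.
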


\begin{definition}[{\cite{Lam}}]
\label{generate}
Recall that a module $B$ is said to \emph{generate} a module $D$ if some direct sum of copies of $B$ maps onto $D$. 
\end{definition}

The generation property enables us to compare the closures given by $B$ and $D$. Before we give the precise result we need a lemma.

\begin{lemma}
Let $R$ be a local ring. If $M$ and $N$ are $R$ modules, then $0^{\cl_M}_{\Hom_R(M,N)}=0$.  
\end{lemma}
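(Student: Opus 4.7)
The plan is to unwind the definition of $\cl_M$-closure of zero in $\Hom_R(M,N)$ and then kill the resulting tensors via the canonical evaluation pairing. By definition, an element $\varphi \in \Hom_R(M,N)$ lies in $0^{\cl_M}_{\Hom_R(M,N)}$ precisely when, for every $m \in M$, the element $m \otimes \varphi$ lies in the image of $M \otimes_R 0 \to M \otimes_R \Hom_R(M,N)$; but that image is $0$, so the hypothesis is that $m \otimes \varphi = 0$ in $M \otimes_R \Hom_R(M,N)$ for every $m \in M$.

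Next, I would invoke the natural evaluation homomorphism
\[
\mathrm{ev}\colon M \otimes_R \Hom_R(M,N) \longrightarrow N, \qquad m \otimes f \longmapsto f(m),
\]
which is a well-defined $R$-module map. Applying $\mathrm{ev}$ to the identity $m \otimes \varphi = 0$ yields $\varphi(m) = 0$. Since this holds for all $m \in M$, we conclude $\varphi = 0$, proving $0^{\cl_M}_{\Hom_R(M,N)} = 0$.

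There is essentially no obstacle here: the argument is purely formal and does not actually require the local hypothesis, which presumably is kept only for consistency with the ambient setting of the section. The one point to verify carefully is simply that the evaluation pairing $M \otimes_R \Hom_R(M,N) \to N$ is $R$-bilinear (hence factors through the tensor product), which is standard.
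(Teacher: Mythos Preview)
Your proof is correct and matches the paper's argument essentially line for line: both unwind the definition to get $m \otimes \varphi = 0$ for all $m$, then apply the evaluation map $M \otimes_R \Hom_R(M,N) \to N$ to conclude $\varphi = 0$. Your observation that the local hypothesis is not actually used is also accurate.
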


\begin{proof}
Let $\phi \in 0^{\cl_M}_{\Hom_R(M,N)}$, then for every $m\in M$ we have that $m \otimes \phi =0 $ in $M \otimes_R \Hom_R(M,N)$. By means of the natural map $M \otimes \Hom_R(M,N) \to N$, given by $m \otimes \phi \mapsto \phi(m)$, we have that $\phi(m)=0$ for all $m\in M$, which implies $\phi =0$. The result follows. 
\end{proof}

The following proposition is the result of a conversation with Yongwei Yao, and gives one case where we have containment of module closures.

\begin{proposition}
\label{generationandclosures}
Let $B$ and $D$ be \fg\ $R$-modules, where $R$ is complete and local. Then $\cl_B \subseteq \cl_D$, i.e. $N_M^{\cl_B} \subseteq N_M^{\cl_D}$ for all $R$-modules $N \subseteq M$, if and only if $B$ generates $D$.
\end{proposition}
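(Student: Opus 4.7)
The plan is to prove the two implications separately, with Matlis duality doing the real work in the reverse direction.

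For the forward direction, suppose $B$ generates $D$, so there is a surjection $\pi\colon B^{\oplus I} \twoheadrightarrow D$ for some index set $I$. I would first observe that $\cl_B = \cl_{B^{\oplus I}}$: under the canonical identification $B^{\oplus I}\otimes M = (B\otimes M)^{\oplus I}$, a tuple $(b_i)_i \otimes u$ lies in the image of $B^{\oplus I}\otimes N$ if and only if each individual $b_i\otimes u$ lies in the image of $B\otimes N$. To conclude $\cl_{B^{\oplus I}} \subseteq \cl_D$, I would do a diagram chase: given $u \in N_M^{\cl_{B^{\oplus I}}}$ and $d \in D$, lift $d$ to some $\beta \in B^{\oplus I}$, use that $\beta \otimes u \in \IM(B^{\oplus I}\otimes N \to B^{\oplus I}\otimes M)$, and push this through the commutative square induced by $\pi \otimes \id$ to conclude $d \otimes u \in \IM(D \otimes N \to D \otimes M)$.

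For the reverse direction, let $T = \sum_{f \in \Hom_R(B, D)} f(B) \subseteq D$ denote the trace of $B$ in $D$; the goal is to show $T = D$. Write $E = E_R(k)$ and $D^\vee = \Hom_R(D, E)$. Applying the preceding lemma to $M = D$ and $N = E$ gives $0^{\cl_D}_{D^\vee} = 0$, and the hypothesis $\cl_B \subseteq \cl_D$ then forces $0^{\cl_B}_{D^\vee} = 0$. The heart of the proof is to identify this submodule in terms of $T$. For \fg\ $B$, I would establish the natural isomorphism
\[
B \otimes_R D^\vee \;\cong\; \Hom_R(\Hom_R(B, D), E) = \Hom_R(B, D)^\vee,
\]
which holds by inspection for $B = R^n$ and extends to any \fg\ $B$ by taking a presentation $R^n \to R^m \to B \to 0$ and invoking the exactness of $(-)^\vee$. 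Under this isomorphism, $b \otimes \phi$ corresponds to the functional $f \mapsto \phi(f(b))$ on $\Hom_R(B, D)$, so $\phi \in 0^{\cl_B}_{D^\vee}$ precisely when $\phi(f(b)) = 0$ for every $b \in B$ and every $f \in \Hom_R(B, D)$, i.e., when $\phi$ vanishes on $T$. Therefore $0^{\cl_B}_{D^\vee} = (D/T)^\vee$.

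Combining the two computations yields $(D/T)^\vee = 0$. Since $D/T$ is finitely generated over the local ring $R$, Nakayama's lemma together with the embedding $k \hookrightarrow E$ forces $D/T = 0$, so $T = D$ and $B$ generates $D$. The step I expect to require the most care is setting up the isomorphism $B \otimes_R D^\vee \cong \Hom_R(B, D)^\vee$ and matching its description of pure tensors with the defining tensor condition for $\cl_B$; this is where the finite generation of $B$ is genuinely used, since the argument relies on comparing the right-exact sequence coming from a presentation of $B$ with the one obtained by dualizing the associated left-exact sequence of $\Hom_R(-, D)$.
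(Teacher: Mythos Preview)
Your proof is correct and follows the same overall strategy as the paper: both use the preceding lemma to obtain $0^{\cl_B}_{D^\vee}=0$ from the hypothesis, and both translate this via Matlis-type duality into the statement that $B$ generates $D$. The difference lies in how the duality step is packaged. The paper dualizes the injection $D^\vee \hookrightarrow B^{\oplus r}\otimes D^\vee$, applies Hom--tensor adjunction, and then uses completeness of $R$ to identify $D^{\vee\vee}$ with $D$, arriving at a surjection $\Hom_R(B^{\oplus r},D)\twoheadrightarrow D$. You instead identify $0^{\cl_B}_{D^\vee}$ directly with $(D/T)^\vee$ through the natural isomorphism $B\otimes_R D^\vee \cong \Hom_R(B,D)^\vee$ (valid for finitely generated $B$) and conclude $D/T=0$ from $E$ being an injective cogenerator. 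Your version is slightly more streamlined and, notably, never appeals to $D^{\vee\vee}\cong D$; as written it establishes the result for any local ring $R$, not only complete ones.
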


\begin{proof}
If $B$ generates $D$ (see Definition \ref{generate}), then $\cl_B \subseteq \cl_D$ by \cite[Proposition 3.6]{RGRebecca_Closures_Big_CM_modules_and_singularities}. For the reverse direction, assume $\cl_B \subseteq \cl_D$.

Let $b_1,\ldots,b_r$ be a generating set for $B$ and $E$ be the injective hull of the residue field of $R$. We have a map 
\[h:\Hom_R(D,E) \to B^{\oplus r} \otimes \Hom_R(D,E)\]
given by $h(f)=(b_1 \oplus \ldots \oplus b_r) \otimes f$. The kernel of this map is the set of elements $f$ of $\Hom_R(D,E)$ such that $b_i \otimes f=0$ for all $1 \le i \le r$, which is equal to the set of $f \in \Hom_R(D,E)$ such that $b \otimes f=0$ for all $b \in B$. This is equal to $0^{\cl_B}_{\Hom_R(D,E)}$. Hence, by our assumption, $0^{\cl_B}_{\Hom_R(D,E)} \subseteq 0^{\cl_D}_{\Hom_R(D,E)}$, but the latter is 0 by the preceding lemma. This implies that $h$ is injective.

Since $h$ is injective, its Matlis dual 
\[h^\vee: \Hom_R(B^{\oplus r} \otimes \Hom_R(D,E),E) \to \Hom(\Hom_R(D,E),E)\]
is surjective. The map $h^\vee$ takes a map $\phi:B^{\oplus r} \otimes \Hom_R(D,E) \to E$ to 
$\phi \circ h:\Hom_R(D,E) \to E$. By Hom-tensor adjointness, we have
\[\begin{aligned}
\Hom_R(B^{\oplus r} \otimes \Hom_R(D,E),E) &\cong \Hom_R(B^{\oplus r},\Hom_R(\Hom_R(D,E),E)) \\
\end{aligned}\]
Under this isomorphism, a map $\psi:B^{\oplus r} \to \Hom_R(\Hom_R(D,E),E)$ is sent to the map $\phi:B^{\oplus r} \otimes \Hom_R(D,E) \to E$ sending
\[(c_1,\ldots,c_r) \otimes f \mapsto (\phi(c_1,\ldots,c_r))(f).\]
Put together, this gives us a surjective map 
\[\Hom(B^{\oplus r},\Hom_R(\Hom_R(D,E),E))) \to \Hom_R(\Hom_R(D,E),E))\]
that sends $\psi:B^{\oplus r}  \to \Hom_R(\Hom_R(D,E),E)$ to $\phi \circ h:\Hom_R(D,E) \to E$. Combining earlier information, $\phi \circ h=\psi(b_1,\ldots,b_r)$.

Since  $R$ is complete and $D$ is \fg, $D \cong \Hom_R(\Hom_R(D,E),E)$, and therefore the map 
\[\Hom(B^{\oplus r},D) \twoheadrightarrow D.\]
given by $\left(\psi:B^{\oplus r}\to D \right) \mapsto \psi(b_1 \oplus \ldots \oplus b_r )$ is surjective. Hence for every $d \in D$, there is a map $B^{\oplus r} \to D$ whose image contains $d$. Therefore, $B$ generates $D$.
\end{proof}

The following proposition characterizes regular rings in terms of the behaviour of Dietz closures. This result describes an important connection between the behavior of big \CM\ module closure operations and the singularities of the ring.

\begin{theorem}[{\cite[Theorem 2]{RGRebecca_Closures_Big_CM_modules_and_singularities}}]  
\label{trivialiffregular}
Suppose that $(R,\m)$ is a local domain that has at least one Dietz closure (in particular $R$ may be any complete local domain). Then $R$ is regular if and only if all Dietz closures on $R$ are trivial on submodules of \fg\ $R$-modules. 
\end{theorem}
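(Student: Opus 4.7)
The plan is to prove both directions of the biconditional, using Dietz closure axioms and the machinery of phantom extensions. By part (a) of Proposition~\ref{Closure_properties} (applicable because every Dietz closure is functorial and semi-residual), showing that $\cl$ acts trivially on all pairs $N\subseteq M$ of finitely generated modules reduces to showing $0^{\cl}_M=0$ for every finitely generated $R$-module $M$.

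For the forward direction, assume $R$ is regular and $\cl$ is a Dietz closure. I would first use the axioms (faithfulness together with functoriality and semi-residuality applied via the quotient $R\to R/\m$) to establish $0^{\cl}_R=0$; combined with parts (b) and (g) of Proposition~\ref{Closure_properties}, this handles free modules $R^n$. For a general finitely generated $M$, the plan is to induct on projective dimension, which is finite because $R$ is regular. Given a presentation $0\to K\to R^n\to M\to 0$ with $\operatorname{pd} K<\operatorname{pd} M$, part (a) reformulates $0^{\cl}_M=0$ as $K^{\cl}_{R^n}=K$. I would then deploy the generalized colon-capturing axiom iteratively against a system of parameters on $R$ (a regular sequence, since regular local rings are Cohen--Macaulay), using functoriality to transport closure statements on $R^n$ to closure statements on quotients involving $K$ and its syzygies, invoking the inductive hypothesis on the latter.

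For the backward direction, assume every Dietz closure is trivial on submodules of finitely generated modules. The standing hypothesis supplies a Dietz closure and hence, by Dietz's correspondence, a big Cohen--Macaulay module $B$; then $\cl_B$ is a Dietz closure, trivial by assumption, so $I=(IB:_R B)$ for every ideal $I$ and $B$ is ideal-faithful over $R$. To extract regularity, I would pass to a minimal free resolution $\cdots\to R^{b_1}\to R\to k\to 0$ of the residue field and examine its syzygy short exact sequences $0\to \Omega^{i+1}k\to R^{b_i}\to \Omega^ik\to 0$. The triviality of every Dietz closure should force the associated phantom $\Ext^1$ classes to vanish in a controlled way, ultimately forcing the minimal resolution to terminate at finite length, so that $\operatorname{pd}_R k<\infty$ and $R$ is regular.

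The main obstacle is the backward direction --- specifically, producing, whenever $R$ fails to be regular, an explicit Dietz closure that witnesses nontriviality on some submodule of a finitely generated module. The natural construction is a module closure $\cl_B$ for a big Cohen--Macaulay $B$ tailored to the infinite minimal resolution of $k$; verifying that some syzygy sequence carries a genuine nontrivial phantom class under this closure is the technical core of the argument, carried out in~\cite{RGRebecca_Closures_Big_CM_modules_and_singularities}.
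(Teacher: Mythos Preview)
This theorem is quoted from \cite{RGRebecca_Closures_Big_CM_modules_and_singularities}; the present paper does not reprove it, but the surrounding remarks tell us what the cited proof does. For the forward direction, the argument is \emph{not} a direct manipulation of the Dietz axioms: one first shows that every Dietz closure on $R$ is contained in $\cl_B$ for some big Cohen--Macaulay module $B$ (this is Theorem~5.1 of \cite{RGRebecca_Closures_Big_CM_modules_and_singularities}, built from module modifications), and then uses the fact that big Cohen--Macaulay modules over a regular local ring are faithfully flat \cite{HochsterHunekeInfiniteIntegralExtensions}, so $\cl_B$ is trivial. For the backward direction, the cited paper produces, over any non-regular $R$, a big Cohen--Macaulay module (e.g.\ a high syzygy of $k$, cf.\ Remark~\ref{Rmk: Top syz induce nontrivial closures}) whose module closure is nontrivial.

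Your backward sketch is broadly in line with this. Your forward direction, however, takes a genuinely different route and has a real gap. You propose to induct on projective dimension: given $0\to K\to R^n\to M\to 0$, you want $K^{\cl}_{R^n}=K$, and you say you will ``deploy the generalized colon-capturing axiom iteratively against a system of parameters.'' But generalized colon-capturing is a very specialized statement: it requires a surjection $f:M\twoheadrightarrow R/(x_1,\dots,x_k)$ together with a distinguished $v\in M$ mapping to the next parameter, and it only compares $(Rv)^{\cl}_M\cap\ker f$ with $(Jv)^{\cl}_M$. There is no evident way to feed an arbitrary syzygy $K\subseteq R^n$ into this shape, and you have not indicated one. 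The axiom does not give you control over $K^{\cl}_{R^n}$ for a general submodule $K$, so the inductive step as written does not go through. This is exactly why the proof in \cite{RGRebecca_Closures_Big_CM_modules_and_singularities} routes through big Cohen--Macaulay modules and faithful flatness rather than arguing directly from the axioms.
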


Note that this result holds regardless of the characteristic of $R$, as by \cite{HochsterHunekeInfiniteIntegralExtensions, andre}, we know that big \CM\ algebras (and in particular big \CM\ modules) exist over complete local domains of any characteristic.

In fact, the proof of this statement in \cite{RGRebecca_Closures_Big_CM_modules_and_singularities} uses the fact that big \CM\ modules over regular rings are faithfully flat \cite{HochsterHunekeInfiniteIntegralExtensions}, and we get the following corollary to Theorem \ref{trivialiffregular} and its proof in \cite{RGRebecca_Closures_Big_CM_modules_and_singularities}:

\begin{corollary}
\label{rmk regular = closures trivial}
Suppose that $(R,m)$ is a local domain with a big \CM\ module $B$ (in particular, $R$ may be any complete local domain). Then $R$ is regular if and only if all big \CM\ module closures on $R$ are trivial (on submodules of all $R$-modules).
\end{corollary}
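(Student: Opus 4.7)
The plan is to split the iff into its two directions, with the essential input for $(\Rightarrow)$ being that any big \CM\ module over a regular local ring is faithfully flat \cite{HochsterHunekeInfiniteIntegralExtensions}; plain flatness is all that actually enters the argument, but this is the standard way to secure it.

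For $(\Rightarrow)$, suppose $R$ is regular and let $C$ be any big \CM\ $R$-module. Fix a pair $N \subseteq M$ of $R$-modules and $u \in N_M^{\cl_C}$. I would show $u \in N$ by passing to the quotient: write $\bar u$ for the image of $u$ in $M/N$. Flatness of $C$ makes $C \otimes_R N \hookrightarrow C \otimes_R M$, so the condition $c \otimes u \in \IM(C \otimes_R N \to C \otimes_R M)$ for every $c \in C$ is equivalent to $c \otimes \bar u = 0$ in $C \otimes_R (M/N)$ for every $c$. Restricting to the cyclic submodule $R\bar u \subseteq M/N$ and applying flatness again gives $c \otimes \bar u = 0$ already in $C \otimes_R R\bar u \cong C/\Ann_R(\bar u)\,C$, so $C = \Ann_R(\bar u)\,C$. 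If $\bar u \neq 0$, then $\Ann_R(\bar u)$ is a proper ideal of $R$, hence contained in $\m$, forcing $C = \m C$ and contradicting the big \CM\ hypothesis. Thus $\bar u = 0$ and $u \in N$.

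For $(\Leftarrow)$, assume every big \CM\ module closure on $R$ is trivial on submodules of all $R$-modules; in particular each such closure is trivial on submodules of \fg\ $R$-modules. Theorem \ref{trivialiffregular} would directly yield regularity if we had this for every Dietz closure, but our hypothesis only covers the big \CM\ module closures. The proposed route is to invoke the \emph{proof} of Theorem \ref{trivialiffregular} in \cite{RGRebecca_Closures_Big_CM_modules_and_singularities} rather than just its statement: as the paragraph preceding this corollary notes, that proof produces, for a non-regular local domain, a specific big \CM\ module whose closure is nontrivial on some \fg\ module. Taking the contrapositive against our hypothesis gives $R$ regular.

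The main obstacle is the forward direction, and specifically the step from \fg\ modules (the setting of Theorem \ref{trivialiffregular}) to arbitrary modules: one has to justify reducing to a cyclic submodule of $M/N$ even when $M/N$ is not finitely generated, and this is exactly where flatness of $C$ enters in a way that mere functoriality of $\cl_C$ would not permit. The reverse direction is essentially a careful rereading of the existing proof to confirm that the witness produced there is itself a big \CM\ module closure.
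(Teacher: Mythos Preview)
Your proposal is correct and follows essentially the same approach as the paper: the paper does not give a standalone proof but simply records the corollary as a consequence of Theorem~\ref{trivialiffregular} together with the fact from \cite{HochsterHunekeInfiniteIntegralExtensions} that big \CM\ modules over regular local rings are faithfully flat. Your write-up unpacks the forward direction explicitly (via the cyclic-submodule reduction and $C=\m C$ contradiction), which is more detail than the paper provides, but the underlying idea is identical.
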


\begin{remark} \label{Rmk: Top syz induce nontrivial closures}
Let $(R,\m,k)$ be a Cohen-Macaulay local domain of dimension $d$. If $R$ is approximately Gorenstein (for example if $\dim(R) \ne 1$), then for all $n \ge d$, the $R$-modules $\text{syz}^n(k)$ induce Dietz closures that are trivial 
if and only if $R$ is regular \cite{RGRebecca_Closures_Big_CM_modules_and_singularities}. So when $R$ is not regular, $\text{syz}^n(k)$ gives an example of a nontrivial Dietz closure on $R$.
\end{remark}

We also have the following:

\begin{lemma}
\label{onecmtrivialimpliescmclosure}
Let $R$ be a local domain with a big \CM\ $R$-module $B$ such that $\cl_B$ is trivial on ideals of $R$. Then $R$ is \CM.
\end{lemma}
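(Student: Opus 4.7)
The plan is to verify the defining property of Cohen-Macaulayness for $R$ directly: show that an arbitrary system of parameters $x_1,\ldots,x_d$ of $R$ is a regular sequence on $R$, by transporting the corresponding statement from $B$ down to $R$ via the hypothesis that $\cl_B$ is trivial on ideals.

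First, since $R$ is a domain, the nonzero element $x_1$ is automatically a nonzerodivisor on $R$, so the base case is free. For the inductive step I would fix $1\le k<d$, set $I=(x_1,\ldots,x_k)$, and take $u\in R$ with $x_{k+1}u\in I$. Multiplying by an arbitrary $b\in B$ yields $x_{k+1}(ub)\in IB$; because $B$ is big \CM, the sequence $x_1,\ldots,x_{k+1}$ is regular on $B$, so $x_{k+1}$ is a nonzerodivisor modulo $IB$, which forces $ub\in IB$. Since $b\in B$ was arbitrary, $uB\subseteq IB$, and the standard identification $I^{\cl_B}_R=(IB:_R B)$ recorded in the remark after the definition of module closure then gives $u\in I^{\cl_B}_R$. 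The triviality hypothesis collapses this to $u\in I$, completing the induction and showing that $x_1,\ldots,x_d$ is a regular sequence on $R$.

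The argument is essentially a one-line transfer, so no genuine obstacle arises. The key conceptual point is simply that when $\cl_B$ is trivial on ideals, the identity $I^{\cl_B}_R=(IB:_R B)$ converts the colon-capturing automatically enjoyed by regular sequences on a big \CM\ module $B$ into colon-capturing for the same sequences on $R$ itself, which is exactly what is needed for the \CM\ property.
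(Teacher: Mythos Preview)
Your proof is correct and follows essentially the same approach as the paper. The paper's proof simply invokes the fact that $\cl_B$ captures colons (i.e., $(x_1,\ldots,x_k):x_{k+1}\subseteq (x_1,\ldots,x_k)^{\cl_B}_R$ for partial systems of parameters) and then applies the triviality hypothesis; you have unpacked that colon-capturing step explicitly via $I^{\cl_B}_R=(IB:_R B)$ and the regularity of the sequence on $B$, which is exactly what lies behind the paper's citation.
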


\begin{proof}
The closure $\cl_B$ captures colons, so for all partial \sops\ $x_1,\ldots,x_{k+1}$ on $R$, we must have
\[(x_1,\ldots,x_k):x_{k+1} \subseteq (x_1,\ldots,x_k)^{\cl_B}_R=(x_1,\ldots,x_k).\]
Hence $R$ is \CM.
\end{proof}

\subsection{Trace ideals and Modules}

\begin{definition}
Let $R$ be a ring and $A,B$ $R$-modules. The \emph{trace of $A$ with respect to $B$} is defined as \[\intr_B(A)=\sum_{\phi:B\to A}\phi(B) \] where the sum runs over all $R$-linear maps from $B$ to $A$. 
\end{definition}

That is, the trace of a module $A$ with respect to another module $B$ is the submodule generated by the images of all possible maps from $B$ to $A$. 

\begin{remark}
\label{mentiontrace}
\begin{enumerate}
\item $B$ generates $A$ if and only if $\intr_B(A)=A$. One example where $B$ generates $A$ is when there is a surjective map from $B$ to $A$, or if $B=R$.
\item  When $A=R$, this is also referred to as the trace ideal, $\text{tr}_B(R)$ \cite{Lam}.
\end{enumerate}
\end{remark}

We collect some basic properties of the trace in the next proposition.

\begin{proposition} [{C.f.  \cite[c.f. Proposition 2.8 ]{LindoHaydeeTraceIdealsAndCentersOfEndomorphismRingsOfModulesOverCommutativeRings} }]
\label{properties interior}

Let $R$ be a ring, and $A,B,C$ $R$-modules. The following holds.

\begin{enumerate}

\item We have \[\intr_B(A)=\IM( \Hom_R(B,A)\otimes B \to A) \] where the map is given by $\phi\otimes b \mapsto \phi(b)$.

\item The behavior with respect to direct sums is given by \[\intr_{B\oplus C}(A)=\intr_{B}(A)+\intr_{C}(A).\] 

\item More generally, if $\{B_i\}_{i\in I}$ is an arbitrary family of $R$-modules, then \[\intr_{\bigoplus_{i\in I} B_i}(A)=\sum_{i \in I} \intr_{B_i}(A).\] 

\item For tensor products, we have \[ \intr_{B\otimes C}(A) \subseteq \intr_B(A) \cap \intr_C(A).\] 

Furthermore, if $B$ generates $\Hom_R(C,A)$ or $C$ generates $\Hom_R(B,A)$, then the equality holds.

\item If $B$ generates $C$ then  \[\intr_B(A)\supseteq \intr_C(A).\]

\item $\intr_A(R)=R$ if and only if $A$ generates all $R$-modules. If $R$ is a local ring then $\intr_A(R)=R$ if and only if $A$ has a free summand \cite[Proposition 2.8, Part iii]{LindoHaydeeTraceIdealsAndCentersOfEndomorphismRingsOfModulesOverCommutativeRings} and \cite[Lemma 3.45]{curtisreiner}.

\item $\intr_{B\otimes \Hom_R(B,A)} (A)=\intr_B(A)$. Furthemore, when $A=R$ and $B$ is reflexive we also have $\intr_{B\otimes \Hom_R(B,R)} (R)=\intr_{\Hom_R(B,R)}(R)$.
\end{enumerate}
\end{proposition}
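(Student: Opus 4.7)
The strategy is to derive all seven items from the definition $\intr_B(A) = \sum_{\phi : B \to A} \phi(B)$ together with Hom-tensor adjunction, taking the items in roughly the order stated. Item (1) is just the definition repackaged: the image of the evaluation map $\Hom_R(B,A) \otimes B \to A$ consists of finite sums $\sum_i \phi_i(b_i)$, which is exactly $\sum_\phi \phi(B)$. Items (2) and (3) follow from the natural isomorphism $\Hom_R(\bigoplus_i B_i, A) \cong \prod_i \Hom_R(B_i, A)$: a map out of the direct sum is a tuple of component maps, and since every element of $\bigoplus_i B_i$ has finite support, the image of any such map equals the (finite) sum of component images. Item (5) is obtained by fixing a surjection $\pi : B^{\oplus I} \twoheadrightarrow C$ (which exists because $B$ generates $C$) and noting that for any $\psi : C \to A$ the composite $\psi \circ \pi$ has the same image as $\psi$, so $\intr_C(A) \subseteq \intr_{B^{\oplus I}}(A) = \intr_B(A)$ by (3).

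For (4), given $\phi : B \otimes C \to A$, fixing one tensor slot produces $R$-linear maps $b \mapsto \phi(b \otimes c)$ and $c \mapsto \phi(b \otimes c)$, and since elementary tensors generate $B \otimes C$, $\IM \phi \subseteq \intr_B(A) \cap \intr_C(A)$. For the equality under the hypothesis that $B$ generates $\Hom_R(C,A)$: by (1), every $\psi \in \Hom_R(C,A)$ may be written $\psi = \sum_i f_i(b_i)$ with $f_i \in \Hom_R(B, \Hom_R(C,A))$ and $b_i \in B$; via Hom-tensor adjunction each $f_i$ corresponds to $g_i : B \otimes C \to A$, and then $\psi(c) = \sum_i g_i(b_i \otimes c) \in \intr_{B \otimes C}(A)$, giving $\intr_C(A) \subseteq \intr_{B \otimes C}(A)$. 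Item (7) follows quickly: by (4) applied with $C = \Hom_R(B,A)$ (which trivially generates itself), $\intr_{B \otimes \Hom_R(B,A)}(A) = \intr_B(A) \cap \intr_{\Hom_R(B,A)}(A)$, and by (1) the evaluation map exhibits $\intr_B(A) \subseteq \intr_{B \otimes \Hom_R(B,A)}(A)$, so equality holds. The reflexive $A = R$ statement is (7) applied with $B$ replaced by $\Hom_R(B,R)$, using the canonical isomorphism $\Hom_R(\Hom_R(B,R), R) \cong B$.

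Item (6) is the most delicate and is where I would spend the most care. For the forward direction, write $1 = \sum_i \phi_i(a_i)$ with $\phi_i : A \to R$ and $a_i \in A$; for any $R$-module $M$ and $m \in M$, the maps $\psi_{i,m} : A \to M$ defined by $x \mapsto \phi_i(x) m$ satisfy $\sum_i \psi_{i,m}(a_i) = m$, so $m \in \intr_A(M)$ and hence $A$ generates $M$. The converse is Remark \ref{mentiontrace}(1) applied to $A$ generating $R$. In the local case, since $\m \neq R$ at least one $\phi_i(a_i)$ must be a unit, whence $\phi_i : A \to R$ is surjective; projectivity of $R$ splits the short exact sequence $0 \to \Ker \phi_i \to A \to R \to 0$, exhibiting $R$ as a summand of $A$. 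Conversely, a free summand immediately yields $\intr_A(R) = R$ via the projection. The main obstacle throughout, mild as it is, is the bookkeeping in (4) and (7): one must phrase Hom-tensor adjunction carefully enough that the generation hypotheses literally produce the required presentations of elements of $\intr_C(A)$, and nothing more substantive arises.
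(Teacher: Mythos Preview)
Your proposal is correct and follows essentially the same approach as the paper. The only notable difference is item (6): the paper simply defers to the cited references (Lindo and Curtis--Reiner), whereas you supply a self-contained argument; your argument is the standard one and is fine. In (7) you invoke the full equality from (4), which is slight overkill---only the easy inclusion $\intr_{B\otimes C}(A)\subseteq \intr_B(A)$ together with the evaluation map is needed, exactly as the paper does---but this is harmless.
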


\begin{proof}
\begin{enumerate}

\item This is clear from the definition.

\item From the definition we see that
\begin{align*}
\intr_{B \oplus C}(A) &= \left( \phi(b,c) \mid \phi\in \Hom_R(B\oplus C,A), b\in B,c \in C \right)\\
&= \left( \phi_1(b)+\phi_2(c) \mid \phi_1\in \Hom_R(B,A), \phi_2\in \Hom_R(C,A), b\in B,c \in C \right)\\
&= \left( \phi_1(b) \mid \phi_1\in \Hom_R(B,A), b\in B\right) + \left( \phi_2(c) \mid \phi_2\in \Hom_R(C,A), c\in C\right)\\
&= \intr_{B}(A)+\intr_C(A)
\end{align*}

\item We proceed as in the previous case
\begin{align*}
\intr_{\bigoplus_{i\in I} B_i}(A)&= \left( \phi((b_i)_{i\in I}) \mid \phi\in \Hom_R\left(\oplus_{i\in I} B_i,A\right), b_i\in B_i, b_i = 0 \text{ for all but finitely many } i \right)\\
&=\left( \sum_{i\in I} \phi_i(b_i) \mid \phi_i\in \Hom_R\left( B_i,A\right), b_i\in B_i, b_i = 0 \text{ for all but finitely many } i \right)\\
&=\sum_{i\in I}  \left( \phi_i(b_i) \mid \phi_i\in \Hom_R(B_i,A), b_i\in B_i,\right) \\
&= \sum_{i\in I}\intr_{B_i}(A)
\end{align*}

which is what we wanted.  

\item Note that for any $\phi \in \Hom_R(B\otimes C,A)$ and $c \in C$, we have a map $\phi(-\otimes c):B\to A$ sending $b \mapsto \phi(b \otimes c)$. Hence $\phi(b\otimes c) \in \intr_B(A)$ for all $b \in B$. Similarly, $\phi(b\otimes c) \in \intr_C(R)$ and the result follows. 

To get the equality, assume that $B$ generates $\Hom_R(C,A)$. Then for $a\in \intr_B(A)\cap \intr_C(A)$ there exists $\phi:C\to A$ such that $\phi(c)=a$ for some $c\in C$. Now as $B$ generates $\Hom_R(C,A)$, there exists a map $\Psi:B \to \Hom_R(C,A)$ and an element $b \in B$ such that $\Psi(b)=\phi$. Consider the map $B\otimes C \to A$ given by $y\otimes z \mapsto \Psi(y)(z)$. This map is well defined and $b\otimes c \mapsto a$. The result follows. The case where $C$ generates $\Hom_R(B,A)$ works the same way.  

\item This follows from the fact that every element $a$ in $\intr_C(A)$ can be obtained via a map $C\to A$ and an element $c \in C$. This element $c$ will be in the image of some map $B \to C$, and so its image $a$ in $A$ can be obtained via the composition $B\to C \to A$. Hence $a \in \intr_B(A)$.

\item Follows as in the references, where the hypothesis that $A$ is \fg\ used by Lindo is not needed.

\item By part (4) we have that $\intr_{B\otimes \Hom_R(B,A)}(A) \subseteq \intr_B(A)$. On the other hand we have the map $B \otimes \Hom_R(B,A) \to A$ given by $b\otimes \phi \mapsto \phi(b)$. This implies $\intr_{B\otimes \Hom_R(B,A)}(A) \supseteq \IM(B \otimes \Hom_R(B,A) \to A) = \intr_B(A)$. The last assertion is trivial after noting that $B=\Hom_R(\Hom_R(B,R),R)$.

\end{enumerate}
\end{proof}

The result below relates traces of modules in an exact sequence.

\begin{proposition}
Let $0\to B \xrightarrow{\alpha} C \to D \to 0$ be a short exact sequence of $R$-modules, and $A$ any other $R$-module. If $J= \Ann_R(\Ext^1_R(D,A))$, then \[ J\intr_B(A)+\intr_D(A)\subseteq\intr_C(A).\]
\end{proposition}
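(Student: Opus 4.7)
The plan is to apply the contravariant functor $\Hom_R(-,A)$ to the short exact sequence and chase the resulting long exact sequence. Specifically, from $0\to B \xrightarrow{\alpha} C \xrightarrow{\beta} D \to 0$ we obtain
\[
0 \to \Hom_R(D,A) \xrightarrow{\beta^*} \Hom_R(C,A) \xrightarrow{\alpha^*} \Hom_R(B,A) \xrightarrow{\partial} \Ext^1_R(D,A) \to \cdots
\]
The containment $\intr_D(A) \subseteq \intr_C(A)$ should be the easy half: given $\phi \in \Hom_R(D,A)$ and $d \in D$, lift $d$ to some $c \in C$ via the surjection $\beta$, so that $(\phi \circ \beta)(c) = \phi(d)$ exhibits $\phi(d)$ as lying in $\intr_C(A)$. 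This immediately shows $\beta^*$ sends generators of $\intr_D(A)$ into $\intr_C(A)$.

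For the more substantive containment $J\intr_B(A) \subseteq \intr_C(A)$, I would fix $\psi \in \Hom_R(B,A)$ and $j \in J$. The key observation is that $\partial$ is $R$-linear, so $\partial(j\psi) = j\cdot \partial(\psi) = 0$ because $j \in J = \Ann_R(\Ext^1_R(D,A))$. By exactness at $\Hom_R(B,A)$, this places $j\psi$ in $\IM(\alpha^*)$, so there exists $\tilde\psi \in \Hom_R(C,A)$ with $\tilde\psi \circ \alpha = j\psi$. Then for any $b \in B$,
\[
j\cdot \psi(b) = \tilde\psi(\alpha(b)) \in \tilde\psi(C) \subseteq \intr_C(A).
\]
Since $\intr_B(A)$ is generated by elements of the form $\psi(b)$, this gives $j\,\intr_B(A) \subseteq \intr_C(A)$, and then summing over $j \in J$ yields $J\,\intr_B(A) \subseteq \intr_C(A)$.

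Combining the two containments and using that $\intr_C(A)$ is a submodule of $A$, we conclude $J\intr_B(A) + \intr_D(A) \subseteq \intr_C(A)$. There is no serious obstacle here; the only subtlety is remembering that the connecting homomorphism $\partial$ is $R$-linear, so annihilators of $\Ext^1_R(D,A)$ transfer via $\partial$ to a statement about lifting scalar multiples of maps $B\to A$ to maps $C\to A$. Everything else is a direct translation of the definition of $\intr$ as a sum of images.
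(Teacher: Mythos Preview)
Your proof is correct and follows essentially the same route as the paper's: both apply $\Hom_R(-,A)$ to the short exact sequence, use the surjection $C\to D$ to obtain $\intr_D(A)\subseteq\intr_C(A)$, and use that $j\psi$ lies in the image of $\alpha^*$ (since $j$ annihilates $\Ext^1_R(D,A)$) to lift $j\psi$ to a map $C\to A$ and conclude $J\intr_B(A)\subseteq\intr_C(A)$. The only cosmetic difference is that the paper cites a prior lemma for the first containment while you spell it out directly.
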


\begin{proof}
By Proposition \ref{properties interior} part 5 we have that $\intr_D(A) \subseteq \intr_C(A)$. Let $a \in \intr_B(A)$. Then there exist $\phi \in \Hom_R(B,A)$ and $b\in B$ such that $\phi(b)=a$. From the exact sequence 
\[ \Hom_R(C,A) \to \Hom_R(B,A) \to \Ext^1_R(D,A) \]
we can conclude that for any $r \in \Ann_R(\Ext^1_R(D,A)),$ $r\phi \in \IM(\Hom_R(C,A)\xrightarrow{\hat{\alpha}} \Hom_R(B,A))$, say $r\phi=\hat{\alpha}(\tilde{\phi})=\tilde{\phi} \circ \alpha$. This implies that $ra = r \phi(b) = (\tilde{\phi}\circ \alpha) (b)$. Setting $c=\alpha(b)$, we have $ra=\tilde{\phi}(c)$. The result follows. 

\end{proof}

\section{Test ideals and Trace Ideals}
\label{testidealsandinteriors}

In this section we define the test ideal of an arbitrary closure operation, give some of its basic properties, and prove that the test ideal of a module closure is a trace ideal.

\begin{definition}
Let $R$ be a ring and $\cl$ be a closure operation on $R$-modules. The \emph{big test ideal of $R$ associated to $\cl$} is defined as \[ \tau_{\cl}(R)=\bigcap_{N\subseteq M}\left(N:N^{\cl}_M\right)\] where the intersection runs over any (not necessarily finitely-generated) $R$-modules $N,M$. In the case that $\cl$ is generated from a $R$-module $B$, (resp. a family $\mathcal{B}$) that is $\cl =\cl_B$ we also denote this ideal by $\tau_B(R)$ (resp. $\tau_{\mathcal{B}}(R)$). We sometimes refer to the big test ideal as the test ideal.

Similarly, we define the \emph{finitistic test ideal of $R$ associated to $\cl$} as 
\[\taufg_{\cl}(R)=\bigcap_{\begin{subarray}{c}{N \subseteq M}\\
    \text{$M/N$ f.g.}\end{subarray}} \left(N:N_M^{\cl}\right).\]
In the case where $\cl=\cl_B$ for some $R$-module $B$, we denote this ideal by $\taufg_B(R)$.

Note that the big test ideal is always contained in the finitistic test ideal.
\end{definition}

When cl is tight closure, these definitions agree with the the tight closure test ideal as given in \cite[Definition 8.22]{HoHu1}.
As an immediate consequence we get:

\begin{corollary} \label{cor: test ideal = R iff closure is trivial}
Let $\cl$ be a closure operation. Then, the test ideal $\tau_{\cl}(R)$ is equal to $R$ if and only if for every inclusion of $R$-modules $N\subseteq M$, we have $N^{\cl}_M=N$.

Similarly, $\taufg_{\cl}(R)=R$ if and only if for every inclusion of  $R$-modules $N \subseteq M$, with $M/N$ \fg, we have $N^{\cl}_M=N$.
\end{corollary}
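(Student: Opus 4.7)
The plan is to unpack the definitions directly; the statement is essentially formal once one parses what $(N : N^{\cl}_M) = R$ means.

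First I would handle the ``only if'' direction. Assuming $\tau_{\cl}(R) = R$, for every pair $N \subseteq M$ we have $R \subseteq (N :_R N^{\cl}_M)$, which means $1 \cdot N^{\cl}_M \subseteq N$, i.e., $N^{\cl}_M \subseteq N$. Combined with the Extension axiom $N \subseteq N^{\cl}_M$ from the definition of a closure operation, this forces $N^{\cl}_M = N$.

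For the ``if'' direction, suppose $N^{\cl}_M = N$ for all inclusions $N \subseteq M$. Then $(N :_R N^{\cl}_M) = (N :_R N) = R$ for every such pair, and so the intersection defining $\tau_{\cl}(R)$ is the whole ring.

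The finitistic case is identical: in both directions one simply restricts the quantifier to pairs $N \subseteq M$ with $M/N$ finitely generated and repeats the argument verbatim. There is no real obstacle here — the corollary is a rephrasing of the definition of the (big or finitistic) test ideal, and the only subtlety worth mentioning explicitly is the use of the Extension axiom to replace the containment $N^{\cl}_M \subseteq N$ by equality.
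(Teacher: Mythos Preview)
Your proof is correct and matches the paper's approach: the paper simply states this corollary as ``an immediate consequence'' of the definition of the test ideal, with no further argument given. Your unpacking of the definitions is exactly the intended justification.
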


\begin{lemma}
\label{testidealall0}
Let $\cl$ be a closure that is functorial and semi-residual. Then 
\[\tau_{\cl}(R)=\bigcap_{M\text{ an $R$-module}} \Ann_R\left( 0^{\cl}_{M}\right).\]
Additionally,
\[\taufg_{\cl}(R)=\bigcap_{M\text{ a f.g. $R$-module}} \Ann_R\left( 0^{\cl}_{M}\right).\]
\end{lemma}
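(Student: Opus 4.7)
The plan is to apply Lemma~\ref{lemma: colon to annihilators} (colon-to-annihilators) term by term to the intersection defining $\tau_{\cl}(R)$ and then observe that reindexing by quotients $M/N$ exhausts all $R$-modules. Specifically, by Lemma~\ref{lemma: colon to annihilators}, for each pair $N\subseteq M$ the hypotheses of functoriality and semi-residuality yield
\[ (N :_R N^{\cl}_M) \;=\; \Ann_R\!\left(0^{\cl}_{M/N}\right), \]
so that
\[ \tau_{\cl}(R) \;=\; \bigcap_{N\subseteq M} \Ann_R\!\left(0^{\cl}_{M/N}\right). \]

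Next I would show that the index set $\{M/N : N\subseteq M\}$ of quotients traversed on the right-hand side is, up to isomorphism, the class of all $R$-modules. One containment is immediate: taking $N=0$ and letting $M$ vary over all $R$-modules yields $M/N=M$, so $\bigcap_{N\subseteq M}\Ann_R(0^{\cl}_{M/N}) \subseteq \bigcap_{M}\Ann_R(0^{\cl}_M)$. The reverse containment is equally clear because every quotient $M/N$ is itself an $R$-module, and hence its annihilator of $0^{\cl}_{M/N}$ appears in $\bigcap_{M}\Ann_R(0^{\cl}_M)$. Combining these gives
\[ \tau_{\cl}(R) \;=\; \bigcap_{M\text{ an }R\text{-module}} \Ann_R\!\left(0^{\cl}_M\right), \]
which is the first claim.

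For the finitistic test ideal, the same substitution is valid for each pair $N\subseteq M$ with $M/N$ finitely generated. Reindexing by $L:=M/N$, the class $\{M/N : N\subseteq M,\ M/N \text{ f.g.}\}$ coincides with the class of all finitely generated $R$-modules (using $L = L/0$ in one direction, and noting $M/N$ is by hypothesis f.g. in the other). This yields
\[ \taufg_{\cl}(R) \;=\; \bigcap_{M\text{ a f.g.\ }R\text{-module}} \Ann_R\!\left(0^{\cl}_M\right). \]

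I do not foresee any real obstacle here: the only substantive input is Lemma~\ref{lemma: colon to annihilators}, and everything else is bookkeeping about the indexing set. One small subtlety worth noting is that the ``intersection'' is over a proper class of modules rather than a set, but this is harmless because the annihilators are ideals of $R$ and any such intersection is well defined as an ideal (and, since $R$ is Noetherian, attained by a finite subintersection if one wishes to be finitary).
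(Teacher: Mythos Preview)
Your proof is correct and follows essentially the same approach as the paper: apply Lemma~\ref{lemma: colon to annihilators} to each pair $N\subseteq M$ and then reindex the resulting intersection by the quotients $M/N$, which range over all (respectively all finitely generated) $R$-modules. The paper's proof is terser but identical in content.
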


\begin{proof}
Lemma \ref{lemma: colon to annihilators} implies that for any $R$-modules $N \subseteq M$, $\left(N:N^{\cl}_M\right)=\Ann_R\left(0^{\cl}_{M/N}\right)$, and so \[\tau_{\cl}(R)=\bigcap_{M\text{ an $R$-module}} \Ann_R\left( 0^{\cl}_{M}\right).\] 
The second result follows as the intersection will be over all $M/N$ \fg.
\end{proof}

\begin{remark}
The finitistic test ideal could be taken as the intersection over all $R$-modules $N \subseteq M$ where $M$ is \fg. If cl is functorial and semi-residual, then by the proof of Lemma \ref{testidealall0}, this is equal to 
\[\bigcap_{M\text{ a f.g. $R$-module}} \Ann_R\left( 0^{\cl}_{M}\right),\]
and so it is equal to our definition of the finitistic test ideal. In particular, this holds for module closures.
\end{remark}

\begin{corollary}
\label{testidealtrivialonregular}
If $R$ is a regular local ring, and cl is a Dietz closure on $R$, then $\taufg_{\cl}(R)=R$. In this case, if $\cl=\cl_B$ for some big \CM\ module $B$, then $\tau_{\cl}(R)=R$ as well. In fact, if $R$ is a complete local domain, $R$ is regular if and only if $\tau_B(R)=R$ for all big \CM\ modules $B$.
\end{corollary}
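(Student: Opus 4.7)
The three assertions all follow by combining Corollary \ref{cor: test ideal = R iff closure is trivial} (test ideal equals $R$ iff the closure is trivial) with the previously established link between regularity and triviality of Dietz/big \CM\ module closures.

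For the first statement, my plan is to apply Corollary \ref{cor: test ideal = R iff closure is trivial} in its finitistic form: it suffices to show $N_M^{\cl}=N$ for every inclusion $N\subseteq M$ with $M/N$ \fg. Since Dietz closures are functorial and semi-residual, part (a) of Proposition \ref{Closure_properties} lets me replace this condition with $0_{M/N}^{\cl}=0$, i.e.\ triviality of $\cl$ on $0$ inside the \fg\ module $M/N$. But a regular local ring is a domain, so Theorem \ref{trivialiffregular} applies and guarantees exactly this: every Dietz closure on $R$ is trivial on submodules of \fg\ $R$-modules. Hence $\taufg_{\cl}(R)=R$.

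For the second statement, when $\cl=\cl_B$ comes from a big \CM\ module, the stronger Corollary \ref{rmk regular = closures trivial} kicks in: over the regular local ring $R$, every big \CM\ module closure is trivial on submodules of \emph{all} $R$-modules (not only \fg\ ones). Feeding this into Corollary \ref{cor: test ideal = R iff closure is trivial} for the big test ideal yields $\tau_{\cl_B}(R)=R$.

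For the third statement, note first that complete local domains always admit big \CM\ modules (indeed big \CM\ algebras, by \cite{HochsterHunekeInfiniteIntegralExtensions, andre}), so the class under consideration is nonempty. The forward direction is exactly the second statement applied to every big \CM\ module $B$. For the converse, if $\tau_{\cl_B}(R)=R$ for every big \CM\ module $B$, then Corollary \ref{cor: test ideal = R iff closure is trivial} gives that every $\cl_B$ is trivial on all $R$-modules, and Corollary \ref{rmk regular = closures trivial} then forces $R$ to be regular.

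There is no real obstacle here: the proof is a bookkeeping exercise assembling Corollary \ref{cor: test ideal = R iff closure is trivial}, Proposition \ref{Closure_properties}(a), Theorem \ref{trivialiffregular}, and Corollary \ref{rmk regular = closures trivial}. The only point requiring a moment's care is the passage in the first statement from ``$M/N$ \fg'' in the definition of $\taufg$ to ``$M$ \fg'' in the statement of Theorem \ref{trivialiffregular}, which is handled cleanly by the functoriality/semi-residuality reduction to $0_{M/N}^{\cl}=0$.
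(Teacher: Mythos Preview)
Your proof is correct and follows essentially the same route as the paper. The only cosmetic difference is that for the first statement the paper cites Lemma \ref{testidealall0} directly (which already packages the reduction $\taufg_{\cl}(R)=\bigcap_{M\text{ f.g.}}\Ann_R(0^{\cl}_M)$), whereas you unpack that reduction yourself via Proposition \ref{Closure_properties}(a) before invoking Theorem \ref{trivialiffregular}; the substance is identical.
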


\begin{proof}
The first claim follows from the definition of a test ideal, Theorem \ref{trivialiffregular}, and Lemma \ref{testidealall0}: if $R$ is regular and cl is a Dietz closure, cl is trivial on \fg\ $R$-modules, so $\taufg_{\cl}(R)=R$. 
By Corollary \ref{rmk regular = closures trivial}, $R$ is regular if and only if $\cl_B$ is trivial for all big \CM\ modules $B$. The result follows from Corollary \ref{cor: test ideal = R iff closure is trivial}.
\end{proof}

\begin{corollary}
\label{onecmtrivialimpliescmtestideal}
Let $R$ be a local domain with a big \CM\ module $B$ such that $\tau_B(R)=R$ (or $\tau_B^{fg}(R)=R$). Then $R$ is \CM.
\end{corollary}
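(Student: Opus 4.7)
The plan is to reduce the statement to Lemma \ref{onecmtrivialimpliescmclosure}, which already tells us that if $\cl_B$ is trivial on ideals of $R$, then $R$ is Cohen-Macaulay. So the only thing to verify is that the hypothesis $\tau_B(R) = R$ (respectively $\tau_B^{fg}(R) = R$) forces $\cl_B$ to act trivially on the inclusions $I \subseteq R$ for every ideal $I$ of $R$.

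For the big test ideal, this is immediate from Corollary \ref{cor: test ideal = R iff closure is trivial}: $\tau_B(R) = R$ already says that $N^{\cl_B}_M = N$ for every inclusion $N \subseteq M$ of $R$-modules, which in particular applies to $N = I$ and $M = R$. Thus $I^{\cl_B}_R = I$ for every ideal $I$, and Lemma \ref{onecmtrivialimpliescmclosure} finishes the job.

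For the finitistic version, observe that the quotient $R/I$ is finitely generated over $R$ for any ideal $I$, so the pair $(I, R)$ lies in the range of the intersection defining $\tau_B^{fg}(R)$. Hence the second half of Corollary \ref{cor: test ideal = R iff closure is trivial} applies, again giving $I^{\cl_B}_R = I$ for every ideal $I \subseteq R$, and we conclude by Lemma \ref{onecmtrivialimpliescmclosure}. There is no real obstacle here: the work has already been done in the preceding lemma and corollary, and the proof amounts to unwinding the definitions to extract trivial closure on ideals from the statement that the (finitistic) test ideal is the unit ideal.
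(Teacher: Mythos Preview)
Your proof is correct and follows exactly the same route as the paper's own proof: invoke Corollary \ref{cor: test ideal = R iff closure is trivial} to get that $\cl_B$ is trivial on ideals, then apply Lemma \ref{onecmtrivialimpliescmclosure}. You are simply more explicit than the paper in separating out the finitistic case and noting that $R/I$ is finitely generated so that the pair $(I,R)$ falls under the finitistic definition.
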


\begin{proof}
By Corollary \ref{cor: test ideal = R iff closure is trivial}, $\cl_B$ is trivial on ideals of $R$. Hence $R$ is \CM\ by Lemma \ref{onecmtrivialimpliescmclosure}.
\end{proof}

Note that if $R$ is \CM, then $\tau_R(R)=R$, so the converse holds.

 It follows from the definition that $\tau_{\cl}(R) \subseteq \taufg_{\cl}(R)$, leading to the following question that is still open in most cases for the tight closure test ideal.
 
 \begin{question} \label{Big=Small?}
Do the big test ideal and the finitistic test ideal coincide? More specifically, what are the conditions needed on a ring $R$ or on a closure operation $\cl$ so that $\tau_{\cl}(R)=\taufg_{\cl}(R)$? 
\end{question}

The following result answers this question in one special case. We will be able to say more once we prove Proposition \ref{prop: test = ann}, our first result giving an alternate definition of the test ideal.

\begin{proposition}
Let $\cal{B}$ be a directed family of flat $R$-algebras, or a single flat $R$-module $B$. Then $\tau_{\cal{B}}(R)=\taufg_{\cal{B}}(R)$.
\end{proposition}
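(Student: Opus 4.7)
The plan is to establish the nontrivial inclusion $\taufg_{\mathcal{B}}(R) \subseteq \tau_{\mathcal{B}}(R)$, since the reverse is immediate from the definitions. Throughout, I would pass via Lemma \ref{testidealall0}, which (since $\cl_B$ is functorial and semi-residual by Lemma \ref{residuallemma}) rewrites
\[\tau_{\cl_B}(R)=\bigcap_{M}\Ann_R\!\left(0^{\cl_B}_M\right),\qquad \taufg_{\cl_B}(R)=\bigcap_{M\text{ f.g.}}\Ann_R\!\left(0^{\cl_B}_M\right),\]
so the goal becomes: if $c$ kills $0^{\cl_B}_M$ for every \fg\ $M$, then $c$ kills $0^{\cl_B}_M$ for every $M$.

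First I would reduce the directed-family case to the case of a single flat module. Given a directed family $\{B_i\}_{i\in I}$ of flat $R$-algebras, the direct limit $B=\varinjlim B_i$ is again a flat $R$-algebra (flatness is preserved under filtered colimits). Because tensor products commute with direct limits and an element of $\varinjlim(B_i\otimes_R M)$ lies in the image of $\varinjlim(B_i\otimes_R N)$ iff it does so at some finite stage, one checks that $N^{\cl_B}_M=\bigcup_i N^{\cl_{B_i}}_M=N^{\cl_{\mathcal{B}}}_M$; thus $\cl_{\mathcal{B}}=\cl_B$ for a single flat module $B$, and it suffices to treat that case.

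The main ingredient is a \emph{persistence-to-submodules} property for flat module closures: if $B$ is flat, $L\subseteq M$ are $R$-modules, and $u\in L$ satisfies $u\in 0^{\cl_B}_M$, then $u\in 0^{\cl_B}_L$. This is immediate, because flatness of $B$ makes $B\otimes_R L\hookrightarrow B\otimes_R M$ injective; if $b\otimes u=0$ in $B\otimes_R M$ for every $b\in B$, the same holds in $B\otimes_R L$. With this in hand, fix $c\in\taufg_{\cl_B}(R)$ and an arbitrary $R$-module $M$, and let $u\in 0^{\cl_B}_M$. Applying persistence with $L=Ru$, a cyclic (hence \fg) submodule, gives $u\in 0^{\cl_B}_{Ru}$, and the finitistic hypothesis yields $cu=0$. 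Hence $c\in\Ann_R(0^{\cl_B}_M)$ for every $M$, so $c\in\tau_{\cl_B}(R)$, completing the proof.

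The one step that demands a little care is the directed-family reduction, as it hinges on the commutation of tensor products with filtered colimits and on recognizing that $\cl_{\mathcal{B}}$, a priori defined as a sum of closures (Remark \ref{rmk: closure for family}), coincides with $\cl_B$ for the colimit algebra $B$. Once that identification is in place the argument collapses to the observation about flatness and cyclic submodules, which is the genuine mathematical content.
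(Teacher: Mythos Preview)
Your proof is correct and rests on the same key insight as the paper's: flatness of $B$ makes $\cl_B$ \emph{hereditary}, so that membership in $0^{\cl_B}_M$ descends to any submodule containing the element in question. The packaging, however, differs in two respects. First, for the directed-family case the paper does not pass to the colimit algebra; instead, given $u\in N_M^{\cl_{\mathcal{B}}}$ it simply selects a single $B\in\mathcal{B}$ with $u\in N_M^{\cl_B}$ (using directedness) and then applies the hereditary property of that $B$. Your colimit reduction is a legitimate alternative that trades this per-element choice for a one-time identification $\cl_{\mathcal{B}}=\cl_{\varinjlim B_i}$. Second, the paper introduces an auxiliary operation $\clfg$ (closure witnessed inside some $M_0\supseteq N$ with $M_0/N$ finitely generated), proves $\cl=\clfg$ via the hereditary property, and then derives the test-ideal equality from that; you bypass this formalism entirely by working with the annihilator description of Lemma~\ref{testidealall0} and descending all the way to the cyclic submodule $Ru$. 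Your route is slightly more direct for the statement at hand, while the paper's $\cl=\clfg$ is a marginally stronger intermediate result that could be reused elsewhere.
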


\begin{proof}
Let $\cl=\cl_{\cal{B}}$, and $\clfg$ denote 
the closure given by:
\[u \in N_M^{\clfg} \text{ if for some } N \subseteq M_0 \subseteq M, M_0/N \text{ \fg, } u \in N_{M_0}^{\cl}.\]

 We claim that $\clfg = \cl$. To see that $\clfg \subseteq \cl$, note that by part (f) of Proposition \ref{Closure_properties}, for any $N \subseteq M_0 \subseteq M$,
 \[N_{M_0}^{\cl} \subseteq N_M^{\cl}.\]

For the other inclusion, suppose that $u \in N_M^{\cl}$. Then there is some $B \in \cal{B}$ such that $u \in N_M^{\cl_B}$. Since $B$ is a flat $R$-module, by \cite[Proposition III.12]{RGthesis},
$\cl_B$ is hereditary, i.e. for any $N \subseteq M_0 \subseteq M$, $N_M^{\cl_B} \cap M_0=N_{M_0}^{\cl_B}$. We have
\[\bigcup_{\begin{subarray}{c}{N \subseteq M_0 \subseteq M}\\
    \text{$M_0/N$ f.g.}\end{subarray}} N_{M_0}^{\cl_B}
    = \bigcup_{\begin{subarray}{c}{N \subseteq M_0 \subseteq M}\\
    \text{$M_0/N$ f.g.}\end{subarray}} \left(N_M^{\cl_B} \cap M_0\right)
    =N_M^{\cl_B} \bigcap \left( \bigcup_{\begin{subarray}{c}{N \subseteq M_0 \subseteq M}\\
    \text{$M_0/N$ f.g.}\end{subarray}} M_0\right).
    \]
Note that $M/N$ is the union of its \fg\ submodules, so $M$ can be written as the union of the $M_0$ above. Hence the final step is equal to $N_M^{\cl_B} \cap M=N_M^{\cl_B}.$ This implies that $u \in N_{M_0}^{\cl_B}$ for some $N \subseteq M_0 \subseteq M$ with $M_0/N$ \fg. Hence $u \in N_{M}^{\clfg}$.

Now we show that $\tau_{\cal{B}}(R)=\taufg_{\cal{B}}(R)$. The forward inclusion always holds. For the reverse inclusion, suppose that $u \in \taufg_{\cal{B}}(R)$. We would like to show that for arbitrary $R$-modules $N \subseteq M$, $uN_M^{\cl} \subseteq N$.
Since $N_M^{\cl}=N_M^{\clfg}$, for every $v \in N_M^{\cl}$, there is some $N \subseteq M_0 \subseteq M$ with $M_0/N$ \fg\ such that $v \in N_{M_0}^{\cl}$. Hence $uv \in N$. This implies that $u \in \tau_{\cal{B}}(R)$, which gives us the result.
\end{proof}

\begin{proposition} \label{prop: test = ann}
Let $\cl$ be a closure on a local ring $(R,\m,k)$ satisfying the first two Dietz axioms, functoriality and semi-residuality, and $E=E_R(k)$ be the injective hull of the residue field $k$. Let $\tau_{\cl}(R)$ denote the big test ideal associated to $\cl$. Then $\tau_{\cl}(R)=\Ann_R \left(0^{\cl}_{E}\right)$. Additionally, $\taufg_{\cl}(R)$ is the annihilator of 
\[0^{\clfg}_E=\{u \in E \mid \text{ for some \fg\ } E' \subseteq E, u \in 0^{\cl}_{E'}\}.\]
\end{proposition}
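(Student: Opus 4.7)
The strategy is a standard Matlis-duality style argument, using Lemma \ref{testidealall0} to rewrite
\[\tau_{\cl}(R)=\bigcap_{M}\Ann_R(0^{\cl}_M), \qquad \taufg_{\cl}(R)=\bigcap_{M\text{ f.g.}}\Ann_R(0^{\cl}_M),\]
which makes the containments $\tau_{\cl}(R)\subseteq \Ann_R(0^{\cl}_E)$ and (in the finitistic case, combined with the argument below) $\taufg_{\cl}(R)\subseteq \Ann_R(0^{\clfg}_E)$ immediate, since $E$, respectively each f.g.\ $E'\subseteq E$, appears among the modules indexing the intersection.

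For the reverse containment I would exploit the fact that over a local ring $(R,\m,k)$ the injective hull $E=E_R(k)$ is an injective cogenerator for the whole category of $R$-modules. Indeed, given any $R$-module $M$ and any nonzero $u\in M$, one has $Ru\cong R/\Ann_R(u)$ with $\Ann_R(u)\subsetneq R$ contained in $\m$, so the composition
\[R/\Ann_R(u)\twoheadrightarrow R/\m=k\hookrightarrow E\]
is nonzero at the class of $u$, and by injectivity of $E$ it extends to some $\phi\in\Hom_R(M,E)$ with $\phi(u)\ne 0$. Note that no finite-generation hypothesis on $M$ is needed here.

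Given this, suppose $c\in\Ann_R(0^{\cl}_E)$, fix an arbitrary $R$-module $M$ and any $u\in 0^{\cl}_M$, and assume for contradiction that $cu\ne 0$. Pick $\phi:M\to E$ with $\phi(cu)\ne 0$. By functoriality applied to $\phi$, we have $\phi(0^{\cl}_M)\subseteq 0^{\cl}_E$, so $\phi(u)\in 0^{\cl}_E$; but then $c\phi(u)=0$ while $\phi(cu)=c\phi(u)\ne 0$, a contradiction. Hence $cu=0$, so $c\in\Ann_R(0^{\cl}_M)$ for every $M$, giving $c\in\tau_{\cl}(R)$.

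For the finitistic statement, the forward inclusion $\taufg_{\cl}(R)\subseteq\Ann_R(0^{\clfg}_E)$ is immediate from the definition of $0^{\clfg}_E$: if $u\in 0^{\cl}_{E'}$ for some f.g.\ $E'\subseteq E$ and $c\in\taufg_{\cl}(R)$, then $c\in\Ann_R(0^{\cl}_{E'})$ so $cu=0$. For the reverse, suppose $c\in\Ann_R(0^{\clfg}_E)$, let $M$ be f.g., and let $u\in 0^{\cl}_M$. Running the Matlis argument above with $M$ f.g., the image $\phi(M)\subseteq E$ is itself f.g., and functoriality of $\cl$ (applied to the surjection $M\twoheadrightarrow\phi(M)$) gives $\phi(u)\in 0^{\cl}_{\phi(M)}$, so $\phi(u)\in 0^{\clfg}_E$; the assumption on $c$ then yields $c\phi(u)=\phi(cu)=0$, again forcing $cu=0$. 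I do not anticipate a real obstacle here; the only technical point meriting care is that the cogenerator property and functoriality must be applied at the level of arbitrary (respectively f.g.) modules $M$, which is why we need the full strength of the semi-residuality/functoriality axioms rather than just their effect on cyclic modules.
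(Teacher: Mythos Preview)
Your proof is correct. Both your argument and the paper's start from Lemma~\ref{testidealall0}, and the forward inclusions are handled identically. The difference lies in the reverse inclusion: the paper follows the classical Hochster--Huneke route of choosing, for a putative element $x\in 0^{\cl}_M$ with $cx\ne 0$, a submodule $N\subseteq M$ maximal with respect to $cx\notin N$, so that $M/N$ is an essential extension of $k$ and hence \emph{embeds} in $E$; one then invokes Proposition~\ref{Closure_properties}(f) to compare $0^{\cl}_{M/N}$ with $0^{\cl}_E$. You instead use directly that $E$ is an injective cogenerator to produce a map $\phi\colon M\to E$ not killing $cu$, and apply functoriality to $\phi$ (respectively to the surjection $M\twoheadrightarrow\phi(M)$ in the finitistic case). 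Your route is a little more streamlined: it avoids the maximal-submodule/essential-extension step entirely and uses only functoriality, whereas the paper's embedding argument lets one cite the inclusion property (f) rather than full functoriality at that point. One small remark: your closing comment about needing ``the full strength of the semi-residuality/functoriality axioms'' slightly overstates the role of semi-residuality in your core argument---semi-residuality enters only through Lemma~\ref{testidealall0}, while the cogenerator step itself uses just functoriality.
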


\begin{proof}{C.f. \cite[Proposition 8.23]{HoHu1}.} 
By Lemma \ref{testidealall0}, 
\[\tau_{\cl}(R)=\bigcap_{M\text{ an $R$-module}} \Ann_R\left( 0^{\cl}_{M}\right).\]
We now show that $\bigcap_M \Ann_R 0^{\cl}_M=\Ann_R 0^{\cl}_{E}$. That the first is contained in the second is clear. For the other inclusion let $u\in R-\{0\}$ such that $u0^{\cl}_{E}=0$, and let $M$ be an $R$-module such that $u0^{\cl}_{M} \ne 0$. Then there is some $x \in 0^{\cl}_{M} \subseteq M$ such that $ux \ne 0$ in $M$. Choose $N \subseteq M$ maximal with respect to not containing $ux$. Replace $M$ by $M/N$ and $x$ by $\bar{x}$. By \cite[Lecture of September 17]{fndtc}, every \fg\ submodule of $M$ has finite length and $ux$ spans its socle. Hence $ux$ spans the socle of $M$, and so $Rux\cong kux \cong k$, and $M$ is an essential extension of this copy of $k$. 
Hence we can embed $M$ in $E$, and so by part (f) of Proposition \ref{Closure_properties},
 \[ u\cdot 0^{\cl}_{M} \subseteq u\cdot 0^{\cl}_{E}=0, \] which contradicts our choice of $x$. The result follows. 

Now we show that $\taufg_{\cl}(R)=\Ann_R 0^{\clfg}_E$. We have 
\[\taufg_{\cl}(R)= \bigcap_{M \text{ f.g.}} \Ann_R(0^{\cl}_M).\]
To see that this is contained in $\Ann_R 0^{\clfg}_E$, notice that every element $v \in 0^{\clfg}_E$ is contained in $0^{\cl}_{E'}$ for some \fg\ $E' \subseteq E$. So an element $u \in R$ that kills $0^{\cl}_M$ for every \fg\ $R$-module $M$ will kill $v$. Hence $\taufg_{\cl}(R) \subseteq \Ann_R 0^{\clfg}_E$. For the reverse inclusion, let $u \in R-\{0\}$ such that $u 0^{\clfg}_E=0$, and let $M$ be a \fg\ $R$-module such that $u0_M^{\cl} \ne 0$. The rest of the argument follows as for the non-\fg\ case, with the addition to the last line that since $M$ is \fg, $u0_M^{\cl} \subseteq u0_E^{\clfg}=0$.
\end{proof}

Using this alternative description of the test ideal, we give an additional partial answer to Question \ref{Big=Small?}. This result is the module-closure version of Theorem 3.1 of \cite{HoHuStrongFRegularity} or the notes of October 22nd and 24th of \cite{fndtc}.

\begin{proposition}
\label{weaklyimpliesstrongly}
Let $R$ be a Gorenstein local ring, and $B$ any $R$-algebra or \fg\ $R$-module. Then $\taufg_B(R)=R$ if and only if $\tau_B(R)=R$.
\end{proposition}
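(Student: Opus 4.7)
The forward implication is immediate since $\tau_B(R) \subseteq \taufg_B(R)$ by definition, so the content is to prove that $\taufg_B(R) = R$ forces $\tau_B(R) = R$. By Lemma \ref{testidealall0} the hypothesis says $0^{\cl_B}_M = 0$ for every \fg\ $R$-module $M$, and by Proposition \ref{prop: test = ann} the conclusion reduces to showing $0^{\cl_B}_E = 0$, where $E = E_R(k)$.

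The plan is to exploit the Gorenstein hypothesis through the standard presentation $E \cong \varinjlim_t M_t$, with $M_t = R/(x_1^t,\ldots,x_d^t)$ for a fixed system of parameters $x_1,\ldots,x_d$ and transition maps given by multiplication by $x_1 \cdots x_d$. Each $M_t$ is \fg, and since tensor commutes with direct limits, $B \otimes_R E \cong \varinjlim_t (B \otimes_R M_t)$. Given $v \in 0^{\cl_B}_E$, I will choose $t$ and $\bar v \in M_t$ whose image in $E$ is $v$; the idea is to push $\bar v$ up the directed system until some iterate $(x_1 \cdots x_d)^{t'-t}\bar v$ lies in $0^{\cl_B}_{M_{t'}}$, and then invoke the hypothesis to force this iterate, and hence $v$, to vanish.

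Concretely, in the algebra case $v \in 0^{\cl_B}_E$ is equivalent to $1_B \otimes v = 0$ in the colimit, so some $t' \geq t$ yields $1_B \otimes (x_1 \cdots x_d)^{t'-t}\bar v = 0$ in $B \otimes_R M_{t'}$; this places $(x_1 \cdots x_d)^{t'-t}\bar v$ in $0^{\cl_B}_{M_{t'}} = 0$, and since this element maps to $v$ in $E$ we conclude $v = 0$. In the \fg\ module case, I do the same with each element of a finite generating set $b_1,\ldots,b_n$ of $B$, obtaining indices $t_i$, and take $t' = \max_i t_i$; $R$-linearity then extends the vanishing of $b_i \otimes (x_1 \cdots x_d)^{t'-t}\bar v$ from generators to all $b \in B$, and the argument concludes as before.

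The main obstacle — and the reason $B$ must be an algebra or \fg — is that the index $t'$ witnessing $b \otimes \bar v = 0$ in the colimit a priori depends on $b$. The algebra case sidesteps this with the single test element $1_B$, and the \fg\ module case handles it by a finite maximum over generators; for arbitrary $B$ neither trick is available.
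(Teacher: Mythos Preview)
Your proof is correct and follows essentially the same approach as the paper's: both use the Gorenstein presentation $E \cong \varinjlim_t R/I_t$, pull $v$ back to a representative, chase the vanishing of $b_i \otimes v$ (or $1_B \otimes v$) through the colimit to a finite stage, and take the maximum index over the finitely many generators. The only cosmetic difference is that the paper phrases the key step as $uy^r \in (I_{t+r})^{\cl_B}_R = I_{t+r}$, while you phrase it as $(x_1\cdots x_d)^{t'-t}\bar v \in 0^{\cl_B}_{M_{t'}} = 0$; these are equivalent by the semi-residual property of module closures (Proposition~\ref{Closure_properties}(a)).
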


\begin{proof}
We always have $\tau_B(R) \subseteq \taufg_B(R)$, so the reverse direction holds without the Gorenstein assumption on $R$. For the forward statement,  denote $\cl_B$ by $\cl$, and suppose that $\taufg_B(R)=R$. Then, $I^{\cl}_R=I$ for all ideals $I$ of $R$.

Let $x_1,\ldots,x_d$ be a \sop\ on $R$, and $I_t=(x_1^t,x_2^t,\ldots,x_d^t)$. Since $R$ is Gorenstein local, we have $E_R(k)=\varinjlim_t R/I_t$, where the maps $R/I_t \to R/I_{t+1}$ are given by multiplication by $y=x_1 \cdots x_d$. 
Using the notation of \cite[Lecture of October 24th]{fndtc}, let us denote the equivalence class of an element of $R$ under the composition $R \twoheadrightarrow R/I_t \hookrightarrow E$ by $(u;I_t)$. So $(u;I_t)=(uy^r;I_{t+r})$. 
Suppose that some element $v=(u;I_t) \in E$ is in $0^{\cl}_E$. Let $\{b_1,\ldots,b_n\}$ be a set of generators for $B$ if $B$ is a module, or $\{1\}$ if $B$ is an $R$-algebra. Then for $1 \le i \le n$, $b_i \otimes v=0$ in $B \otimes E$. This holds if and only if for each $i$, there is some $r_i$ such that $b_i \otimes uy^{r_i+s}=0$ in $B \otimes R/I_{t+r_i+s}$ for all $s \ge 0$. Set $r=\text{max}_i \{r_i\}$. Identifying $B \otimes R/I_t$ with $B/I_tB$, this implies that $uy^rb_i \in I_{t+r}B$ for each $i$. But this is exactly equivalent to $uy^r \in (I_{t+r})^{\cl}_R$. Since by assumption $(I_{t+r})^{\cl}_R=I_{t+r}$, we have $uy^r \in I_{t+r}$ for sufficiently large values of $r$. This implies that $v=0$ in $E$. Hence $0^{\cl}_E=0$, and thus $\tau_B(R)=R$.
\end{proof}

We can use the previous result to give a similar result for families.

\begin{corollary}
\label{gorensteinfinitisticandbigcoincide}
Let $R$ be a Gorenstein local ring and $\cal{B}$ a directed family of $R$-algebras or a family of  \fg\  $R$-modules directed under generation. Then $\taufg_\cal{B}(R)=R$ if and only if $\tau_{\cal{B}}(R)=R$. 
\end{corollary}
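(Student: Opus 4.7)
The plan is to reduce the statement to Proposition \ref{weaklyimpliesstrongly} applied to each member of the family, and then reassemble using the structure of $\cl_{\mathcal{B}}$ described in Remark \ref{rmk: closure for family}.

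The reverse direction ($\tau_{\mathcal{B}}(R)=R$ implies $\taufg_{\mathcal{B}}(R)=R$) is immediate from the general containment $\tau_{\cl}(R)\subseteq \taufg_{\cl}(R)$. So I would focus on the forward direction and assume $\taufg_{\mathcal{B}}(R)=R$. First I would observe that for each $B\in\mathcal{B}$, the containment $\cl_B \subseteq \cl_{\mathcal{B}}$ (which is built into the very definition of $\cl_{\mathcal{B}}$ as a sum/join of the $\cl_{B_i}$ in Remark \ref{rmk: closure for family}) reverses at the level of test ideals: $\taufg_{\mathcal{B}}(R)\subseteq \taufg_B(R)$. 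Hence $\taufg_B(R)=R$ for every $B\in\mathcal{B}$.

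Next, since each $B\in\mathcal{B}$ is either an $R$-algebra or a finitely-generated $R$-module, Proposition \ref{weaklyimpliesstrongly} applies to each such $B$ individually, yielding $\tau_B(R)=R$. By Corollary \ref{cor: test ideal = R iff closure is trivial}, this is equivalent to saying that $\cl_B$ is trivial on every inclusion $N\subseteq M$ of $R$-modules.

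The final step is to propagate triviality from the individual $\cl_B$ to the combined closure $\cl_{\mathcal{B}}$. For any $R$-modules $N\subseteq M$ and any $B\in\mathcal{B}$ we now have $N_M^{\cl_B}=N$, so the sum $\sum_{B\in\mathcal{B}} N_M^{\cl_B}$ equals $N$, and iterating this (as in the construction of $\cl_{\mathcal{B}}$ in Remark \ref{rmk: closure for family}, via \cite[Construction 3.1.5]{Eps}) still produces $N$. In the directed case the sum is already idempotent, so the same conclusion holds without iteration. Therefore $N_M^{\cl_{\mathcal{B}}}=N$ for all $N\subseteq M$, and applying Corollary \ref{cor: test ideal = R iff closure is trivial} once more gives $\tau_{\mathcal{B}}(R)=R$.

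The main obstacle I anticipate is making the third step clean: one must be sure that the construction producing $\cl_{\mathcal{B}}$ from the pointwise trivial closures $\cl_B$ really does yield the trivial closure, rather than something larger after iteration. The directed hypothesis (either directly, or under generation for modules) is precisely what guarantees that no enlargement occurs, so invoking Remark \ref{rmk: closure for family} in the correct form is essential. Everything else is a formal consequence of Proposition \ref{weaklyimpliesstrongly} and the monotonicity of the test-ideal assignment $\cl\mapsto\tau_{\cl}(R)$ with respect to $\cl$.
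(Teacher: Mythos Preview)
Your proposal is correct and follows essentially the same approach as the paper: reduce to each $B\in\mathcal{B}$ via the containment $\cl_B\subseteq\cl_{\mathcal{B}}$, apply Proposition~\ref{weaklyimpliesstrongly} to get $\tau_B(R)=R$, and then conclude $\cl_{\mathcal{B}}$ is trivial. The only cosmetic difference is that the paper carries out the last step by working directly with $0^{\cl}_E$ via Proposition~\ref{prop: test = ann} (using directedness to say any $v\in 0^{\cl_{\mathcal{B}}}_E$ lies in some $0^{\cl_B}_E$), whereas you argue that a sum of trivial closures is trivial; these are equivalent uses of the directed hypothesis.
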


\begin{proof}
Let $\cl=\cl_{\cal{B}}$.  The piece we need to prove is that if $\taufg_{\cl}(R)=R$, then $\tau_{\cl}(R)=R$. Suppose that $\taufg_{\cl}(R)=R$.
Let $v \in 0^{\cl}_E$. Then there is some $B \in \cal{B}$ such that $v \in 0^{\cl_B}_E$. For every $B \in \cal{B}$, $\cl_B \subseteq \cl_{\cal{B}}$. Since $\taufg_{\cl}(R)=R$, $\taufg_{\cl_B}(R)=R$. By Proposition \ref{weaklyimpliesstrongly}, this implies that $\tau_{\cl_B}(R)=R$. Hence by Proposition \ref{prop: test = ann} $0^{\cl_B}_E=0$, whch implies that $v=0$. Therefore, $0^{\cl}_E=0$, and so $\tau_{\cl}(R)=R$.
\end{proof}

The following theorem connects test ideals with trace ideals, and is the key component of many of our results. This connects the idea of the test ideal with representation theoretic ideas.

\begin{theorem} \label{thm: test = int}
Let $R$ be local and $\cl=\cl_B$ for some $R$-module $B$. If $B$ is a finitely presented $R$-module or $R$ is complete then \[ \tau_{\cl}(R)=\intr_{B}(R)\]
\end{theorem}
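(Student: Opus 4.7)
The plan is to reduce the statement to a Matlis duality computation using Proposition \ref{prop: test = ann}. Since $\cl_B$ is functorial and semi-residual by Lemma \ref{residuallemma}, Proposition \ref{prop: test = ann} gives $\tau_{\cl_B}(R) = \Ann_R(0^{\cl_B}_E)$, where $E = E_R(k)$. It therefore suffices to prove $\Ann_R(0^{\cl_B}_E) = \intr_B(R)$, and in fact the aim is the sharper identity $0^{\cl_B}_E = \Ann_E(\intr_B(R))$ inside $E$, from which the ideal-level equality follows by standard duality.

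The inclusion $\intr_B(R) \subseteq \tau_{\cl_B}(R)$ (equivalently $\Ann_E(\intr_B(R)) \supseteq 0^{\cl_B}_E$) is a direct computation at the level of closures. Given $f \in \Hom_R(B,R)$, modules $N \subseteq M$, and $u \in N_M^{\cl_B}$, the defining relation $b \otimes u \in \IM(B \otimes N \to B \otimes M)$ for every $b \in B$ pushes forward under $f \otimes \id_M : B \otimes M \to M$ to yield $f(b)u \in N$. Summing over $f$ and intersecting over all $N \subseteq M$ gives $\intr_B(R) \subseteq \tau_{\cl_B}(R)$.

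For the reverse inclusion the key tool is that $E$ is an injective cogenerator of the category of $R$-modules, so the natural evaluation map $B \otimes_R E \hookrightarrow (B \otimes_R E)^{\vee\vee}$ is injective, with $(-)^\vee = \Hom_R(-,E)$. By Hom-tensor adjunction together with $\Hom_R(E,E) = \hat{R}$, one computes $(B \otimes_R E)^\vee \cong \Hom_R(B,\hat{R})$. Under either of the two hypotheses the double dual identifies canonically with $\Hom_R(\Hom_R(B,R),E)$: in the complete case trivially, since $\hat{R} = R$; in the finitely-presented case because $\Hom_R(B,\hat{R}) \cong \hat{R} \otimes_R \Hom_R(B,R)$ (obtained by applying $\Hom_R(-, \hat{R})$ to a finite presentation of $B$ and using flatness of $\hat{R}$), combined with $\Hom_R(\hat{R},E) = E$ through another adjunction. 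Tracking the image of $b \otimes x$ through these identifications shows the injection has the explicit form $b \otimes x \mapsto (f \mapsto f(b)x)$. Consequently $b \otimes x = 0$ in $B \otimes E$ if and only if $f(b)x = 0$ in $E$ for every $f \in \Hom_R(B,R)$, which is exactly the assertion $0^{\cl_B}_E = \Ann_E(\intr_B(R))$.

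To finish, apply the general fact that $\Ann_R(N^\vee) = \Ann_R(N)$ for any $R$-module $N$ (which again uses only the cogenerator property of $E$: if $r \cdot N^\vee = 0$ then $r \cdot N^{\vee\vee} = 0$, and $N \hookrightarrow N^{\vee\vee}$ forces $rN = 0$) to $N = R/\intr_B(R)$. This yields $\Ann_R(\Ann_E(\intr_B(R))) = \Ann_R(R/\intr_B(R)) = \intr_B(R)$, completing the proof. The main delicate point in this plan is the case split: the identification $(B \otimes_R E)^{\vee\vee} \cong \Hom_R(\Hom_R(B,R), E)$ with the stated explicit form really does require either $\hat{R} = R$ or the finite-presentation isomorphism $\Hom_R(B,\hat{R}) \cong \hat{R} \otimes_R \Hom_R(B,R)$, so care is needed to verify that both hypotheses feed into the same final formula.
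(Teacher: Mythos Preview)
Your overall strategy is sound and closely parallels the paper's: both reduce to $\Ann_R(0^{\cl_B}_E)$ via Proposition~\ref{prop: test = ann} and then invoke Matlis duality, and your inclusion $\intr_B(R)\subseteq\tau_{\cl_B}(R)$ via the pushforward $f\otimes\id_M$ is correct and a bit more direct than the paper's treatment of that direction. However, there is a real error in the finitely-presented case: the claim $\Hom_R(\hat R,E)=E$ is false whenever $R$ is not complete. Dualizing $0\to R\to\hat R\to\hat R/R\to 0$ into the injective cogenerator $E$ gives $0\to\Hom_R(\hat R/R,E)\to\Hom_R(\hat R,E)\to E\to 0$, and $\Hom_R(\hat R/R,E)=0$ forces $\hat R/R=0$. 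You may be thinking of the adjunction $\Hom_{\hat R}(\hat R\otimes_R M,E)\cong\Hom_R(M,E)$, but that needs $\Hom_{\hat R}$ on the left, not $\Hom_R$. So your identification $(B\otimes_R E)^{\vee\vee}\cong\Hom_R(\Hom_R(B,R),E)$ breaks down here.

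The repair is minor and keeps your argument intact. You correctly have $(B\otimes_R E)^\vee\cong\Hom_R(B,\hat R)$, so the double-dual embedding sends $b\otimes x$ to the functional $g\mapsto g(b)\cdot x$ on $\Hom_R(B,\hat R)$, using the $\hat R$-action on $E$. If $f(b)x=0$ for every $f\in\Hom_R(B,R)$, then for any $g\in\Hom_R(B,\hat R)\cong\hat R\otimes_R\Hom_R(B,R)$ write $g=\sum s_i f_i$ with $s_i\in\hat R$, $f_i\in\Hom_R(B,R)$; then $g(b)\cdot x=\sum s_i(f_i(b)x)=0$. Hence $b\otimes x$ vanishes in the double dual and therefore in $B\otimes_R E$, yielding $0^{\cl_B}_E=\Ann_E(\intr_B(R))$ as you wanted. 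This is exactly the role the finite-presentation base-change isomorphism plays in the paper's proof as well; the paper packages the duality through an explicit short exact sequence after first using that $E$ is Artinian to reduce to finitely many $b_i$, while your double-dual formulation avoids that reduction and is somewhat more streamlined once the above correction is made.
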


\begin{proof}

Let $E=E_R(k)$ be the injective hull of the residue field $k$ of $R$. By Proposition \ref{prop: test = ann}, $\tau_{\cl}(R)=\Ann_R(0^{\cl}_E)=(0:0^{\cl}_E)$; hence $c\in \tau_{\cl}(R)$ if and only if $c\cdot 0^{\cl}_E=0$, but 
\[0^{\cl}_E=\bigcap_{b \in B} \ker(E\to B\otimes E),\] 
where the map $E \to B \otimes E$ corresponding to $b \in B$ is given by $e \mapsto b \otimes e$.
Since $E$ is Artinian, there are elements $b_1,\ldots,b_n \in B$ such that this is equal to 
\[\bigcap_{b\in \{b_1,\ldots,b_n\}} \ker(E \to B \otimes E).\] 
We can rewrite this as $\ker(\phi)$, where $\phi=(\phi_1,\ldots,\phi_n):E \to (B \otimes E)^{\oplus n}$ sends 
\[e \mapsto (b_1 \otimes e,b_2 \otimes e,\ldots,b_n \otimes e).\] 

First, suppose that $c \in \tau_{\cl}(R)$, so that $c \cdot 0_E^{\cl}=c\ker(\phi)=0$. Then \[0^{\cl}_E\subseteq \Ann_E(c),\] and by Matlis duality the map  \[\frac{\hat{R}}{c\hat{R}}=\Hom_{\hat{R}}(\Ann_E(c),E) \to \Hom_{\hat{R}}(0^{\cl}_E,E)\] is surjective. But applying $\Hom_R(\_,E)$ to the exact sequence \[0\to \ker(\phi)  \to E \xrightarrow{\phi} (B\otimes E)^{\oplus n} \] gives \[ \Hom_{\hat{R}}(0^{\cl}_E,E)=\frac{\hat{R}}{\sum_{i=1}^n \IM(\Hom_{\hat{R}}(B\otimes \hat{R},\hat{R})\to \hat{R})},\] where $i$th map $\Hom_{\hat{R}}(B\otimes \hat{R},\hat{R})\to \hat{R}$ is given by $\phi\mapsto \phi(b_i)$. From the surjection $\hat{R}/c\hat{R}\to \Hom_{\hat{R}}(0_E^{\cl},E)$ we can now conclude that 
\[c \hat{R} \subseteq \sum_{i=1}^n \IM(\Hom_{\hat{R}}(B\otimes \hat{R},{\hat{R}})\to \hat{R}) .\] 

In the complete case, the denominator is contained in $\intr_{B}(R)$, so this implies that $cR \subseteq \intr_{B}(R)$. In the case that $B$ is finitely presented, since $\Hom$ commutes with flat base change, the last expression is equal to 

\[ \left(\sum_{i=1}^n \IM(\Hom_{R}(B,{R})\to R) \right) \otimes \hat{R} \]

It then follows by the faithful flatness of completion that 
\[c \in \sum_{i=1}^n \IM(\Hom_{R}(B,{R})\to R) \subseteq \intr_{B}(R).\]

For the reverse containment, suppose that $c \in \intr_{B}(R)$. Then there are $b'_1,\ldots,b'_m$ such that 
\[c \in \sum_{i=1}^m \IM(\Hom_R(B,R) \to R),\]
 where the $i$th map $\Hom_R(B,R) \to R$ sends $f \mapsto f(b'_i)$. We can enlarge the set $b_1,\ldots,b_n$ from the setup to include $b'_1,\ldots,b'_m$. Then 
 \[c \in \sum_{i=1}^n \IM(\Hom_R(B,R) \to R).\] Hence we have a surjection 
 \[R/cR \to \frac{R}{\sum_{i=1}^n \IM(\Hom_R(B,R))}.\]
  Applying $\Hom_R(\_,E)$, we get an injection
\[ \Hom_R\left(\frac{R}{\sum_{i=1}^n \IM(\Hom_R(B,R) \to R)},E\right) \hookrightarrow \Hom_R(R/cR,E)=\Ann_E c.\] But the module on the left is $0_E^{\cl}$. Hence $c \in \Ann_R 0_E^{\cl}$, which is equal to $\tau_{\cl}(R)$.

\end{proof}

\begin{remark}
The second direction of the previous theorem works in greater generality; in particular it shows that for any local ring $R$ (not necessarily complete) and any $R$-module $B$ (not necessarily \fg) we have \[\intr_B(R)\subseteq \tau_B(R).\]
\end{remark}

\begin{remark}
\label{noncompletecounterexample}
The following example shows that when $R$ is not complete and $B$ is not finitely presented the trace ideal may differ from the test ideal.

We start with \cite[Example 4.5.1]{DattaSmithFrobeniusAndValuationRings} which allows us to build a DVR $V$ whose fraction field is $\mathbb{F}_p(x,y)$. In this case $V$ is a Noetherian, regular ring of dimension 1, which is not $F$-finite. By \cite[Lemma 2.4.2]{DattaSmithFrobeniusAndValuationRings} this implies that $\Hom_V(V^{1/p},V)=0$, hence we have $\intr_{V^{1/p}}(V)=0$. 
On the other hand, as $V$ is a regular ring of dimension one, it is a domain. Hence  $V^{1/p}$ is torsion-free. Additionally, $mV^{1/p}\neq V^{1/p}$, so $V^{1/p}$ is a \CM\ module. This implies that $\tau_{V^{1/p}}(V)=V \ne 0$.
[Note: The paper as originally published has an error, which the authors corrected in an erratum, but the example and the lemma we are using are correct.]

\end{remark}

The following results use Theorem \ref{thm: test = int} to extend our knowledge of test ideals and closure operations, and in particular give an important case when the test ideal is nonzero. First we recall a definition:

\begin{definition}[{\cite{solidclosure}}]
Let $R$ be a domain. An $R$-module $B$ is solid if $\Hom_R(B,R)$ is nonzero.
\end{definition}

\begin{corollary}
\label{solidhasnontrivialtestideal}
If $R$ is local, $\cl=\cl_B$ for some solid $R$-module $B$, and either $R$ is complete or $B$ is \fg,
then we have $\tau_{\cl} (R)\neq 0$. 
Consequently, $\taufg_{\cl}(R) \ne 0$ as well.

In particular, if $R$ is a complete local domain and $B$ is a big \CM\ $R$-module, then $\tau_{B}(R) \ne 0$.
\end{corollary}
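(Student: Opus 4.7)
The plan is to derive everything from Theorem \ref{thm: test = int}. Under either of the hypotheses on $R$ and $B$---$R$ complete, or $B$ finitely generated (and hence, since $R$ is Noetherian, finitely presented)---that theorem gives the identification
\[\tau_{\cl}(R) = \intr_{B}(R) = \sum_{f \in \Hom_R(B,R)} f(B).\]
So the task immediately reduces to checking that this trace ideal is nonzero.

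For this, I would invoke solidity directly. By definition, $B$ solid means $\Hom_R(B,R) \neq 0$, so there exists some nonzero $R$-linear map $f \colon B \to R$. Its image $f(B)$ is then a nonzero submodule of $R$ contained in $\intr_{B}(R)$, so $\intr_{B}(R) \neq 0$, and therefore $\tau_{\cl}(R) \neq 0$. The corresponding statement for $\taufg_{\cl}(R)$ follows at once from the containment $\tau_{\cl}(R) \subseteq \taufg_{\cl}(R)$ that is built into the definitions.

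For the ``in particular'' clause, all that remains is to verify that a big Cohen--Macaulay module $B$ over a complete local domain $R$ of dimension $d$ is automatically solid. This is a classical observation of Hochster from the theory of solid closure: the big CM hypothesis guarantees $B/\underline{x}B \neq 0$ for any system of parameters $\underline{x}$, which via the \v{C}ech/Koszul description forces the top local cohomology $H^{d}_{\m}(B)$ to be nonzero; Hochster's criterion over a complete local domain then yields $\Hom_R(B,R) \neq 0$. With this input the previous paragraph applies and delivers $\tau_{B}(R) \neq 0$.

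In short, there is no real technical obstacle here: the proof is essentially a one-line application of Theorem \ref{thm: test = int} once solidity has been translated into the nonvanishing of some map $B \to R$. The only ingredient borrowed from outside the paper is Hochster's solidity result for big CM modules over complete local domains, which is quoted rather than reproved.
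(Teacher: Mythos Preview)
Your proposal is correct and follows essentially the same route as the paper: apply Theorem \ref{thm: test = int} to identify $\tau_{\cl}(R)$ with $\intr_B(R)$, observe that solidity gives a nonzero map $B\to R$ and hence a nonzero trace ideal, and for the final clause invoke Hochster's result that a big Cohen--Macaulay module over a complete local domain is solid. The only difference is cosmetic: the paper simply cites the solidity of big CM modules, whereas you sketch the local-cohomology argument behind it.
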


\begin{proof}
Assume that $\cl=\cl_B$ for some solid $R$-module $B$. Since $\tau_{\cl}(R)=\intr_{B}(R)$, and there is a nonzero map $B \to R$, $\tau_{\cl}(R) \ne 0$.

If $R$ is a complete local domain, then $B$ is solid \cite[Lecture of September 7th]{fndtc}, and the last statement follows.
\end{proof}

\begin{corollary}
Let cl be a Dietz closure and $R$ a complete local domain. Then $\taufg_{\cl}(R) \neq 0$.
\end{corollary}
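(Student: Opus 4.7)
The plan is to reduce the statement to the big Cohen-Macaulay module case already handled by Corollary \ref{solidhasnontrivialtestideal}, by invoking Dietz's characterization theorem. Dietz's main theorem in \cite{DietzGeoffreyD_A_characterization_of_closure_operations_that_induce_BCM_modules} is stronger than the existence statement quoted in the text: for any Dietz closure $\cl$ on a complete local domain $R$, his construction yields a big \CM\ module $B$ such that $\cl$ is dominated by $\cl_B$, that is, $N^{\cl}_M \subseteq N^{\cl_B}_M$ for every pair $N \subseteq M$ of (finitely generated) $R$-modules. I would open the proof by citing this.

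Once such a $B$ is in hand, the argument is purely formal. The test ideal is antitone in the closure: for each inclusion $N \subseteq M$ the containment $N^{\cl}_M \subseteq N^{\cl_B}_M$ yields $(N :_R N^{\cl}_M) \supseteq (N :_R N^{\cl_B}_M)$, and intersecting only over pairs with $M/N$ finitely generated gives
\[
\taufg_{\cl}(R) \;\supseteq\; \taufg_{\cl_B}(R) \;\supseteq\; \tau_{\cl_B}(R).
\]
So it suffices to show the right-hand term is nonzero.

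For that I would apply Corollary \ref{solidhasnontrivialtestideal}: since $R$ is a complete local domain and $B$ is a big \CM\ $R$-module, $B$ is a solid $R$-module by \cite[Lecture of September 7th]{fndtc}, and the corollary (via Theorem \ref{thm: test = int}, identifying the test ideal with the trace ideal $\intr_B(R)$) gives $\tau_{\cl_B}(R) \neq 0$. Chasing through the chain of containments yields $\taufg_{\cl}(R) \neq 0$.

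The main obstacle is confirming that Dietz's characterization produces the desired pointwise domination $\cl \subseteq \cl_B$ and not merely the abstract existence of \emph{some} big \CM\ module over $R$. This domination is in fact built into Dietz's construction, which manufactures $B$ out of the module modifications governed by the Dietz axioms applied to $\cl$; the resulting $B$ has the property that any element forced into a closure by the Dietz axioms for $\cl$ is already in $\cl_B$. I would include a sentence pointing to this aspect of Dietz's argument, so that the reader does not mistakenly interpret the earlier quoted form of the result as only an existence statement.
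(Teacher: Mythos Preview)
Your approach is essentially identical to the paper's: obtain a big \CM\ module $B$ with $N_M^{\cl}\subseteq N_M^{\cl_B}$ for finitely generated $N\subseteq M$, use antitonicity of the (finitistic) test ideal, and invoke Corollary~\ref{solidhasnontrivialtestideal}. The one point worth correcting is the citation for the domination $\cl\subseteq\cl_B$: the paper attributes this to \cite{RGRebecca_Closures_Big_CM_modules_and_singularities} (specifically Theorem~5.1 there), not to Dietz's original paper, so your flagged ``main obstacle'' is resolved by citing R.G.\ rather than by arguing that it is implicit in Dietz's construction.
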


\begin{proof}
By \cite{RGRebecca_Closures_Big_CM_modules_and_singularities}, there is a big \CM\ module $B$ such that for all \fg\ $R$-modules $N \subseteq M$, $N_M^{\cl} \subseteq N_M^{\cl_B}$.

Since $B$ is solid over $R$, $\taufg_{\cl_B}(R) \ne 0$. Since $N_M^{\cl} \subseteq N_M^{\cl_B}$ for \fg\ $R$-modules $N \subseteq M$, $\taufg_{\cl}(R) \supseteq \taufg_{B}(R)$, so $\taufg_{\cl}(R)$ is nonzero as well.
\end{proof}

\begin{corollary} \label{cor: test ideal trivial = direct summand}
Let $R$ be local, $S$ an $R$-module, and either $R$ is complete or $S$ is \fg. Then $\tau_S(R)=R$ if and only if $S$ has a free summand, and consequently, $\cl_S$ is trivial if and only if $S$ has a free summand.
\end{corollary}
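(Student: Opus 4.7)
The plan is to simply combine three earlier results: Theorem \ref{thm: test = int} (which equates the test ideal with the trace ideal), Proposition \ref{properties interior}(6) (which characterizes when the trace ideal is all of $R$ in the local setting), and Corollary \ref{cor: test ideal = R iff closure is trivial} (which connects triviality of the closure to the test ideal being all of $R$).

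First I would handle the main biconditional. Note that since $R$ is Noetherian, every finitely generated $R$-module is finitely presented. So under either of the stated hypotheses ($R$ complete, or $S$ finitely generated), Theorem \ref{thm: test = int} applies and yields
\[ \tau_S(R) \;=\; \intr_S(R). \]
Next I would invoke Proposition \ref{properties interior}(6): since $R$ is local, $\intr_S(R) = R$ if and only if $S$ has a free summand (using the reference to \cite[Proposition 2.8]{LindoHaydeeTraceIdealsAndCentersOfEndomorphismRingsOfModulesOverCommutativeRings} and \cite[Lemma 3.45]{curtisreiner} already cited in that proposition). Chaining these two equivalences gives the first claim.

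For the "consequently" part, I would use Corollary \ref{cor: test ideal = R iff closure is trivial}, which says $\tau_{\cl_S}(R)=R$ if and only if $N_M^{\cl_S} = N$ for every inclusion of $R$-modules $N \subseteq M$, that is, if and only if $\cl_S$ is the trivial closure. Combined with the biconditional just established, this gives that $\cl_S$ is trivial if and only if $S$ has a free summand.

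There is essentially no obstacle here; the entire statement is a direct corollary of the preceding theorem together with two properties that have already been recorded. The only thing to be slightly careful about is checking that the hypotheses of Theorem \ref{thm: test = int} are met, which is immediate from $R$ being Noetherian, and noting that Proposition \ref{properties interior}(6) requires $R$ local (which is part of our hypothesis).
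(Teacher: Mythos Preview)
Your proposal is correct and follows essentially the same route as the paper: combine Theorem \ref{thm: test = int}, Proposition \ref{properties interior}(6), and Corollary \ref{cor: test ideal = R iff closure is trivial}. Your added remark that finitely generated implies finitely presented over a Noetherian ring is a helpful clarification the paper leaves implicit.
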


\begin{proof}
By part 6 of Proposition \ref{properties interior}, $\intr_S(R)=R$ if and only if $S$ has a free summand. Additionally, by Theorem \ref{thm: test = int}, $\tau_S(R)=\intr_S(R)$, and by Corollary \ref{cor: test ideal = R iff closure is trivial}, $\tau_S(R)=R$ if and only if $\cl_S$ is trivial.
\end{proof}

When $R$ is local has a canonical module $\omega$, $\omega$ has a free summand if and only if $R$ is Gorenstein, and hence
$\intr_\omega(R)$ can be used to detect whether the ring is Gorenstein \cite[Lemma 2.1]{herzoghibistamate}). We give a test ideal interpretation of this result.

\begin{corollary}
\label{canonicalmodule}
Let $R$ be a reduced (or generically Gorenstein) \CM\ local ring with a canonical module $\omega$. Then $R$ is Gorenstein if and only if $\tau_{\omega}(R)=R$.
\end{corollary}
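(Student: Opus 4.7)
The plan is to reduce the statement to the classical fact, due to Herzog--Hibi--Stamate, that under the given hypotheses the canonical module $\omega$ has a free $R$-summand if and only if $R$ is Gorenstein, and then invoke the trace-ideal characterization of the test ideal proved in Theorem \ref{thm: test = int}.

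First, since $R$ is Noetherian and $\omega$ is finitely generated, $\omega$ is automatically finitely presented. Theorem \ref{thm: test = int} therefore applies to $\cl=\cl_\omega$ and yields $\tau_\omega(R)=\intr_\omega(R)$. Next, part (6) of Proposition \ref{properties interior} tells us that $\intr_\omega(R)=R$ if and only if $\omega$ has a free $R$-summand; equivalently, this is Corollary \ref{cor: test ideal trivial = direct summand} applied with $S=\omega$. To close the loop, I would cite \cite[Lemma 2.1]{herzoghibistamate}: when $R$ is a Cohen--Macaulay local ring that is reduced or generically Gorenstein and admits a canonical module $\omega$, then $\omega$ has a free summand if and only if $R$ is Gorenstein. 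Chaining these three equivalences gives the stated biconditional.

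The only step requiring substantive input beyond the paper itself is the last; the first two are immediate from results already established in the excerpt. The ``reduced or generically Gorenstein'' hypothesis is precisely what ensures that $\omega$ can be identified with an ideal containing a non-zerodivisor, which forces any rank-one free summand of $\omega$ to fill out all of $\omega$. Without such a hypothesis, $\omega$ could in principle have a free summand without being free (and hence without $R$ being Gorenstein), so the hypothesis is genuinely used in Step 3 even though our Steps 1 and 2 make no use of it. Since the classical lemma is available off-the-shelf, there is no real obstacle; the content of the corollary is the translation of the Gorenstein property into a statement about the test ideal of the canonical module closure.
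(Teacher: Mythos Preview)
Your proposal is correct and follows essentially the same route as the paper: the paper's proof cites \cite[Lemma 2.1]{herzoghibistamate} for the equivalence of Gorensteinness with $\intr_\omega(R)=R$ (which, as noted just before the corollary, is itself the statement that $\omega$ has a free summand if and only if $R$ is Gorenstein), and then invokes Theorem \ref{thm: test = int}. You have simply made the intermediate free-summand step explicit via Proposition \ref{properties interior}(6), and correctly observed that $\omega$ is finitely presented so that Theorem \ref{thm: test = int} applies.
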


\begin{proof}
By \cite[Lemma 2.1]{herzoghibistamate}, $R$ is Gorenstein if and only if $\intr_{\omega}(R)=R$.
The result now follows from Theorem \ref{thm: test = int}.
\end{proof}

\begin{corollary}
Let $A$ and $B$ be $R$-modules satisfying the conditions of the theorem. If $\cl_A$ and $\cl_B$ are the closure operations associated to $A$ and $B$, then \[\tau_{A\oplus B}(R) = \tau_A(R)+\tau_B(R).\]
\end{corollary}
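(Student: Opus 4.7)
The plan is to deduce the statement directly from the two main tools already established: the identification $\tau_{\cl_B}(R) = \intr_B(R)$ from Theorem \ref{thm: test = int}, and the additivity of the trace under direct sums from part (2) of Proposition \ref{properties interior}.

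First I would verify that $A \oplus B$ itself satisfies the hypotheses of Theorem \ref{thm: test = int}. If $R$ is complete, there is nothing to check, since the conclusion of the theorem holds for every $R$-module. Otherwise, the hypothesis forces both $A$ and $B$ to be finitely presented, and a finite direct sum of finitely presented modules is again finitely presented, so $A \oplus B$ qualifies as well.

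Then I would invoke Theorem \ref{thm: test = int} three times to obtain the identifications
\[
\tau_A(R) = \intr_A(R), \qquad \tau_B(R) = \intr_B(R), \qquad \tau_{A \oplus B}(R) = \intr_{A \oplus B}(R).
\]
At this point the problem is reduced to a purely trace-theoretic identity, and part (2) of Proposition \ref{properties interior} (applied with the roles of the summands and target swapped, i.e.\ computing $\intr_{A \oplus B}(R)$) gives
\[
\intr_{A \oplus B}(R) = \intr_A(R) + \intr_B(R).
\]
Combining the displayed equations yields $\tau_{A \oplus B}(R) = \tau_A(R) + \tau_B(R)$.

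There is no serious obstacle in this argument, since both ingredients are already in place. The only small subtlety worth flagging is the hypothesis check on $A \oplus B$ in the non-complete case; this is why one needs both $A$ and $B$ to be finitely presented rather than merely finitely generated, so that the hypotheses transfer to the direct sum.
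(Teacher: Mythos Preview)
Your proposal is correct and follows exactly the paper's approach: invoke Theorem \ref{thm: test = int} to pass from test ideals to trace ideals, then apply Proposition \ref{properties interior} part (2). Your extra remark verifying that $A\oplus B$ inherits the hypotheses is a welcome bit of care that the paper leaves implicit.
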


\begin{proof}
This follows from the previous Theorem and Proposition \ref{properties interior} part (2).
\end{proof}

\section{Test ideals of Families}
\label{families}

We extend the concept of test ideal introduced in the previous setting to that of families of modules. We can make this definition even when the family of modules does not give an idempotent closure operation, which is one way to deal with the question of how large the sum of the corresponding module closure operations can be (discussed in \cite[Section 9.2]{RGRebecca_Closures_Big_CM_modules_and_singularities}).  We will then discuss the test ideals of specific families of big \CM\ modules and algebras and connect them to the singularities of the ring.

\begin{definition}
\label{familytestideal}
Let $\mathcal{B}$ be a family of $R$-modules, not necessarily \fg. We define the test ideal associated to $\mathcal{B}$ as 
\[ \tau_{\mathcal{B}}(R):= \bigcap_{B \in \mathcal{B}} \tau_B(R) \]

\end{definition}

We list an immediate set of properties

\begin{lemma}
Let $R$ be a commutative ring and $\mathcal{B},\mathcal{C}$ families of $R$-modules, then 
\begin{enumerate}[label=(\alph*)]
\item $\tau_{\emptyset}(R)=R$.
\item If $(0)\in \mathcal{B}$ then $\tau_{\mathcal{B}}(R)=0$, in particular $\tau_{R-mod}(R)=0$.
\item If $\mathcal{B}\subseteq \mathcal{C}$, then $\tau_{\mathcal{B}}(R) \supseteq \tau_{\mathcal{C}}(R)$.
\item $\tau_{\mathcal{B}\cup \mathcal{C}}(R)=\tau_{\mathcal{B}}(R) \cap \tau_{\mathcal{C}}(R)$.
\item $\tau_{\mathcal{B}\cap \mathcal{C}}(R)\supseteq \tau_{\mathcal{B}}(R) + \tau_{\mathcal{C}}(R)$.
\end{enumerate}
\end{lemma}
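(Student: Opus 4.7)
The plan is to read off each item directly from the definition
\[
\tau_{\mathcal{B}}(R) = \bigcap_{B \in \mathcal{B}} \tau_B(R),
\]
so that the entire argument reduces to formal properties of intersections and sums of families of ideals in $R$, together with one small module-theoretic computation needed for part~(b). No further machinery is required, and in particular none of the results of the earlier sections is invoked.

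For~(a), I would appeal to the usual convention that the empty intersection of ideals in $R$ equals $R$, yielding $\tau_{\emptyset}(R) = R$. For~(c), enlarging the indexing set can only add further ideals $\tau_B(R)$ to the intersection and hence can only shrink the result, so $\mathcal{B} \subseteq \mathcal{C}$ forces $\tau_{\mathcal{B}}(R) \supseteq \tau_{\mathcal{C}}(R)$. Part~(d) is then the standard separation of the indexing set, $\bigcap_{B \in \mathcal{B} \cup \mathcal{C}} \tau_B(R) = \bigl(\bigcap_{B \in \mathcal{B}} \tau_B(R)\bigr) \cap \bigl(\bigcap_{B \in \mathcal{C}} \tau_B(R)\bigr)$. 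Part~(e) is immediate from~(c) applied twice, since $\mathcal{B} \cap \mathcal{C} \subseteq \mathcal{B}$ and $\mathcal{B} \cap \mathcal{C} \subseteq \mathcal{C}$, combined with the elementary fact that an ideal containing two ideals contains their sum.

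The only step that requires any module-theoretic content is~(b). Here I would first verify directly that $\tau_{0}(R) = 0$: for the zero module $B = 0$, the condition $b \otimes u \in \operatorname{Im}(B \otimes N \to B \otimes M)$ must hold only for the single element $b = 0 \in B$, where it is automatic, so $N^{\cl_0}_M = M$ for every inclusion $N \subseteq M$. Taking $N = 0$ and $M = R$ shows that $(0 :_R R) = 0$ appears as one of the colons in the intersection defining $\tau_0(R)$, forcing $\tau_0(R) = 0$. If $(0) \in \mathcal{B}$, then by part~(c) we obtain $\tau_{\mathcal{B}}(R) \subseteq \tau_0(R) = 0$; the ``in particular'' statement about $R$-mod follows because the zero module lies in that family. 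I do not anticipate any real obstacle: the lemma is bookkeeping, and the only step with genuine content is the identification of $\cl_0$ as the total closure, which is read straight off the definition.
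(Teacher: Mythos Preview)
Your proposal is correct and matches the paper's treatment: the paper gives no explicit proof, introducing the lemma as ``an immediate set of properties'' and leaving the verification to the reader. Your unpacking of the definition, including the small computation identifying $\cl_0$ as the total closure for part~(b), is exactly what that omitted verification amounts to.
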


Note that if $\mathcal{B}$ is a directed family of $R$-algebras or of $R$-modules directed under generation, so that it defines a closure operation, then this definition of the test ideal agrees with our prior definition:
 
\begin{proposition}
\label{twotestidealsareequal}
Suppose that $\mathcal{B}$ is a directed family of $R$-algebras, or of $R$-modules directed under generation. Let $\cl$ be the closure operation associated to $\mathcal{B}$. Then
$\tau_{\mathcal{B}}(R)=\tau_{\cl}(R)$.
\end{proposition}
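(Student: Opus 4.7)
The plan is to reduce the proposition to the pointwise identity
\[ N_M^{\cl} = \bigcup_{B \in \mathcal{B}} N_M^{\cl_B} \]
for every pair of $R$-modules $N \subseteq M$. Granted this, I will compute
\[ \tau_{\cl}(R) = \bigcap_{N \subseteq M}(N : N_M^{\cl}) = \bigcap_{N \subseteq M}\bigcap_{B \in \mathcal{B}}(N : N_M^{\cl_B}) = \bigcap_{B \in \mathcal{B}} \tau_B(R) = \tau_{\mathcal{B}}(R), \]
using that an element of $R$ carries $\bigcup_B N_M^{\cl_B}$ into $N$ if and only if it carries each $N_M^{\cl_B}$ into $N$, and then swapping the order of the two intersections. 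Hence the entire content of the proposition lies in establishing the union formula.

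For that formula, the inclusion $\supseteq$ is immediate from $\cl_B \subseteq \cl_{\mathcal{B}}$ for each $B \in \mathcal{B}$. The reverse inclusion is where the directedness hypothesis is genuinely used. I would argue that any two members $B_1, B_2$ of the family are dominated by a third $B_3 \in \mathcal{B}$ in the sense that $\cl_{B_1}, \cl_{B_2} \subseteq \cl_{B_3}$. In the algebra case, this is because the structure maps $B_i \to B_3$ carry any witness $1_{B_i} \otimes u = \sum b_j \otimes n_j$ of membership in $N_M^{\cl_{B_i}}$ forward to a witness of membership in $N_M^{\cl_{B_3}}$. In the generation case, it is exactly the content of Proposition \ref{generationandclosures} (or more directly \cite[Proposition 3.6]{RGRebecca_Closures_Big_CM_modules_and_singularities}), which says that if $B_i$ generates $B_3$ then $\cl_{B_i} \subseteq \cl_{B_3}$. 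Consequently, a finite sum $\cl_{B_1} + \cdots + \cl_{B_k}$ is already contained in a single $\cl_{B_{k+1}}$ from the family, so the iterative construction of Remark \ref{rmk: closure for family} terminates after one step and $\cl_{\mathcal{B}}$ really is the directed union of the $\cl_B$.

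The main thing to watch out for is verifying that the Epstein-style iteration needed to turn the (a priori non-idempotent) sum $\sum \cl_{B_i}$ into a genuine closure operation truly collapses to the directed union under the given hypotheses. That is the single place where directedness is essential, and it is precisely what the domination argument in the previous paragraph supplies. Once this step is in hand, the rest is the formal manipulation of intersections displayed at the start of the plan, so no further work beyond checking the colon/swap identities is required.
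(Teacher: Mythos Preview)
Your proposal is correct and follows essentially the same approach as the paper. The paper's proof first invokes Lemma~\ref{testidealall0} to rewrite $\tau_{\cl}(R)=\bigcap_M \Ann_R 0_M^{\cl}$, then uses $0_M^{\cl}=\sum_{B\in\mathcal{B}} 0_M^{\cl_B}$ together with $\Ann_R(\sum)=\bigcap \Ann_R$ and a swap of intersections; you instead work directly from the colon definition and the identity $N_M^{\cl}=\bigcup_B N_M^{\cl_B}$, which amounts to the same manipulation without passing through the semi-residual reduction. Your extended justification of why directedness collapses the Epstein iteration is more detailed than what the paper writes (it simply cites Remark~\ref{rmk: closure for family}), but the content is the same.
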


\begin{proof}
We have 
\[
\begin{aligned}
\tau_{\cl}(R) &=\bigcap_{M \text{ an $R$-module}} \Ann_R 0_M^{\cl} \\
&= \bigcap_M \Ann_R \left(\sum_{B \in \mathcal{B}} 0_M^{\cl_B}\right) \\
&=\bigcap_{M} \bigcap_{B \in \mathcal{B}} \Ann_R 0_M^{\cl_B} \\
&=\bigcap_{B \in \mathcal{B}} \tau_B(R) \\
&=\tau_{\mathcal{B}}(R).
\end{aligned}
\]
\end{proof}

\begin{corollary}
\label{testidealoffamilyintermsofint}
Under the conditions of Theorem \ref{thm: test = int} (i.e., $R$ is complete local, or $R$ is local and every $B \in \mathcal{B}$ is finitely-presented),
 \[ \tau_{\mathcal{B}}(R)= \bigcap_{B\in \mathcal{B}}\intr_B(R)\]
 \end{corollary}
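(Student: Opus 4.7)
The plan is to derive the corollary directly from the definition of $\tau_{\mathcal{B}}(R)$ together with Theorem \ref{thm: test = int}. By Definition \ref{familytestideal}, the test ideal of the family is defined as the intersection
\[
\tau_{\mathcal{B}}(R) = \bigcap_{B \in \mathcal{B}} \tau_B(R),
\]
so the statement reduces to rewriting each individual factor $\tau_B(R)$ as $\intr_B(R)$.

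The rewriting of each $\tau_B(R)$ is exactly the content of Theorem \ref{thm: test = int}, which requires that either $R$ is complete local, or that $B$ is a finitely-presented $R$-module. Both of these hypotheses are built into the assumption in the corollary's statement, and in the second case the hypothesis is assumed uniformly for every $B \in \mathcal{B}$, so Theorem \ref{thm: test = int} is applicable to each member of the family separately.

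Therefore the entire argument is simply: apply Theorem \ref{thm: test = int} term by term inside the intersection, i.e.
\[
\tau_{\mathcal{B}}(R) \;=\; \bigcap_{B \in \mathcal{B}} \tau_B(R) \;=\; \bigcap_{B \in \mathcal{B}} \intr_B(R),
\]
which is the desired equality. There is no real obstacle here; the corollary is a clean packaging of the previous results, and the only thing to be careful about is noting that the hypotheses of Theorem \ref{thm: test = int} apply to every $B$ in the family (hence the phrasing in the statement). No directedness or idempotence of the associated closure is needed, precisely because Definition \ref{familytestideal} takes an intersection over the family rather than working with a single sum-closure.
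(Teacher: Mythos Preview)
Your proposal is correct and matches the paper's approach: the paper states this corollary without proof, as it follows immediately from Definition~\ref{familytestideal} together with Theorem~\ref{thm: test = int} applied to each $B \in \mathcal{B}$, exactly as you describe.
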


\begin{corollary} 
Let $R$ be a complete local domain. If $\cal{S}$ is a directed family of $R$-algebras or a family of $R$-modules directed under generation (so that $\cl_{\cal{S}}$ is a closure operation), then $\cl_{\cal{S}}$ is trivial if and only if for every $S \in \cal{S}$, $S$ has a free summand.
\end{corollary}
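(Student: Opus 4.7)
The plan is to chain together three results established earlier in the section, so the proof is essentially a bookkeeping exercise with one small verification to make. First I would invoke Corollary \ref{cor: test ideal = R iff closure is trivial} to translate the statement ``$\cl_{\cal{S}}$ is trivial'' into the statement ``$\tau_{\cl_{\cal{S}}}(R) = R$''. This is the only clean way to convert a statement about closures on all pairs $N \subseteq M$ into a statement about a single ideal of $R$.

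Next I would rewrite this test ideal using the hypothesis that $\cal{S}$ defines a closure operation (which is given, since $\cal{S}$ is either a directed family of algebras or a family of modules directed under generation). By Proposition \ref{twotestidealsareequal} together with Definition \ref{familytestideal}, we have
\[ \tau_{\cl_{\cal{S}}}(R) = \tau_{\cal{S}}(R) = \bigcap_{S \in \cal{S}} \tau_S(R). \]
Since each $\tau_S(R)$ is an ideal of $R$, this intersection equals $R$ if and only if $\tau_S(R) = R$ for every $S \in \cal{S}$.

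Finally I would apply Corollary \ref{cor: test ideal trivial = direct summand} individually to each $S \in \cal{S}$. The point to check is that the hypotheses of that corollary are met, and this is where the assumption that $R$ is a complete local domain (not merely local) is used: completeness of $R$ removes any finite-generation requirement on $S$, so the corollary applies uniformly to every member of the family, algebra or module. Thus $\tau_S(R) = R$ if and only if $S$ has a free summand, closing the biconditional.

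The main obstacle, such as it is, will simply be making sure the chain of equivalences is legitimately justified at each step; in particular one must confirm that Proposition \ref{twotestidealsareequal} requires exactly the hypothesis placed on $\cal{S}$, and that the completeness of $R$ (rather than finite generation of the $S$'s) is what permits us to use Corollary \ref{cor: test ideal trivial = direct summand}. No new computation or independent argument seems necessary.
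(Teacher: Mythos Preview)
Your proof is correct and matches the paper's approach; indeed, the paper explicitly offers your route (via Definition \ref{familytestideal} and Corollary \ref{cor: test ideal trivial = direct summand}) as an alternative to its primary argument, which goes through trace ideals using Corollary \ref{testidealoffamilyintermsofint} and part (6) of Proposition \ref{properties interior} instead. The two are interchangeable since Corollary \ref{cor: test ideal trivial = direct summand} is itself derived from those trace-ideal results.
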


\begin{proof}
By Proposition \ref{twotestidealsareequal} and Corollary \ref{testidealoffamilyintermsofint}, 
\[\tau_{\cl_{\cal{S}}}(R)=\bigcap_{S \in \cal{S}} \intr_S(R).\]
We know that $\cl_{\cal{S}}$ is trivial if and only if $\tau_{\cl_{\cal{S}}}=R$. The right hand side is equal to $R$ if and only if $\intr_S(R)=R$ for all $S \in \cal{S}$, which holds if and only if each $S$ has a free summand (Lemma \ref{properties interior}, part (6)).

Alternatively, this follows from Definition \ref{familytestideal} and Corollary \ref{cor: test ideal trivial = direct summand}.
\end{proof}

Ideally, we want to consider the test ideal coming from the family of all Cohen-Macaulay modules, since a ring is regular if and only if the test ideals of these modules are equal to the whole ring by Corollary \ref{testidealtrivialonregular}. The collection of \CM\ modules is not generally a set, so we work with the following family instead:

\begin{remark}
\label{basremark}
Let $R$ be any Cohen-Macaulay ring and consider the full subcategory of $Mod(R)$ consisting of big \CM\ modules over $R$. For any set $S$ the module $R^S$ is in  this subcategory, hence there is an embedding of the category of sets to the category of \CM\ modules over $R$. The former is not a small category, so the latter is not a small category either. 

To avoid this complication we restrict to a single representative for each isomorphism class of \CM\ modules and bound the size of the modules we consider. 
To do this, let $R$ be a local ring and $Bas$ be a fixed infinite set. Let $CM(R)$ be the full subcategory of big \CM\ $R$-modules that are quotients of free $R$-modules $R^S$ with $S \subseteq Bas$. This is a small category, and therefore we can consider the set of objects in this category. 
For the purposes of this paper, it is enough for $Bas$ to have countable order, and 
we denote the set of objects by $CM$.
Since isomorphic modules give the same closure operation, test ideal, and trace ideal, studying $CM$ is sufficient for our purposes.
\end{remark} 

\begin{definition} 
\label{tausingdef}
Let $R$ be a complete local domain. We define the singular test ideal to be \[\tau_{sing}(R)=\bigcap_{B\in CM} \tau_B(R),\]  
where $CM$ is defined as in Remark \ref{basremark}.
\end{definition}

\begin{proposition}
Let $(R,m,k)$ be a complete local domain, then $R$ is regular if and only if $\tau_{sing}(R)=R$.
\end{proposition}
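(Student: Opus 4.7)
The forward direction is immediate: if $R$ is regular, then by Corollary \ref{testidealtrivialonregular} we have $\tau_B(R)=R$ for every big CM $R$-module $B$, so intersecting over the sub-collection $CM$ still gives $R$.

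For the converse, suppose $\tau_{sing}(R)=R$, i.e., $\tau_B(R)=R$ for every $B\in CM$. My plan proceeds in three stages. First, since complete local domains admit big CM modules by Hochster--Huneke and Andr\'e, and $Bas$ may be chosen large enough to include at least one such module $B_0$ in $CM$, the hypothesis gives $\tau_{B_0}(R)=R$. Corollary \ref{onecmtrivialimpliescmtestideal} then forces $R$ to be Cohen--Macaulay.

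Second, once $R$ is known to be CM, every finitely generated maximal CM $R$-module $M$ is a quotient of some $R^n$ with $n$ finite, and hence lies in $CM$. So $\tau_M(R)=R$, and Corollary \ref{cor: test ideal trivial = direct summand} forces $M$ to have a free summand. Third, because $R$ is complete, the Krull--Schmidt theorem applies to finitely generated $R$-modules; combined with the previous step, every indecomposable f.g.\ MCM must be isomorphic to $R$, so every f.g.\ MCM is free. Setting $d=\dim R$ and iterating the depth lemma on a minimal free resolution of $k$ shows that $\text{syz}^d(k)$ is a finitely generated MCM, hence free. Therefore $k$ has finite projective dimension, and the Auslander--Buchsbaum--Serre theorem yields that $R$ is regular.

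The main obstacle is the first stage---ensuring that $CM$ contains at least one big CM module for every complete local domain. This is a mild cardinality issue, addressed by the choice of $Bas$ in Remark \ref{basremark}; once past it, the argument is a clean combination of the free-summand criterion of Corollary \ref{cor: test ideal trivial = direct summand} with standard homological algebra over complete local rings.
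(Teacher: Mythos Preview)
Your argument is correct and takes a genuinely different route from the paper's own proof. The paper proceeds by invoking the external structural result (Theorem 5.1 of \cite{RGRebecca_Closures_Big_CM_modules_and_singularities}) that every Dietz closure on finitely generated modules is contained in $\cl_B$ for some countably-generated big \CM\ module $B$; since all such $\cl_B$ are trivial by hypothesis, every Dietz closure is trivial, and Theorem \ref{trivialiffregular} finishes. Your approach instead stays inside the toolbox developed in this paper: after reducing to the \CM\ case via Corollary \ref{onecmtrivialimpliescmtestideal}, you apply the free-summand criterion of Corollary \ref{cor: test ideal trivial = direct summand} to the finitely generated MCM modules in $CM$, then use Krull--Schmidt and Auslander--Buchsbaum--Serre on $\text{syz}^d(k)$. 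This is arguably more self-contained, and it showcases the trace-ideal characterization (Theorem \ref{thm: test = int}) as the engine of the argument, whereas the paper's proof of this particular proposition does not use that theorem at all. The trade-off is that the paper's route simultaneously handles all Dietz closures (not just module closures), while yours is tailored specifically to $\tau_{sing}$.

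Two small points are worth tightening. First, your phrasing ``$Bas$ may be chosen large enough'' is slightly off: $Bas$ is fixed (countable) in Remark \ref{basremark}, and what you need is that a \emph{countably-generated} big \CM\ module exists over any complete local domain---the paper itself supplies this in its proof by noting that Hochster's module-modification construction is countable. Second, for Step 2 you are implicitly using that a finitely generated MCM module over a \CM\ local domain is a (balanced) big \CM\ module in the sense of Definition \ref{tausingdef}; this is standard but worth one sentence, since membership in $CM$ requires that every \sop\ be a regular sequence.
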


\begin{proof}
If $R$ is regular, by Corollary \ref{testidealtrivialonregular}, $\tau_B(R)=R$ for all big \CM\ $R$-modules $B$. Hence $\tau_{sing}(R)=R$.

If $\tau_{sing}(R)=R$, then $\tau_B(R)=R$ for all countably-generated big \CM\ $R$-modules $B$. Hence for such $B$, $\cl_B$ is trivial on all submodules of all $R$-modules. Let $\cl$ be a Dietz closure on \fg\ $R$-modules. By Theorem 5.1 of \cite{RGRebecca_Closures_Big_CM_modules_and_singularities} there exists a countably-generated big \CM\ $R$-module $B$ such that $\cl \subseteq \cl_B$ on submodules of \fg\ $R$-modules. Note that $B$ is not explicitly described as countably-generated in \cite{RGRebecca_Closures_Big_CM_modules_and_singularities}, but the process of constructing $B$ using module modifications uses countably many steps, each adding a finite number of generators. Hence $\cl$ is trivial on submodules of \fg\ $R$-modules. Since this holds for all Dietz closures $\cl$, $R$ is regular by Theorem \ref{trivialiffregular}.
\end{proof}

The following results connect the test ideals of big \CM\ modules to the singular locus of the ring, and are used to get more specific results on test ideals of big \CM\ modules over rings with finite \CM\ type.

\begin{remark}
In this paper, all \fg\ \CM\ $R$-modules are assumed to be maximal, i.e. to have dimension equal to $\dim R$. 
\end{remark}

\begin{theorem} {\label{Prop:Singular contains V(int)}}
Let $B$ be a  \fg\  \CM\ module over a local domain $R$. Then $V(\intr_B(R))$ is contained in the singular locus of $R$. 
\end{theorem}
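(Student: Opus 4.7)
My plan is to argue by the contrapositive. Given a prime $\p$ with $R_\p$ regular, I want to show $\intr_B(R) \not\subseteq \p$, equivalently $\intr_B(R)_\p = R_\p$. This reduces by localization to showing $\intr_{B_\p}(R_\p) = R_\p$, which will follow once I know that $B_\p$ has a free summand of positive rank; the classical structure theory of MCM modules over regular local rings makes this automatic.

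First, I would check that the trace construction commutes with localization at $\p$. Since $B$ is finitely generated over the Noetherian ring $R$, $\Hom_R(B,R)$ commutes with localization, and then part (1) of Proposition \ref{properties interior} gives $\intr_B(R)_\p = \intr_{B_\p}(R_\p)$. So it suffices to compute the trace of $B_\p$ inside $R_\p$.

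Next, I would verify that $B_\p$ is a nonzero maximal Cohen--Macaulay $R_\p$-module. Because $B$ is MCM over a local domain $R$, every associated prime $\mathfrak{q}$ of $B$ satisfies $\dim R/\mathfrak{q} = \dim R$; in a catenary local domain the only such prime is $(0)$, so $B$ is torsion-free and hence faithful. Therefore $\p \in \Supp(B)$, so $B_\p \neq 0$, and the standard localization argument for CM modules at primes of their support shows $B_\p$ is MCM over $R_\p$.

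Finally, since $R_\p$ is regular, every finitely generated $R_\p$-module has finite projective dimension. The Auslander--Buchsbaum formula applied to the MCM module $B_\p$ gives $\operatorname{pd}_{R_\p}(B_\p) = \operatorname{depth}(R_\p) - \operatorname{depth}_{R_\p}(B_\p) = 0$, so $B_\p$ is a nonzero free $R_\p$-module. Projection onto any summand is a surjection $B_\p \twoheadrightarrow R_\p$, which forces $\intr_{B_\p}(R_\p) = R_\p$, and combining with the first step completes the argument. The one subtle point I anticipate is the faithfulness step, which implicitly uses that $R$ is catenary in order to rule out nontrivial associated primes of $B$ — harmless in the complete local setting of the paper, but worth making explicit.
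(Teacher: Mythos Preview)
Your argument is correct and follows essentially the same outline as the paper: localize at a regular prime $P$, use faithfulness of $B$ to ensure $B_P\neq 0$, and conclude that $\intr_{B_P}(R_P)=R_P$. The only difference is in the last step, where the paper invokes its own machinery (a big \CM\ module over a regular local ring is faithfully flat, hence $\cl_{B_P}$ is trivial and $\tau_{B_P}(R_P)=\intr_{B_P}(R_P)=R_P$), while you reach the same conclusion directly via Auslander--Buchsbaum; this is a slightly more elementary finish but not a genuinely different route.

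One remark on your flagged worry: the catenary hypothesis is not actually needed for the faithfulness step. In any Noetherian local domain $(R,\m)$, if $\mathfrak q\neq(0)$ then any chain of primes from $\mathfrak q$ to $\m$ extends by prepending $(0)$, so $\dim R/\mathfrak q<\dim R$. Thus the only prime with $\dim R/\mathfrak q=\dim R$ is $(0)$, and your argument that $\Ass(B)=\{(0)\}$ (hence $B$ is torsion-free and faithful) goes through for an arbitrary local domain, matching the generality of the stated theorem.
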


\begin{proof}

Suppose otherwise, then there exists $P\in \Spec(R)$ such that $R_P$ is a regular ring and $\intr_B(R)\subseteq P$. After localizing at $P$ this implies $\intr_{B_P}(R_P) \subseteq PR_P$. Since $B$ is faithful over $R$, $B_P$ is nonzero. It is also finitely-generated, so by Nakayama's lemma $PB_P \ne B_P$. Now, $B_P$ is a \CM\ module over the regular local ring $R_P$, hence faithfully flat over $R_P$ \cite[Pag. 77]{HochsterHunekeInfiniteIntegralExtensions}, a local ring, and hence $\tau_{B_P}(R_P)=R_P$ ($B_P$ gives the trivial closure, so it gives the whole ring as the test ideal). This implies that
$\intr_{B_P}(R_P)=R_P$, a contradiction.
\end{proof}

This leads to a statement for test ideals.

\begin{corollary} {\label{Prop:Singular contains V(tau)}}
Let $B$ be a \fg\ \CM\  module over a complete local domain $R$. Then $V(\tau_B(R))$ is contained in the singular locus of $R$. 
\end{corollary}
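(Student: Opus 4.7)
The plan is to deduce this corollary directly from Theorem \ref{Prop:Singular contains V(int)} together with the identification of the test ideal with the trace ideal provided by Theorem \ref{thm: test = int}. Since $R$ is complete local and $B$ is finitely-generated, and $R$ is Noetherian, $B$ is in particular finitely-presented, so both hypotheses of Theorem \ref{thm: test = int} apply (either one suffices). Hence
\[
\tau_B(R) \;=\; \intr_B(R),
\]
so $V(\tau_B(R)) = V(\intr_B(R))$.

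Then I would invoke Theorem \ref{Prop:Singular contains V(int)}, which says exactly that $V(\intr_B(R))$ is contained in the singular locus of $R$, as long as $B$ is a finitely-generated Cohen-Macaulay module over a local domain (a weaker hypothesis than what we have, since completeness is not required there). Combining the two displays gives $V(\tau_B(R)) \subseteq \mathrm{Sing}(R)$, which is the desired containment.

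There is essentially no obstacle: the content of the corollary is already present in the two cited results, and the only role of the completeness hypothesis on $R$ is to activate Theorem \ref{thm: test = int} so that we may pass from the trace-ideal statement to the test-ideal statement. (If one wanted to weaken the hypothesis, one could replace ``complete'' with ``local'' in the statement, since $B$ is automatically finitely-presented when finitely-generated over a Noetherian ring, and then Theorem \ref{thm: test = int} still applies under the second alternative of its hypothesis.) Thus the proof is a one-line application of the equality $\tau_B(R)=\intr_B(R)$ to Theorem \ref{Prop:Singular contains V(int)}.
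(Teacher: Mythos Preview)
Your proposal is correct and matches the paper's own proof exactly: the paper simply says the result follows immediately from Theorem \ref{Prop:Singular contains V(int)} and Theorem \ref{thm: test = int}. Your additional remark that completeness could be dropped (since a finitely-generated module over a Noetherian ring is finitely-presented) is a valid observation not made explicit in the paper.
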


\begin{proof}
This follows immediately from the previous result and  Theorem \ref{thm: test = int}. 
\end{proof}

\begin{remark}
We denote by $MCM(R)$ the set of all \fg\ (maximal) \CM\ modules over $R$.
We will write just $MCM$ if $R$ is understood from the context.
\end{remark}

\begin{definition}
Let $R$ be a local ring. $R$ has \textit{finite \CM\ type} if $R$ has finitely many isomorphism classes of indecomposable \fg\ \CM\ modules.
\end{definition}

If $R$ is a local ring of finite \CM\ type, we know the following:

\begin{itemize}
\item (Auslander \cite[Theorem 7.12]{LeuschkeWiegandCohenMacaulayRepresentations}) $R$ has isolated singularities.
\item If $R$ is not regular then the top dimensional syzygy $S$ of the residue field $k$ is a \fg\ \CM\ module for $R$ with no free summand \cite[Corollary 1.2]{SyzygiesHomologicalConjecturesDutta}. Hence by Proposition \ref{properties interior} part 6, $\intr_S(R) \neq R$ and by Corollary \ref{cor: test ideal trivial = direct summand}, $\tau_S(R) \ne R$.
\end{itemize}

\begin{proposition}
\label{finitecmtypemprimary}
Suppose that $(R,m)$ is a \CM\ ring with finite \CM\ type. If $R$ is not regular then \[\sqrt{\bigcap_{M\in \text{MCM}}\intr_M(R)} = m.\] Consequently \[\sqrt{\tau_{ \text{MCM}}(R)} = m.\]
\end{proposition}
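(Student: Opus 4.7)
The plan is to use the isolated singularity consequence of finite \CM\ type to control each $\intr_M(R)$ individually, and then reduce the infinite intersection to a finite one.

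First, I would invoke Auslander's theorem (bulleted above) to conclude that the singular locus of $R$ equals $\{m\}$, since $R$ is not regular. Combined with Theorem \ref{Prop:Singular contains V(int)}, this forces $V(\intr_M(R)) \subseteq \{m\}$ for every $M \in \text{MCM}$, so each $\intr_M(R)$ is either all of $R$ or an $m$-primary ideal.

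Next, I would reduce the intersection to finitely many modules. Let $M_1,\dots,M_n$ be representatives for the finitely many isomorphism classes of indecomposable modules in $\text{MCM}$. Any $N \in \text{MCM}$ admits a finite indecomposable decomposition $N \cong \bigoplus_i M_i^{a_i}$, and Proposition \ref{properties interior}(3) gives
\[\intr_N(R) = \sum_{a_i > 0} \intr_{M_i}(R) \supseteq \intr_{M_j}(R)\]
for each summand $M_j$ that appears. Hence $\bigcap_{N \in \text{MCM}} \intr_N(R) = \bigcap_{i=1}^n \intr_{M_i}(R)$, a finite intersection. Taking radicals commutes with finite intersections, so this equals $\bigcap_{i=1}^n \sqrt{\intr_{M_i}(R)}$, and each factor is either $R$ or $m$ by the previous paragraph. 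At least one factor must be $m$: otherwise every $\intr_{M_i}(R) = R$, so for the top-dimensional syzygy $S$ of $k$ (which is MCM with no free summand, bulleted above), writing $S \cong \bigoplus_i M_i^{b_i}$ and using Proposition \ref{properties interior}(3) again would yield $\intr_S(R) = \sum_i \intr_{M_i}(R) = R$, contradicting Proposition \ref{properties interior}(6). Thus the radical of the intersection is $m$.

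The consequence for $\tau_{\text{MCM}}(R)$ follows from Corollary \ref{testidealoffamilyintermsofint}: since $R$ is Noetherian, every \fg\ $R$-module is finitely presented, so $\tau_{\text{MCM}}(R) = \bigcap_{M \in \text{MCM}} \intr_M(R)$, and the computation above gives $\sqrt{\tau_{\text{MCM}}(R)} = m$. The only delicate point is the finite-indecomposable reduction, which relies on the existence of finite indecomposable decompositions for \fg\ modules over a Noetherian local ring; uniqueness of the decomposition is not needed.
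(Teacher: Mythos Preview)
Your proof is correct and follows essentially the same strategy as the paper: use Auslander's isolated-singularity theorem together with Theorem \ref{Prop:Singular contains V(int)} to force each $\intr_M(R)$ to be either $R$ or $m$-primary, invoke the syzygy module to rule out the possibility that all of them equal $R$, and conclude. The paper's proof is terser and simply asserts that ``a finite intersection of $m$-primary ideals is $m$-primary''; your version is more careful in that you explicitly justify, via Proposition \ref{properties interior}(3) and the indecomposable decomposition, why the a priori infinite intersection $\bigcap_{N\in\text{MCM}}\intr_N(R)$ equals the finite intersection $\bigcap_{i=1}^n \intr_{M_i}(R)$ over a set of indecomposable representatives.
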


\begin{proof}
Let $M$ be a \fg\  \CM\ module over $R$. Then by Proposition \ref{Prop:Singular contains V(int)}, since $R$ has an isolated singularity, $\sqrt{\intr_M(R)}$ is either $m$-primary or $R$. From the facts above there is at least one MCM module (say the top dimensional syzygy) that gives an $m$-primary trace ideal. Since a finite intersection of $m$-primary ideals is $m$-primary, the result follows. 
\end{proof}

The following results connect the trace ideal, and hence the test ideal, to the socle of the ring (the set of elements annihilated by the maximal ideal $m$). Rings with nonzero socle are not reduced.

\begin{lemma}
\label{interiorcontainssocle}
Let $(R,m)$ be a local ring and $B$ an $R$-module such that $B/mB$ is nonzero (for example, $B$ could be a nonzero \fg\ module). Then $\text{soc}(R) \subseteq \intr_B(R)$.
\end{lemma}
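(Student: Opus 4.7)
The plan is to construct, for each socle element $s$, an explicit $R$-linear map $B \to R$ whose image contains $s$; this immediately places $s$ into $\intr_B(R) = \sum_{\phi \in \Hom_R(B,R)} \phi(B)$.

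The key observation is that since $s \in \text{soc}(R)$ satisfies $ms = 0$, the $R$-module map $k = R/m \to R$ defined by $\bar{1} \mapsto s$ is well-defined, with image $Rs = (s)$. So it suffices to produce a nonzero $R$-linear map $B \to k$; composing with the above map $k \to R$ will land $s$ in the image of the composition.

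To produce such a map, I would exploit the hypothesis $B/mB \ne 0$. The quotient $B/mB$ is a nonzero $k$-vector space, so we may choose a $k$-linear surjection $B/mB \twoheadrightarrow k$ (for instance, fix a $k$-basis and project onto one coordinate). Precomposing with $B \twoheadrightarrow B/mB$ gives an $R$-linear surjection $\pi: B \twoheadrightarrow k$. Composing $\pi$ with the map $k \to R,\ \bar{1} \mapsto s$ yields an $R$-linear map $\phi_s: B \to R$ whose image is exactly $Rs$. In particular $s \in \phi_s(B) \subseteq \intr_B(R)$. Since this construction works for every $s \in \text{soc}(R)$, we conclude $\text{soc}(R) \subseteq \intr_B(R)$.

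I do not anticipate a serious obstacle here; the main point is simply to remember that an element annihilated by $m$ gives an $R$-linear map out of $k$, which is exactly what is needed to convert a surjection $B \twoheadrightarrow k$ (available whenever $B \ne mB$) into a map $B \to R$ hitting $s$. The parenthetical case of a nonzero \fg\ module is just Nakayama's lemma, ensuring $B/mB \ne 0$.
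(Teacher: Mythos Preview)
Your proof is correct and follows essentially the same approach as the paper: both use the hypothesis $B/mB\neq 0$ to produce a surjection $B\twoheadrightarrow k$, then compose with the map $k\to R$, $\bar{1}\mapsto s$ (well-defined since $ms=0$), to exhibit $s$ in the image of a map $B\to R$.
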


\begin{proof}
Since $B \ne mB$,  $B/m B$ is a nontrivial $R/m$-vector space, so we can find a surjective morphism from $B/m B$ to $R/m$. In particular we have a surjection $B \to R/m$. If $x$ is an element of the socle of $R$, then there is a map from $B\to R/m \to R$ that first sends $B$ onto $R/m$ and then to $R/(xm) \cong R$ via multiplication by $x$. Some element of $B$ maps to $1$ in $R/m$, and this maps to $x$ in $R$. From this we see that $\text{soc}(R) \subseteq \intr_B(R)$. 
\end{proof}

\begin{corollary}
Let $R$ be a local ring and $B$ an $R$-module such that $B/mB$ is nonzero. If $B$ is finitely-presented or $R$ is complete then $\text{soc}(R) \subseteq \tau_B(R)$.
\end{corollary}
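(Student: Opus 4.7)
The plan is to chain together two results already established in the excerpt: Lemma \ref{interiorcontainssocle} and Theorem \ref{thm: test = int}. Lemma \ref{interiorcontainssocle} gives, under the sole hypothesis that $B/mB \neq 0$, the containment $\mathrm{soc}(R) \subseteq \intr_B(R)$. Theorem \ref{thm: test = int} states that when $R$ is local and either $B$ is finitely-presented or $R$ is complete, we have the equality $\tau_B(R) = \intr_B(R)$. So under exactly the hypotheses of the corollary, both of these apply simultaneously.

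The proof would therefore read as follows: under the hypothesis that $B/mB \neq 0$, apply Lemma \ref{interiorcontainssocle} to conclude $\mathrm{soc}(R) \subseteq \intr_B(R)$. Then, since by assumption either $B$ is finitely-presented or $R$ is complete, Theorem \ref{thm: test = int} gives $\intr_B(R) = \tau_B(R)$. Combining the two, we obtain $\mathrm{soc}(R) \subseteq \tau_B(R)$.

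There is essentially no obstacle here; this is a direct corollary in the strictest sense, a two-line deduction that simply records the consequence of the lemma via the trace-ideal reinterpretation of the test ideal. The only thing worth checking is that the hypotheses on $B$ for the two invocations are compatible: Lemma \ref{interiorcontainssocle} requires only $B/mB \neq 0$ and makes no assumption of finite presentation or completeness, so it poses no constraint beyond what is already assumed, and the hypotheses of Theorem \ref{thm: test = int} are exactly those of the corollary. Hence the composition goes through with no extra work.
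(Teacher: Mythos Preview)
Your proposal is correct and takes exactly the same approach as the paper, which simply states that the result follows from Theorem \ref{thm: test = int} and Lemma \ref{interiorcontainssocle}.
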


\begin{proof}
This follows from Theorem \ref{thm: test = int} and Lemma \ref{interiorcontainssocle}.
\end{proof}

As a consequence of these results, when $R$ is zero-dimensional, we can say exactly what the singular test ideal is.

\begin{theorem}
Let $(R,m)$ be an Artinian local ring. Then $\bigcap_{B \in CM} \intr_B(R)$ is nonzero. In fact, \[\bigcap_{B \in CM} \intr_B(R) = \text{soc}(R)=\tau_{sing}(R).\] 
Hence if $m \ne 0$, then $\bigcap_{B \in CM} \intr_B(R) \ne 0.$
\end{theorem}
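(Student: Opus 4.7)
The plan is to prove both equalities by establishing two containments for $\bigcap_{B \in CM} \intr_B(R) = \text{soc}(R)$, and then passing from $\intr$ to $\tau$ via Theorem \ref{thm: test = int}. The crucial observation is that in the Artinian case the class $CM$ is large enough to include $R/m$ itself, and this single module already pins down the intersection.

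First I would establish the containment $\text{soc}(R) \subseteq \bigcap_{B \in CM} \intr_B(R)$. By definition of big Cohen-Macaulay, every $B \in CM$ satisfies $m B \ne B$, so $B/mB$ is nonzero. Lemma \ref{interiorcontainssocle} then gives $\text{soc}(R) \subseteq \intr_B(R)$, and intersecting over all $B \in CM$ yields the claim.

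For the reverse containment, I would exhibit a single module $B_0 \in CM$ with $\intr_{B_0}(R) = \text{soc}(R)$. The natural choice is $B_0 = R/m$: since $\dim R = 0$ the system-of-parameters condition for big Cohen-Macaulayness is vacuous, and $m(R/m) = 0 \ne R/m$, so $R/m$ is a big Cohen-Macaulay module; it is clearly a quotient of $R$, realized as $R^S$ for a single-element $S \subseteq \text{Bas}$, so it lies in the small category of Remark \ref{basremark}. A direct check shows $\intr_{R/m}(R) = \text{soc}(R)$: any $\phi \colon R/m \to R$ is determined by $\phi(\bar 1)$, and well-definedness forces $\phi(\bar 1) \in \text{soc}(R)$, while conversely each $s \in \text{soc}(R)$ arises as the image of the well-defined map $\bar r \mapsto rs$. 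Hence $\bigcap_{B \in CM} \intr_B(R) \subseteq \intr_{R/m}(R) = \text{soc}(R)$, which combined with the previous paragraph proves the first equality.

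For the equality $\bigcap_{B \in CM} \intr_B(R) = \tau_{sing}(R)$, I would invoke the fact that an Artinian local ring is automatically complete, so Theorem \ref{thm: test = int} gives $\tau_B(R) = \intr_B(R)$ for every $B \in CM$; intersecting over $B$ produces $\tau_{sing}(R) = \bigcap_{B \in CM} \intr_B(R)$. The final assertion then follows because a nonzero Artinian local ring always has a nonzero socle (any nonzero module of finite length contains a simple submodule, which must be annihilated by $m$). The only conceptual point worth flagging, rather than a genuine obstacle, is the verification that zero-dimensionality makes the big Cohen-Macaulay condition trivial enough that the residue field itself qualifies as a member of $CM$; once this is noted, the argument reduces to the elementary computation of $\intr_{R/m}(R)$.
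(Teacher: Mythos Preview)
Your proof is correct and follows essentially the same approach as the paper: both directions of the first equality are established via Lemma~\ref{interiorcontainssocle} and the observation that $k=R/m$ itself lies in $CM$ when $\dim R=0$, and the identification with $\tau_{sing}(R)$ is obtained from Theorem~\ref{thm: test = int} (the paper phrases this step via Corollary~\ref{testidealoffamilyintermsofint}, using that Artinian local rings are complete). Your additional remarks verifying that $R/m$ really belongs to the category $CM$ of Remark~\ref{basremark} and computing $\intr_{R/m}(R)$ explicitly are helpful elaborations of points the paper leaves implicit.
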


\begin{proof}
By Lemma \ref{interiorcontainssocle}, we know that for each $B \in CM$, $\intr_B(R) \supseteq \text{soc}(R)$. Hence 
\[\bigcap_{B \in CM} \intr_B(R) \supseteq \text{soc}(R).\]

For the other inclusion, note that since $R$ is zero dimensional, $k=R/m$ is a \CM\ module. The image of any map from $k$ to $R$ lives in $\text{soc}(R)$. So $\intr_k(R) \subseteq \text{soc}(R)$. Hence
\[\bigcap_{B \in CM} \intr_B(R) \subseteq \intr_k(R) \subseteq \text{soc}(R).\]

The second equality follows from Corollary \ref{testidealoffamilyintermsofint}.
\end{proof}

In the one-dimensional case, we prove that $\tau_{MCM}(R)=\intr_{MCM}(R) \ne 0$ under the hypothesis that $R$ is analytically unramified (i.e., its completion is reduced).
We use several definitions from \cite[Chapter 4]{LeuschkeWiegandCohenMacaulayRepresentations} .

\begin{definition}
Let $R$ be a domain of dimension one (so $R$ is \CM), let $K$ be the fraction field of $R$, and let $\bar{R}$ be the integral closure of $R$ in $K$. The conductor $ \mathfrak{c} = (R:_R \bar{R})$ is the largest common ideal of $R$ and $\bar{R}$, and is nonzero.
\end{definition}

If $M$ is a \fg\  \CM\ $R$-module, then $M$ is torsion-free. We use $\bar{R}M$ to denote the $\bar{R}$-submodule of $K \otimes_R M$ generated by  $\IM(M \to K \otimes_R M)$.  
This module is $\bar{R}$-projective \cite[Chapter 4]{LeuschkeWiegandCohenMacaulayRepresentations}. 

\begin{proposition}
Let $R$ be a local domain of dimension 1 (hence \CM). Then for any \fg\ $R$-module $M$ we have $\intr_M(R)\supseteq (R:\bar{R})$. This implies that $\intr_{MCM}(R) \supseteq (R:\bar{R})$. If $R$ is analytically unramified (in particular if $R$ is complete), then $\intr_M(R)$ and $\intr_{MCM}(R) \neq 0$.
\end{proposition}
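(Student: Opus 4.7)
The plan is to reduce to the case where $M$ is torsion-free and nonzero, since any $R$-linear map $M \to R$ kills the torsion submodule $T$ of $M$ (as $R$ is a domain), giving $\intr_M(R) = \intr_{M/T}(R)$; if $M/T = 0$ the containment is vacuous. Once $M$ is torsion-free, invoke the structural fact, recorded just before the proposition, that $\bar{R}M$ is a finitely generated projective $\bar{R}$-module. The guiding observation is that for any $c \in (R:\bar{R})$, multiplication by $c$ carries $\bar{R}$ into $R$, so for every $\bar{R}$-linear map $\phi \colon \bar{R}M \to \bar{R}$ the assignment $m \mapsto c\phi(m)$ is an honest $R$-linear map $M \to R$ with image inside $c\bar{R} \subseteq R$.

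The substantive step is to identify the trace ideal $\sum_\phi \phi(\bar{R}M) \subseteq \bar{R}$ with all of $\bar{R}$, and then to transfer this to evaluations at elements of $M$. For the first, $\bar{R}M$ sits inside $K \otimes_R M$, on which $\bar{R} \subseteq K$ acts faithfully, so $\bar{R}M$ is a faithful finitely generated projective $\bar{R}$-module; by the dual basis lemma its trace ideal equals $\bar{R}$. Writing $1 = \sum_i \phi_i(y_i)$ with $y_i \in \bar{R}M$ and expanding each $y_i = \sum_j a_{ij} m_{ij}$ with $a_{ij} \in \bar{R}$ and $m_{ij} \in M$ gives $1 = \sum_{i,j} (a_{ij}\phi_i)(m_{ij})$. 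Multiplying through by $c$ and noting that $c a_{ij} \in c\bar{R} \subseteq R$ exhibits $c = \sum_{i,j}(c a_{ij} \phi_i|_M)(m_{ij})$ as a finite sum of images of genuine $R$-linear maps $M \to R$, so $c \in \intr_M(R)$. Intersecting over MCM modules immediately yields $\intr_{MCM}(R) \supseteq (R:\bar{R})$.

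For the final claim I would invoke the classical equivalence: a one-dimensional Noetherian local domain is analytically unramified if and only if $\bar{R}$ is finitely generated as an $R$-module. Taking module generators $\bar{r}_1, \ldots, \bar{r}_n \in K$ and writing each $\bar{r}_i = a_i/b_i$ with $b_i \in R \setminus \{0\}$, the product $d = b_1 \cdots b_n$ is a nonzero element of $(R:\bar{R})$, so the containments already established force both $\intr_M(R)$ and $\intr_{MCM}(R)$ to be nonzero.

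The main technical obstacle is bridging between the dual basis relation on $\bar{R}M$, where the trace ideal lives naturally, and evaluations at elements of $M$, which are what actually populate $\intr_M(R)$. The resolution is that $M$ generates $\bar{R}M$ as an $\bar{R}$-module and that $\bar{R}$-scalars can be absorbed into an $\bar{R}$-linear functional; apart from this rewriting, everything else is either formal reduction or a standard citation (faithful finitely generated projectives have unit trace ideal, and fg-ness of $\bar{R}$ over $R$ characterizes analytic unramifiedness in dimension one).
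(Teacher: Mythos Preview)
Your argument for the Cohen--Macaulay (equivalently torsion-free, nonzero) case is correct and is essentially the paper's proof: the paper observes that $\bar{R}M$ is a nonzero finitely generated projective $\bar{R}$-module, takes a single surjection $\phi:\bar{R}M\twoheadrightarrow\bar{R}$, writes $1=\phi(\sum_j \bar r_j m_j)$, and then for $c\in(R:\bar R)$ uses the $R$-linear maps $m\mapsto c\bar r_j\phi(m)$ to exhibit $c$ in $\intr_M(R)$. Your version via the dual basis lemma and the expansion $1=\sum (a_{ij}\phi_i)(m_{ij})$ is the same mechanism, and your final paragraph on analytic unramifiedness matches the paper's citation that $\bar R$ is module-finite over $R$ in that case.

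There is, however, a genuine slip in your reduction step. You say that if $M/T=0$ the containment is vacuous, but it is not: when $M$ is torsion (e.g.\ $M=R/\mathfrak m$), you correctly compute $\intr_M(R)=\intr_{M/T}(R)=\intr_0(R)=0$, and then the desired containment $0\supseteq (R:\bar R)$ fails whenever the conductor is nonzero. So the statement, read literally for \emph{all} finitely generated $M$, is false and cannot be rescued by your reduction. The paper's own proof begins ``Let $M$ be a finitely-generated CM module over $R$,'' so the phrase ``any finitely-generated $R$-module'' in the statement is evidently a typo for ``any finitely-generated CM $R$-module''; once you restrict to that case your reduction step is unnecessary and the rest of your argument goes through.
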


\begin{proof}
Let $M$ be a \fg\ \CM\ module over $R$. Then $\bar{R}M$ is a projective module over the regular ring $\bar{R}$. It follows that there is a surjective $\bar{R}$-linear map $\phi:\bar{R}M\to \bar{R}$. In particular, there exist $\bar{r}_j \in \bar{R}$ and $m_j \in M$ such that $\phi\left(\sum_{j=1}^n \bar{r}_jm_j\right)=1$.
Therefore for $c \in (R:\bar{R})$, the map $c \cdot \phi$ sends $ \sum_j (c\bar{r}_jm_j) \mapsto c$ and has image in $R$, so we conclude that $  (R:\bar{R}) \subseteq \intr_M(R)$.

The statement that $\intr_{MCM}(R) \supseteq (R:\bar{R})$ follows immediately.

For the last part of the result, note that if $R$ is analytically unramified, then $\bar{R}$ is module-finite over $R$, which implies that $(R:\bar{R})$ is nonzero \cite[Chapter 4]{LeuschkeWiegandCohenMacaulayRepresentations}
\end{proof}

We now discuss the test ideal given by the family of big \CM\ $R$-algebras. The following result of Hochster indicates that tight closure on \fg\ $R$-modules comes from big \CM\ $R$-algebras. Our study of the test ideal coming from the family of big \CM\ algebras is motivated by the view that big \CM\ algebras are a useful tight closure replacement in all characteristics.

\begin{theorem}[{\cite[Theorem 11.1]{solidclosure}}]
\label{tightclosurebigcmalgclosure}
Let $R$ be a complete local domain of \charp, and let $N \subseteq M$ be \fg\ $R$-modules. Then $N_M^*$, the tight closure of $N$ in $M$, is equal to the set of elements $u \in M$ that are in $N_M^{\cl_B}$ for some big \CM\ algebra $B$.
\end{theorem}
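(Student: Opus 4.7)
The plan is to prove the two inclusions $N_M^* \subseteq \bigcup_B N_M^{\cl_B}$ and $N_M^{\cl_B} \subseteq N_M^*$ (for each big \CM\ algebra $B$) separately, drawing on deeper results in the characteristic $p$ theory.

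For the first inclusion, I would take $B = R^+$, the absolute integral closure of $R$. Hochster and Huneke established that $R^+$ is a big \CM\ algebra whenever $R$ is a complete local domain of \charp, so it suffices to show $N_M^* \subseteq N_M^{\cl_{R^+}}$. Given $u \in N_M^*$, there is a test element $c$ (nonzero and not in any minimal prime) with $cu^{p^e}$ lying in the Frobenius twist of $N$ inside the Frobenius twist of $M$ for all $e \gg 0$. Fixing generators of $N$ and $M$, this unwinds to concrete equations one can rearrange after extracting $p^e$-th roots in $R^+$ to conclude that $1 \otimes u$ lies in $\IM(R^+ \otimes_R N \to R^+ \otimes_R M)$. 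Effectively this amounts to invoking the equality of tight closure and plus closure for \fg\ modules over complete local domains.

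For the reverse inclusion, let $B$ be any big \CM\ algebra over $R$. Since $R$ is a complete local domain, $B$ is solid, i.e.\ $\Hom_R(B, R) \neq 0$. My approach would be to show that $\cl_B$ is contained in Hochster's solid closure, and then invoke Hochster's identification of solid closure with tight closure in \charp. Concretely, given a relation $1 \otimes u = \sum_i b_i \otimes n_i$ in $B \otimes_R M$, one uses the Frobenius structure on $B$ together with a uniformly chosen test element for $R$ to promote this single relation into a family of relations $c u^{p^e} \in N_M^{[p^e]}$ witnessing $u \in N_M^*$.

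The main obstacle will be the reverse inclusion, as it rests on the nontrivial identification of solid closure with tight closure in \charp, combined with the production of a single test element that works uniformly across all Frobenius powers. The former is a central result of Hochster's solid closure paper, argued via a cocycle / local cohomology analysis of $H^d_\m(B)$ (this is where ``solidity'' is really exploited); the latter is the standard existence theorem for test elements in complete local reduced rings of \charp\ due to Hochster and Huneke, which bypasses the need to custom-tune $c$ to each $e$.
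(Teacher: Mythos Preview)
The paper does not supply a proof of this theorem; it is quoted verbatim from Hochster's solid closure paper and used as a black box. So there is no ``paper's own proof'' to compare against, and your proposal should be measured against Hochster's original argument.

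Your treatment of the inclusion $N_M^{\cl_B} \subseteq N_M^*$ is essentially correct and matches Hochster: over a complete local domain, any big \CM\ algebra $B$ is solid, so $\cl_B$ is contained in solid closure, and in \charp\ solid closure coincides with tight closure.

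The forward inclusion, however, has a genuine gap. You propose to take $B=R^+$ and reduce to the statement that tight closure equals plus closure for \fg\ modules over complete local domains of \charp. That equality is not a theorem; it is a well-known open problem (Smith's theorem gives it for parameter ideals, and there are further partial results, but the general case is unresolved). You cannot invoke it here. Hochster's actual argument for $N_M^* \subseteq \bigcup_B N_M^{\cl_B}$ does not go through $R^+$: given $u \in N_M^*$, he passes to the generic forcing algebra $S$ for the datum $(N,M,u)$, observes that the tight closure hypothesis forces $S$ to be solid over $R$, and then uses the structural result (the real content of this part of the solid closure paper) that in \charp\ every solid algebra over a complete local domain maps to a big \CM\ algebra, via iterated algebra modifications. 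The big \CM\ algebra $B$ so produced depends on $u$, and there is no claim that a single $B$ (such as $R^+$) works uniformly.
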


\begin{definition}
Let $CMA$ be the subcategory of \CM\ $R$-algebras with basis elements obtained from \textit{Bas} as described in Remark \ref{basremark}. We define 
\[\tau_{CMA}(R) = \bigcap_{B \in CMA} \tau_B(R).\] 
We can also define the finitistic version, 
\[\tau_{CMA}^{fg}=\bigcap_{B \in CMA} \tau_{B}^{fg}(R).\]
\end{definition}

The following result indicates why big \CM\ algebra test ideals are a good tight closure replacement.

\begin{theorem}
\label{taufgequalsfinitistictestideal}
Let $R$ be a complete local domain of \charp. Then $\tau_{CMA}^{fg}(R)$ as defined above is equal to the finitistic tight closure test ideal.
\end{theorem}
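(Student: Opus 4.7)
The plan is to unwind both ideals as intersections over finitely generated submodule pairs, interchange intersections with colons, and apply Theorem \ref{tightclosurebigcmalgclosure} to identify tight closure with a union of $\cl_B$-closures. By the standard argument (via semi-residuality of tight closure on f.g.\ modules),
\[
\tau_*^{fg}(R) \;=\; \bigcap_{\substack{N \subseteq M \\ M/N \text{ f.g.}}} (N :_R N_M^*),
\]
while by definition $\tau_{CMA}^{fg}(R) = \bigcap_{B \in CMA}\, \bigcap_{N \subseteq M,\; M/N \text{ f.g.}} (N :_R N_M^{\cl_B})$. Interchanging the two intersections and using the elementary identity $\bigcap_\alpha (N :_R X_\alpha) = (N :_R \sum_\alpha X_\alpha)$, the desired equality reduces to the module-theoretic claim that, for every such f.g.\ pair,
\[
\sum_{B \in CMA} N_M^{\cl_B} \;=\; N_M^*.
\]

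The inclusion $\subseteq$ is immediate from Theorem \ref{tightclosurebigcmalgclosure}, which gives $N_M^{\cl_B} \subseteq N_M^*$ for every big \CM\ $R$-algebra $B$. For the reverse inclusion, given $u \in N_M^*$, Theorem \ref{tightclosurebigcmalgclosure} produces some big \CM\ $R$-algebra $B_u$ and a witness relation $1 \otimes u = \sum_i b_i \otimes n_i$ in $B_u \otimes_R M$, where $n_1,\ldots,n_s$ is a chosen finite generating set of $N$. The main obstacle is that $B_u$ need not lie in the restricted category $CMA$, whose objects have generator cardinality capped by the fixed countable set $Bas$ (Remark \ref{basremark}).

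I would resolve this in two steps. First, writing $B_u$ as the direct limit of its finitely generated $R$-subalgebras and using that tensor products commute with direct limits, the witness relation already holds in $C_0 \otimes_R M$ for some finitely generated $R$-subalgebra $C_0 \subseteq B_u$ containing the $b_i$; thus $u \in N_M^{\cl_{C_0}}$, although $C_0$ is typically not big \CM. Second, I would apply Hochster's iterated partial algebra modifications procedure starting from $C_0$: since a big \CM\ $R$-algebra exists (for instance $R^+$ in \charp, by \cite{HochsterHunekeInfiniteIntegralExtensions}), this procedure terminates in a countably generated big \CM\ $R$-algebra $D$ equipped with an $R$-algebra map $C_0 \to D$. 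Functoriality of module closures under algebra maps then yields $u \in N_M^{\cl_D}$, and since $D$ is countably generated it lies in $CMA$. This establishes $N_M^* = \sum_{B \in CMA} N_M^{\cl_B}$ for every f.g.\ pair, and chasing back through the colons gives $\tau_{CMA}^{fg}(R) = \tau_*^{fg}(R)$.
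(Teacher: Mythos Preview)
Your approach is essentially the paper's: both reduce to comparing $N_M^{\cl_B}$ with $N_M^*$ via Theorem~\ref{tightclosurebigcmalgclosure}, then pass to colons/annihilators and intersect. The paper phrases the harder direction via $0_M^*$ and finitely many $B_{1,M},\ldots,B_{n_M,M}$ (which suffices since $0_M^*$ is finitely generated over the Noetherian ring $R$); your formulation $\sum_{B \in CMA} N_M^{\cl_B} = N_M^*$ for each \fg\ pair is equivalent.

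You are in fact more careful on a point the paper glosses over: the paper simply asserts that the witnessing algebras $B_{i,M}$ lie in $CMA$, without addressing the countability bound of Remark~\ref{basremark}. Your two-step descent---first pass to a finitely generated subalgebra $C_0 \subseteq B_u$ carrying the witness relation, then build a countably generated big \CM\ algebra $D$ receiving an $R$-algebra map from $C_0$---supplies exactly this justification. Two small corrections to your wording: the modification procedure does not ``terminate'' but is a countable direct limit; and the reason the limit $D$ satisfies $mD \neq D$ is not the bare existence of $R^+$, but rather that the inclusion $C_0 \hookrightarrow B_u$ extends through every modification (since the relations being killed already die in $B_u$), yielding a map $D \to B_u$. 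In other words, $C_0$ is a seed in the sense of \cite{dietzseeds} because it maps to the big \CM\ algebra $B_u$, and that is what makes the construction go through. With that adjustment your argument is sound.
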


\begin{proof}
By Theorem \ref{tightclosurebigcmalgclosure}, for \fg\ $R$-modules $N \subseteq M$, $N_M^{\cl_B} \subseteq N_M^*$ for every big \CM\ algebra $B$. Hence for each $B \in CMA$, $\tau_{\cl_B}^{fg}(R) \supseteq \tau_*^{fg}(R)$. This implies that
\[\bigcap_{B \in CMA} \tau_{B}^{fg}(R) \supseteq \tau_*^{fg}(R).\]
For the other direction, note that for each \fg\ $R$-module $M$, there exist 
\[B_{1,M},\ldots,B_{n_M,M} \in CMA\] such that $0_M^* \subseteq 0_M^{\cl_{B_{1,M}}} + \ldots + 0_M^{\cl_{B_{n,M}}}.$  Hence 
\[\bigcap_{i=1}^{n_M} \Ann_R 0_M^{\cl_{B_{i,M}}} \subseteq \Ann_R 0_M^*.\]
This implies that
\[\bigcap_{M \text{ f.g.}} \bigcap_{i=1}^{n_M} \Ann_R 0_M^{\cl_{B_{i,M}}} \subseteq \bigcap_{M \text{ f.g.}} \Ann_R 0_M^*.\]
But the left hand side contains
\[\bigcap_{M \text{ f.g.}} \bigcap_{B \in CMA} \Ann_R 0_M^{\cl_B}=\bigcap_{B \in CMA} \bigcap_{M \text{ f.g.}} \Ann_R 0_M^{\cl_B}=\bigcap_{B \in CMA} \tau_B^{fg}(R),\]
and the right hand side is equal to $\tau_*^{fg}(R)$. Hence
\[\bigcap_{B \in CMA} \tau_B^{fg}(R) \subseteq \tau_*^{fg}(R),\]
which gives us equality.
\end{proof}

This result only concerns the finitistic test ideal because it is unknown whether tight closure and big \CM\ algebras give the same closure operation on all $R$-modules, or even the same big test ideal. We are still able to get the following consequence:

\begin{corollary}
\label{bigcmalgebratestideals}
Let $R$ be a complete local domain of \charp. Then $R$ is weakly F-regular (all \fg\ $R$-modules are tightly closed) if and only if $\tau_{B}^{fg}(R)=R$ for all big \CM\ algebras $B$.
\end{corollary}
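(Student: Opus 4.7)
The plan is to reduce both directions to the identity $\tau_{CMA}^{fg}(R)=\tau_*^{fg}(R)$ established in Theorem \ref{taufgequalsfinitistictestideal}, together with the fact that $R$ is weakly F-regular if and only if $\tau_*^{fg}(R)=R$, which is immediate from the definitions of weak F-regularity and the finitistic tight closure test ideal.

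For the forward direction, suppose $R$ is weakly F-regular, so every \fg\ $R$-module is tightly closed. Let $B$ be any big \CM\ $R$-algebra (not necessarily in $CMA$). By Theorem \ref{tightclosurebigcmalgclosure}, for every pair of \fg\ $R$-modules $N\subseteq M$ one has the pointwise inclusion $N_M^{\cl_B}\subseteq N_M^*$, since tight closure of a \fg\ module is the union of the $\cl_{B'}$-closures over big \CM\ algebras $B'$. Weak F-regularity gives $N_M^*=N$, so $N_M^{\cl_B}=N$. In particular $\Ann_R(0_M^{\cl_B})=R$ for every \fg\ $R$-module $M$, and by (the \fg\ form of) Lemma \ref{testidealall0} this gives $\tau_B^{fg}(R)=R$.

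For the reverse direction, assume $\tau_B^{fg}(R)=R$ for every big \CM\ $R$-algebra $B$. Restricting the hypothesis to $B\in CMA$ yields
\[
\tau_{CMA}^{fg}(R)=\bigcap_{B\in CMA}\tau_B^{fg}(R)=R.
\]
By Theorem \ref{taufgequalsfinitistictestideal}, this equals the finitistic tight closure test ideal, so $\tau_*^{fg}(R)=R$, which says that every \fg\ $R$-module is tightly closed, i.e.\ $R$ is weakly F-regular.

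I expect the only real subtlety to be the set-theoretic mismatch between "all big \CM\ algebras $B$" in the statement and the small subcategory $CMA$ used in Theorem \ref{taufgequalsfinitistictestideal}; this is handled asymmetrically in the two directions, using the containment $\cl_B\subseteq{}^*$ on \fg\ modules for arbitrary $B$ in the forward direction, and simply restricting to $B\in CMA$ in the reverse direction. No further computation is needed beyond invoking these two theorems.
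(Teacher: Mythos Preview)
Your proof is correct and follows essentially the approach the paper intends: the corollary is stated immediately after Theorem~\ref{taufgequalsfinitistictestideal} without an explicit proof, relying on the identification $\tau_{CMA}^{fg}(R)=\tau_*^{fg}(R)$ together with the standard equivalence between weak F-regularity and $\tau_*^{fg}(R)=R$. Your explicit handling of the set-theoretic mismatch---using Theorem~\ref{tightclosurebigcmalgclosure} directly for arbitrary $B$ in the forward direction, and restricting to $B\in CMA$ in the reverse---is a correct refinement that the paper leaves implicit.
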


If $R$ is a complete local domain of equal characteristic, Dietz and R.G. \cite{dietzseeds,dietzrg} construct a directed family of big \CM\ algebras, i.e., a family of big \CM\ $R$-algebras such that given big \CM\ algebras $B$ and $B'$, there is a big \CM\ algebra $C$ and $R$-algebra maps $B,B' \to C$ that give rise to the following commutative diagram, where the maps $R \to B$ and $R \to B'$ send $1 \mapsto 1$:
\[
\begin{CD}
B @>>> C \\
@AAA @AAA \\
R @>>> B' \\
\end{CD}
\]
 In \charp, this includes all big \CM\ $R$-algebras; in \echarz, this includes all big \CM\ $R$-algebras that are ultrarings. In these cases, we use the closure operation given by the family of big \CM\ $R$-algebras to define the test ideal.

\begin{definition}
Let $W$ be an infinite set with a non-principal ultrafilter $\mathcal{W}.$ For each $w \in W$, take a ring $A_w$. The ultraproduct $A_\natural$ of the $A_w$ (with respect to $\mathcal{W}$) is the quotient $(\Pi_w A_w)/I_{null}$, where $I_{null}$ is the ideal of elements $(x_w)_{w \in W}$ of $\Pi_w A_w$ where $x_w=0$ for all $w$ in some subset $V$ of $W$ contained in $\mathcal{W}$. Any such ring $A_\natural$ is called an ultraring.
\end{definition}

For our purposes, we will be dealing with rings of \echarz\ that are ultraproducts of rings of \charp, as in \cite{dietzrg}.

\begin{theorem} {\cite[Theorem 8.4]{dietzseeds}}
Let $R$ be a complete local domain of positive characteristic. If $B$ and $B'$ are big \CM\ $R$-algebras, then there is an $R$-algebra map $B \otimes B' \to C$ for some big \CM\ algebra $C$.
\end{theorem}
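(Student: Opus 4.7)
The plan is to realize $C$ as the direct limit of a sequence of ``algebra modifications'' applied to $S := B \otimes_R B'$, following Hochster's classical construction of big \CM\ algebras from seeds. The first order of business is to check that $S$ is nonzero modulo $\m$: indeed, $S/\m S \cong (B/\m B) \otimes_{R/\m} (B'/\m B')$ is a tensor product of two nonzero vector spaces over the residue field, hence nonzero. This in particular gives $1 \notin \m S$.

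Next I would iteratively modify $S$ as follows. For each partial \sop\ $x_1,\ldots,x_{k+1}$ of $R$ and each ``bad relation'' $x_{k+1}\eta = x_1\eta_1 + \cdots + x_k\eta_k$ in the current algebra (where $\eta$ should be forced into $(x_1,\ldots,x_k)$), adjoin indeterminates $Y_1,\ldots,Y_k$ modulo the relation $\eta = x_1 Y_1 + \cdots + x_k Y_k$. Iterating this modification over all partial \sops\ and all bad relations (including those created by earlier steps) yields, in the direct limit, an $R$-algebra $C$ in which every \sop\ of $R$ becomes a regular sequence. Provided the limit satisfies $\m C \neq C$, it is a big \CM\ $R$-algebra receiving a map from $S$, which in turn receives canonical $R$-algebra maps from both $B$ and $B'$, as required.

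The crux, and the main obstacle, is to verify that no finite sequence of these modifications forces $1 \in \m \cdot S'$. This is exactly where \charp\ enters. The classical Hochster strategy is to show that any equation witnessing $1 \in \m \cdot S'$ can be ``flattened'' back into the original $S$ after applying the Frobenius endomorphism sufficiently many times: one uses Frobenius to clear denominators in the $Y_i$'s introduced at each modification step, trading high-degree monomials in the $Y_i$'s for high Frobenius powers of the coefficients, which live back in $S$. The big \CM\ properties of $B$ and $B'$ enter in controlling the relations this produces on elements of $S$, so that any such equation ultimately yields $1 \in \m \cdot S$, contradicting the first step.

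The delicate bookkeeping is in tracking the monomial degrees of the $Y_i$'s under iterated Frobenius and in leveraging the fact that each of $B$ and $B'$ already satisfies colon-capturing for \sops\ of $R$. Once this Frobenius descent argument is in place, every finite modification $S \to S'$ still satisfies $1 \notin \m S'$, the direct limit $C$ satisfies $\m C \neq C$, and by construction every \sop\ of $R$ is regular on $C$, completing the proof. I expect the bulk of the technical work to live precisely in this Frobenius descent, which is the sole reason the argument is confined to \charp.
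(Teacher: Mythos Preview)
The paper does not prove this statement; it is cited without proof from \cite[Theorem 8.4]{dietzseeds}, so there is no in-paper argument to compare against. Your outline follows Hochster's algebra-modification method, and the first step ($S/\m S \neq 0$) is fine. The gap is at the Frobenius descent. You claim that a hypothetical relation $1 \in \m S'$ in a finite modification $S'$ of $S = B \otimes_R B'$ can be pushed down to $1 \in \m S$, but the classical Frobenius argument does not yield that conclusion: it shortens a bad sequence of modifications one step at a time, and the terminal contradiction is a bad \sop\ relation in the \emph{starting algebra} violating that algebra's big-\CM\ property. Here $S$ is not known to be big \CM---that is exactly what is in question---and the bare fact $1 \notin \m S$ is far too weak a substitute (it holds for many $R$-algebras that are not seeds). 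You say the big-\CM\ properties of $B$ and $B'$ ``enter in controlling the relations,'' but you supply no mechanism by which a relation in $S = B \otimes_R B'$ descends to a bad relation in $B$ or in $B'$ individually; that is the missing idea, and it is not routine bookkeeping.

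For comparison, Dietz's actual proof in \cite{dietzseeds} does not run a direct descent on modifications of $B \otimes_R B'$. He first develops a structure theory for seeds, showing in particular that any seed over $R$ maps to an absolutely integrally closed, perfect, quasilocal big \CM\ $R$-algebra domain, and then reduces the tensor-product statement to the case where both factors are of this special form, where perfection and integral closedness can be exploited directly. Your sketch would need a comparable reduction, or an explicit argument passing relations from $B \otimes_R B'$ back to one of the factors, before the Frobenius machinery can bite.
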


\begin{theorem} {\cite[Theorem 3.3]{dietzrg}}
Let $R$ be a local domain of equal characteristic zero, and $B$ and $B'$ big \CM\ $R$-algebras that are also ultrarings (ultraproducts of char $p$ approximations $R_w$ of $R$). Then there is a big \CM\ $R$-algebra $C$ and an $R$-algebra map $B \otimes B' \to C$.
\end{theorem}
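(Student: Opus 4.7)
The plan is to reduce the equal characteristic zero statement to the positive characteristic statement (the preceding Theorem 8.4 of \cite{dietzseeds}) via the ultraproduct construction. Since $B$ and $B'$ are ultrarings over $R$, by hypothesis we may write $B = (B_w)_\natural$ and $B' = (B'_w)_\natural$ with respect to the ultrafilter $\mathcal{W}$ on $W$, where for each $w \in W$, $B_w$ and $B'_w$ are $R_w$-algebras, and where $R = (R_w)_\natural$ realizes $R$ as an ultraproduct of its \charp\ approximations. The content of being a big \CM\ ultraring is that for $\mathcal{W}$-almost every $w$, the algebra $B_w$ is big \CM\ over $R_w$ (and likewise for $B'_w$).

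First, I would apply the preceding Theorem 8.4 of \cite{dietzseeds} componentwise. For each $w$ in a set $V \in \mathcal{W}$ on which both $B_w$ and $B'_w$ are big \CM\ $R_w$-algebras, there exists a big \CM\ $R_w$-algebra $C_w$ together with an $R_w$-algebra map $\psi_w : B_w \otimes_{R_w} B'_w \to C_w$. For $w \notin V$ we may take $C_w = 0$ or anything convenient, since those indices are swallowed by the ultrafilter. Set $C := (C_w)_\natural$, which is naturally an $R$-algebra via the ultraproduct of the structure maps $R_w \to C_w$.

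Next, I would build the $R$-algebra map $B \otimes_R B' \to C$. The universal property of the tensor product gives a canonical $R$-algebra homomorphism
\[
B \otimes_R B' \;=\; (B_w)_\natural \otimes_{(R_w)_\natural} (B'_w)_\natural \;\longrightarrow\; (B_w \otimes_{R_w} B'_w)_\natural,
\]
obtained by sending $[(b_w)] \otimes [(b'_w)] \mapsto [(b_w \otimes b'_w)]$; this is well-defined because the null ideal on one side is absorbed by the tensor relations on the other. Composing with $\psi_\natural := (\psi_w)_\natural : (B_w \otimes_{R_w} B'_w)_\natural \to C$ produces the desired map.

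The main obstacle, and the step I would spend most of my care on, is verifying that $C$ is in fact a big \CM\ $R$-algebra. Let $x_1, \dots, x_d$ be a \sop\ for $R$, and lift it to elements $x_1^{(w)}, \dots, x_d^{(w)} \in R_w$ whose ultraproduct recovers the $x_i$. By \L o\'{s}'s theorem (or its ring-theoretic incarnation used throughout \cite{dietzrg}), for $\mathcal{W}$-almost every $w$ this sequence is a \sop\ for $R_w$; for those indices $x_1^{(w)}, \dots, x_d^{(w)}$ is a regular sequence on $C_w$ and $(x_1^{(w)}, \dots, x_d^{(w)}) C_w \ne C_w$. A standard ultraproduct-of-regular-sequences argument --- taking preimages of a hypothetical relation to witnesses at almost every $w$, invoking regularity there, and then reassembling --- transfers both the regular sequence property and the properness $(x_1, \dots, x_d)C \ne C$ to $C$. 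This last bookkeeping with \L o\'{s}'s theorem is the only genuinely delicate part; everything else is formal.
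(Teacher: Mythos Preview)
The paper does not prove this statement at all: it is quoted verbatim from \cite[Theorem 3.3]{dietzrg} and used as a black box, with no proof or sketch provided here. So there is nothing in the present paper to compare your argument against.

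That said, your sketch is the expected shape of the argument in \cite{dietzrg}: write $B$ and $B'$ as ultraproducts of big Cohen--Macaulay algebras over the characteristic $p$ approximations $R_w$, apply the positive-characteristic result \cite[Theorem 8.4]{dietzseeds} at almost every index $w$ to get $C_w$, and then take $C=(C_w)_\natural$ and transfer the big Cohen--Macaulay property back through the ultraproduct via \L o\'s. The one point worth flagging is that the passage ``big Cohen--Macaulay ultraring over $R$ $\Leftrightarrow$ almost every component is big Cohen--Macaulay over $R_w$'' is not a purely formal consequence of \L o\'s's theorem, since the defining condition quantifies over \emph{all} systems of parameters; in \cite{dietzrg} this is handled by the machinery of nonstandard hulls and approximations developed there, and you are implicitly invoking that machinery rather than proving it. If you intend this as a self-contained proof you would need to justify that step; as a pointer to how the cited proof goes, your outline is accurate.
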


In either case, we can define the test ideal of the directed family as in Definition \ref{familytestideal}.

\begin{corollary}
Let $R$ be a complete local domain of equal characteristic and let $\cal{B}$ be either the set of all big \CM\ $R$-algebras (if $R$ has \charp) or the set of big \CM\ $R$-algebras that are also ultrarings (if $R$ has \echarz), in both cases following the setup of Remark \ref{basremark} to ensure we get a set. Then $\tau_{\cal{B}}(R)$ is equal to the test ideal of the closure $\cl_{\cal{B}}$.
\end{corollary}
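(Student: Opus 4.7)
The plan is to reduce the claim to Proposition \ref{twotestidealsareequal}, which says that whenever $\cal{B}$ is a directed family of $R$-algebras, the ``family'' test ideal $\tau_{\cal{B}}(R)=\bigcap_{B\in\cal{B}}\tau_B(R)$ coincides with the ``closure'' test ideal $\tau_{\cl_{\cal{B}}}(R)$. So the only thing to verify is that the collection $\cal{B}$ described in the statement really is directed under $R$-algebra maps, given that the maps $R\to B$ all send $1\mapsto 1$.

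First, I would handle the \charp\ case. Given $B,B'\in\cal{B}$, Theorem 8.4 of \cite{dietzseeds} (quoted just above the statement) produces a big \CM\ $R$-algebra $C$ together with an $R$-algebra map $B\otimes_R B'\to C$. Composing with the canonical maps $B\to B\otimes_R B'$ (via $b\mapsto b\otimes 1$) and $B'\to B\otimes_R B'$ (via $b'\mapsto 1\otimes b'$) gives $R$-algebra maps $B\to C$ and $B'\to C$ fitting into the commutative diagram required in the definition of a directed family. Replacing $C$ if necessary by an isomorphic copy lying in the set $CMA$ of Remark \ref{basremark} (so that we stay within our fixed universe of modules), we conclude that $\cal{B}$ is a directed family of big \CM\ $R$-algebras.

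Next, I would treat the \echarz\ case. Here $\cal{B}$ is the set of big \CM\ $R$-algebras which are \emph{also} ultrarings, so we need a little more care: we must produce a $C$ that is simultaneously a big \CM\ $R$-algebra \emph{and} an ultraring. This is exactly what Theorem 3.3 of \cite{dietzrg} provides, since the construction there starts from ultraring big \CM\ algebras $B, B'$ and, through the characteristic $p$ approximations $R_w$, outputs an algebra $C$ built as an ultraproduct of big \CM\ algebras over the $R_w$; it therefore inherits the ultraring structure. (This is the one spot where one must read the construction and not only its headline statement, so I expect this to be the main technical point to verify.) As in the previous case, the resulting $R$-algebra maps $B\to B\otimes_R B'\to C$ and $B'\to B\otimes_R B'\to C$, together with a representative of $C$ chosen inside $CMA$, show that $\cal{B}$ is directed.

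Finally, having verified directedness in either case, Proposition \ref{twotestidealsareequal} applies directly and gives
\[
\tau_{\cal{B}}(R)=\tau_{\cl_{\cal{B}}}(R),
\]
which is the claim.
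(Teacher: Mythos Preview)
Your proposal is correct and follows exactly the route the paper intends: the corollary is stated without proof, but the surrounding text makes clear that the two quoted theorems establish directedness of $\cal{B}$ in each case, after which Proposition \ref{twotestidealsareequal} immediately gives the conclusion. Your added care in the \echarz\ case---checking that the $C$ produced by \cite{dietzrg} is itself an ultraring so that it lies in $\cal{B}$---is a point the paper leaves implicit but which is indeed needed for directedness.
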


\section{Examples}
\label{examples}

In this section we compute test ideals and trace ideals. In these examples, we compute $\Hom_R(B,R)$ for various \CM\ modules $B$, and look at the images of these maps in $R$. In the situation of Theorem \ref{thm: test = int}, this gives us the test ideal $\tau_B(R)$, and in general it gives us the trace ideal $\intr_B(R)$.

\begin{example} 
\label{pidexample}
Let $R$ be a complete PID. Then for any family of $R$-modules $\mathcal{F}$ we either have $\intr_{\mathcal{F}}(R)=0$ or $\intr_{\mathcal{F}}(R)=R$. Indeed, if $\intr_{\mathcal{F}}(R)\neq 0$ then it is a principal ideal $I$. Let $I\to R$ be an isomorphism. Composing this isomorphism with the elements of $\Hom_R(\mathcal{F},R)$, whose images add up to all of $I$, we have for each element of $R$ a map from $\mathcal{F} \to R$ whose image includes that element. 
Hence $\intr_{\mathcal{F}}(R)=R$.

If $R$ is also local and $B$ is any big \CM\ $R$-module, $B$ is solid (i.e. $\Hom_R(B,R) \ne 0$), so $\intr_B(R)=R$. Hence $\tau_{sing}(R)=R$.
\end{example}

But this is not always true in the general one-dimensional case, as the following example shows.

\begin{example}
Let $R=k[[t^2,t^3]]$ where $k$ is a field. Let $B=\left< (t^4,t^3),(t^3,t^2) \right>\subset R^2$. This is a \fg\ \CM\ $R$-module. There is no surjective map $B\to R$. Indeed, if there were then there would be $a,b\in R$ such that $(at^4+bt^3,at^3+bt^2)\mapsto 1$. But note that if 
\[e=at^2(t^3,t^2)+b(t^4,t^3) = (at^5+bt^4,at^4+bt^3) \in B\] maps to  $x\in R$, we also have   
\[t^3(at^4+bt^3,at^3+bt^2)\mapsto t^3,\] but  \[t^3(at^4+bt^3,at^3+bt^2)=t^2(at^5+bt^4,at^4+bt^3)=t^2e\mapsto t^2 x.\] This implies that $t^2x=t^3$. However there is no element of $R$ that satisfies this equation.

Now consider the map $B\to R$ given by $(c,d)\mapsto d$. The image of this map is the ideal $\m=(t^2,t^3)$. Hence we can conclude that \[\intr_B(R)=\m.\]
\end{example}

\begin{example}\label{example quadratic surface}
Let $R=\frac{k[[a,b,c]]}{(b^2-ac)}=k[[x^2,xy,y^2]]$, where $k$ is a field. By  \cite[Proposition 1.16]{CohenMacaulayModulesOverCohenMacauayRingsYoshino}  high syzygies ($\dim(R)$ or higher) $K$ of the residue field $k$ are \CM\ modules if they are nonzero, and by Remark \ref{Rmk: Top syz induce nontrivial closures} $\cl=\cl_K$ is non-trivial. Hence $\tau_K(R) \ne R$. Using Macaulay2 we find that the free resolution for the residue field has the form \[R^4\to R^3 \to R \to R/\m \to 0 \]  where the map $R^4\to R^3$ is given by the matrix 
\[
\begin{pmatrix}
-y^2 & -xy & 0 & -y^2 \\
xy & x^2 & -y^2 & 0 \\
0 & 0 & xy & x^2 \\
\end{pmatrix}
\]
Hence $K=\text{syz}_2(k)$ is the $R$-submodule of $R^3$ generated by the columns of this matrix. Let $I=(x^2,y^2)$. Then $\text{rad}(I)=m$. We claim that $xy \in I^{\cl_K}$. Since $I$ is an ideal, $I^{\cl_K}=(IK:K)$. Hence it is enough to show that $xyK \subseteq IK$. Multiplying $xy$ by each of the columns of the matrix above, we have

\[
xy \begin{pmatrix}
-y^2 \\ xy \\ 0 \\
\end{pmatrix}
= \begin{pmatrix}
-xy^3 \\ x^2y^2 \\ 0 \\
\end{pmatrix}
= y^2\begin{pmatrix}
-xy \\ x^2 \\ 0 \\
\end{pmatrix},
\]

\[xy \begin{pmatrix}
-xy \\ x^2 \\ 0 \\
\end{pmatrix}
= \begin{pmatrix}
-x^2y^2 \\ x^3y \\ 0 \\
\end{pmatrix}
= x^2\begin{pmatrix}
-y^2 \\ xy \\ 0 \\
\end{pmatrix},
\]

\[
xy \begin{pmatrix}
0 \\ -y^2 \\ xy \\
\end{pmatrix}
= \begin{pmatrix}
0 \\ -xy^3 \\ x^2y^2 \\
\end{pmatrix}
= -y^2\begin{pmatrix}
-y^2 \\ xy \\ 0 \\
\end{pmatrix}
+y^2\begin{pmatrix}
-y^2 \\ 0 \\ x^2 \\
\end{pmatrix},
\]

\[
xy \begin{pmatrix}
-y^2 \\ 0 \\ x^2 \\
\end{pmatrix}
= \begin{pmatrix}
-xy^3 \\ 0 \\ x^3y \\
\end{pmatrix}
= y^2\begin{pmatrix}
-xy \\ x^2 \\ 0 \\
\end{pmatrix}
+x^2\begin{pmatrix}
0 \\ -y^2 \\ xy \\
\end{pmatrix},
\]
which implies that $xyK \subseteq IK$. Hence $I^{\cl_K}=m$, and so $I:I^{\cl_K}=m$. Therefore,
\[ \tau_{sing}(R) \subseteq \tau_K(R)\subseteq \m. \].
\end{example}

\begin{example}
By an alternate method, we can say exactly what $\tau_{sing}(R)$ is in this case.
Let $R=k[[x^2,xy,y^2]] \subseteq k[[x,y]]=S$, where $k$ is a field. Then $R$ has exactly two indecomposable \fg\ \CM\ modules, $R$ and $M=xR+yR \subseteq S$. By a result of \cite{hochsterleuschkerg}, if $B$ is a big \CM\ module over $R$, then either $R$ or $M$ splits from $B$. Since for any modules $A$ and $N$, $\cl_{A \oplus N}=\cl_{A} \cap \cl_{N}$, this means that $\cl_M$ gives the largest big \CM\ module closure on $R$. So $\tau_M(R)=\tau_{sing}(R)$.

Since $M \cong (x^2,xy)R \cong (xy,y^2)R$, $\tau_{sing}(R)=\tau_M(R)$ must contain $m=(x^2,xy,y^2)R$. However, since $R$ is not regular, $\tau_{sing}(R) \ne R$. Therefore, $\tau_{sing}(R)=m$.
\end{example}

The following example is of a ring with countable \CM\ type whose singular test ideal is not primary to the maximal ideal. This indicates that Proposition \ref{finitecmtypemprimary} does not hold even for fairly nice rings with infinite \CM\ type.

\begin{example}
\label{whitneyumbrella}
Let $R=k[[x,y,z]]/(x^2y+z^2)$, where $k$ is a field of arbitrary characteristic. This ring is known as the $D_\infty$ hypersurface singularity and as the Whitney Umbrella. By \cite[Proposition 14.19]{LeuschkeWiegandCohenMacaulayRepresentations}, this ring has countable \CM\ type and  the isomorphism classes of idecomposable, non-free \fg\ \CM\ modules are obtained as the cokernels of each of the following matrices. 

\begin{itemize}

\item $M=\operatorname{coker} \begin{pmatrix}
z & y \\ 
-x^2 & z
\end{pmatrix},
$

\item $N=\operatorname{coker} \begin{pmatrix}
z & xy \\ 
-x & z
\end{pmatrix}
$

\item $M_j=\operatorname{coker} \begin{pmatrix}
z & 0 & xy & 0 \\ 
0 & z & y^{j+1} & -xy \\
-x & 0 & z & 0 \\
-y^j & x & 0 & z
\end{pmatrix}
$

\item $N_j=\operatorname{coker} \begin{pmatrix}
z & 0 & xy & 0 \\ 
0 & z & y^{j} & -x \\
-x & 0 & z & 0 \\
-y^{j} & xy & 0 & z
\end{pmatrix}
$
\end{itemize}

Let's compute the corresponding test ideals. As the ring $R$ is a complete local domain, by Theorem \ref{thm: test = int} we only need to compute the trace ideal of $R$ with respect to these modules. 

\begin{itemize}

\item $M$ : A map $\phi$ from $M$ to $R$ is the same as a map from $R^2 \to R$ whose kernel contains $<(z,x^2),(-y,z)>$. That is, we must have that $z\phi(e_1)+x^2\phi(e_2)=-y\phi(e_1)+z\phi(e_2)=0$, or in an equivalent way, we want solutions for \[ \begin{pmatrix}
z & -x^2 \\ 
y & z
\end{pmatrix} \begin{pmatrix}
a \\ b
\end{pmatrix} = 0
\]
with $a,b \in R$. We first find the solutions in the fraction field and then determine when they are in $R$. To do this, we row reduce this matrix by multiplying the second row by $x^2$ and then adding the $z$ times the first row, which gives us 
\[ \begin{pmatrix}
z & -x^2 \\ 
0 & 0
\end{pmatrix} \begin{pmatrix}
a \\ b
\end{pmatrix} = 0.
\]
This means that we need $az=bx^2$.  As we want $a,b\in R$, this is equivalent to saying $a \in x^2:z$ and $b \in z:x^2$. It follows that $\tau_M(R)=\intr_M(R)=(x^2:z)+(z:x^2)$. As both ideals are proper, $\intr_M(R)\neq R$. Now, note that from the equation $z^2=-x^2y$ we have that $(x^2,y,z) = \intr_M(R)$.

\item N: A similar procedure implies $\tau_N(R)=\intr_N(R)=(z:x)+(x:z)$, which is equal to $(xy,z)+(z,x)=(x,z)$.

\item $M_j$: After transposing and row reducing we obtain the system 
\begin{align*}
az-cx-dy^j &=0\\
bz+dx &= 0.
\end{align*}
Some possibilities that satisfy this equation are (found in Macaulay2 for particular values of $j$, but easy to check that they are correct for any $j$):
\[
\begin{array}{c | c | c | c}
a & b & c & d \\
\hline
\hline
x & 0 & z & 0 \\
\hline
(-1)^jy^j & x & 0 & (-1)^jz \\
\hline
-z & 0 & xy & 0 \\
\hline
0 & x & -y^{j+1} & xy \\
\end{array}
\]
Hence $(x,y^j,z) \subseteq \tau_{M_j}(R).$ (In fact, computations in Macaulay2 confirm that these choices generate all maps $M_j \to R$, so the two ideals are equal.)

\item $N_j$: As in the previous case, transposing and row reducing we obtain the system 

\begin{align*}
az-cx-dy^j &=0\\
bz+dxy &= 0.
\end{align*}

In particular the following are solutions to this set of equations
\[
\begin{array}{c | c | c | c}
a & b & c & d \\
\hline
\hline
x & 0 & z & 0 \\
\hline
y^j & -xy & 0 & z \\
\hline
-z & 0 & xy & 0 \\
\hline
0 & z & -y^j & x \\
\end{array}
\]
so $(x,y^j,z) \subseteq \tau_{N_j}(R)$. (As with $M_j$, Macaulay2 computations confirm that they are actually equal.)
\end{itemize} 

From this we can conclude that the intersection of $\tau_B(R)$ over all \fg\ \CM\ $R$-modules $B$ is
\[(x^2,y,z) \cap (x,z) \cap (x,y^j,z) \cap (x,y^j,z)=(x^2,xy,z).\]
Notice that this is not primary to the maximal ideal, and so the singular test ideal, which is contained in this ideal, is also not $m$-primary.
\end{example}

Even though we have only defined test ideals for domains, we can compute trace ideals without this hypothesis. In the next example we compute the trace ideal of a non-domain ring with respect to its \fg\ \CM\ modules.

\begin{example}
Let $R=k[x,y,z]/(xz)$, where $k$ is an algebraically closed field of characteristic not equal to 2. 
We will use $i$ to denote $\sqrt{-1}$. In this case $R$ has countably infinite \CM\ type, that is, up to isomorphism, there are countably many indecomposable \fg\ \CM\ $R$-modules. By isomorphism with $k[x,y,z]/(x^2+z^2)$ via
\begin{align*}
x &\mapsto z-ix \\
y &\mapsto y \\
z &\mapsto z+ix,
\end{align*}
we see that this is the same as the example in \cite[Proposition 14.17]{LeuschkeWiegandCohenMacaulayRepresentations}. 
Hence the indecomposable \fg\ \CM\ $R$-modules are given as the cokernels $M_j$ of $\phi_j:R^2\to R^2$, where 
\[ \phi_j =\begin{pmatrix}
z & -y^j \\
0& x
\end{pmatrix}\]
and the cokernels $M_j'$ of $\phi_j':R^2 \to R^2$, where
\[ \phi_j' =\begin{pmatrix}
x & y^j \\
0& z
\end{pmatrix}\]
or as the cokernel, $M$ of $\psi=(x)$ or $M'$ of $\psi=(z)$.

We claim that $\intr_{M_j}(R)=\intr_{M_j'}(R)=(x,y^j,z)$ and that $\intr_M(R)=\intr_{M'}(R)=(x,z)$. 
A map from $M_j \to R$ must send its natural generators to elements $a,b$ satisfying the relations 
\begin{align*}
az &=0\\
-ay^j+bx&=0.
\end{align*}
The first implies that $a=fx$ for some $f \in R$, and so $x(b-fy^j)=0$. This, in turn, implies \[ b= fy^j + gz\] for some $g \in R$. This implies that $a,b \in (x,z,y^j)$.  Now, choosing $f=1$ and $g=0$ gives the solution $a=x$ and $b=y^j$. This implies that $\intr_{M_j}(R)\supseteq (x,y^j)$. Similarly, choosing $f=0$ and $g=1$ gives $\intr_{M_j}(R) \supseteq (z)$, hence $\intr_{M_j}(R)=(x,z,y^j)$. The case of the $M'_j$ is similar.

However, $\intr_M(R)=(z)$ and $\intr_{M'}(R)=(x)$.

This implies that 
\[
\bigcap_{B} \intr_B(R) = (x) \cap (z)=(0),
\]
where the intersection is taken over all \fg\ indecomposable \CM\ $R$-modules.
\end{example}

\begin{remark}
Here $\tau_{MCM}(R)=0$. This supports the need for the domain hypothesis in many of the results of this paper.
\end{remark}

One more example of modules for which we can say something is the following
\begin{example} 

Let $R=k[x,y,z]/(z^2)$ localized at $(x,y,z)$ and set \[M_n= \bigoplus_{i=1}^{2n} k[x,y].\]

We make $M_n$ an $R$-module via 
\[z=\begin{pmatrix}
0 & \Phi \\
0 & 0 
\end{pmatrix}, \] where $\Phi$ is the $n\times n$ matrix \[\Phi=\begin{pmatrix}
x & y & 0 &  0 & \cdots & 0 \\
0 & x & y & 0 & \cdots & 0 \\
0 & 0 & x & y & \cdots & 0 \\
\vdots & & & & & \\
0 & 0 & 0 & 0 & \cdots & y \\
0 & 0 & 0 & 0 & \cdots & x 
\end{pmatrix} \]

By Proposition 3.4 of \cite{LeuschkeWiegandCohenMacaulayRepresentations}, $M_n$ is an indecomposable \CM\ module over $R$ for all $n \ge 2$. We compute $\intr_{M_n}(R)$. Let $e_1,\ldots,e_{2n}$ be the obvious set of generators for $M_n$. For any map $\psi:M_n\to R$ we have that $\psi$ is determined by $\psi(e_i)$. Notice that $z$ has the following action on the $e_i$:
\[
ze_i = \begin{cases} 0 & i \le n \\ xe_1 & i=n+1 \\ ye_{i-n-1}+xe_{i-n} & n<i \le 2n \\ \end{cases}
\]
We have a map $\psi:M_n \to R$ sending
\begin{align*}
e_1 &\mapsto z \\
e_{n+1} &\mapsto x \\
e_{n+2} &\mapsto y \\
e_i &\mapsto 0 \text{ for all other $i$.} \\
\end{align*}
To see that this is an $R$-linear map, we check that the action of $z$ is compatible with the map. We have
$z\psi(e_{n+1})=zx$ and
\[\psi(ze_{n+1})=\psi(xe_1)=x\psi(e_1)=xz.
\]
Additionally, $z\psi(e_{n+2})=zy$ and
\[\psi(ze_{n+2})=\psi(ye_1+xe_2)=y\psi(e_1)+x\psi(e_2)=yz+0=yz,\]
and $z\psi(e_1)=z^2=0=\psi(0)=\psi(ze_1).$
For $1<i<n$, $\psi(ze_i)=\psi(0)=0=z\psi(e_i).$
For $i>n+2$, $ze_i$ is in terms of $e_j$ for $1<j \le n$, so $0=z\psi(e_i)=\psi(ze_i).$

The existence of this map shows that $(x,y,z) \subseteq \intr_{M_n}(R)$ for $n \ge 2$. Hence 
\[(x,y,z) \subseteq \bigcap_{n} \intr_{M_n}(R).\]
To see that these are in fact equal, suppose there is a map $\psi:M_n \to R$ sending $e_i \mapsto 1$ for some $i$. If $i \le n$, we have
\[z=z\psi(e_i)=\psi(ze_i)=0,\]
which is a contradiction.
If $i=n+1$, we have
\[z=z\psi(e_i)=\psi(ze_i)=\psi(xe_1)=x\psi(e_1),\]
which is also a contradiction as $z \not\in (x)R$.
If $i>n+1$, we have
\[z=z\psi(e_i)=\psi(ye_{n-i-1}+xe_{n-i})=y\psi(e_{n-i-1})+x\psi(e_{n-i}),\]
which is a contradiction since $z \not\in (x,y)R$.
Hence $1 \not\in \intr_{M_n}(R)$, which implies that 
\[\intr_{M_n}(R)=(x,y,z).\]
\end{example}

\begin{remark} 
\label{questionifandonlyif} 
Given Proposition \ref{generationandclosures} it is natural to ask whether  $\tau_T(R)\subseteq \tau_S(R)$ if and only if $S$ generates $T$. Note that the ``if'' part follows from Proposition \ref{generationandclosures}. But as Example \ref{Example: tau containment does not imply generation} shows the other direction is false, even in the case of \fg\ \CM\ modules.
\end{remark}

\begin{example} \label{Example: tau containment does not imply generation}
Let $R=k[[x,y,z]]/(xy-z^4)$ where $k$ has characteristic 0 (or most values of $p$ are also fine). We can view $R$ as a subring of $S=k[[s,t]]$, via $x \mapsto s^4$, $y \mapsto t^4$, and $z \mapsto st$. The indecomposable MCM's of $R$ are $R$, $M_1=(s,t^3) \cong (y,z)$, $M_2=(s^2,t^2) \cong (y,z^2)$, and $M_3=(s^3,t) \cong (x,z)$ \cite{LeuschkeWiegandCohenMacaulayRepresentations}. According to Macaulay2, $H=\Hom(M_1,R)=\IM\begin{pmatrix}y & z^3 \\ z & x \\ \end{pmatrix}$, and using the function $\text{homomorphism} (H_{\{i\}})$ for $i=0,1$, we see that the homomorphisms $M \to R$ are as follows: one of them is given by $s \mapsto y$ and $t^3 \mapsto z$, and the other by $s \mapsto z^3$ and $t^3 \mapsto x$.

Similarly, $\Hom(M_3,R)=\IM\begin{pmatrix}x & z^3 \\ z & y \\ \end{pmatrix}$, and the homomorphisms send $s^3 \mapsto x$, $t \mapsto z$, or $s^3 \mapsto z^3$, $t \mapsto y$.

So $\intr_{M_1}(R)=\intr_{M_3}(R)=m$. But $M_1$ and $M_3$ are distinct indecomposable \CM\ $R$-modules, so neither generates the other. As $M_1$ and $M_3$ are \fg\ $R$-modules, $\intr_{M_i}(R) = \tau_{M_i}(R)$ for $i=1,3$, so $M_1$ and $M_3$ are two $R$-modules that give the same test ideal, but neither one generates the other.
\end{example}

\section{Mixed Characteristic}
\label{mixedchar}

Recently, Andr\'{e} proved the existence of big \CM\ algebras in mixed characteristic \cite{andre}. 
We take advantage of this result and of almost big \CM\ algebras as defined by Roberts \cite{robertsalmostcm} and used by Andr\'{e} to define a closure operation in mixed characteristic, and to prove that the corresponding test ideal can be written as a variant on a trace ideal, paralleling our results in previous sections. This demonstrates that the arguments used in earlier sections can be adapted to apply to closures that are variations on module closures.

Our closure is similar to dagger closure as defined by Hochster and Huneke \cite{ElementsSmallOrderIntegralExtensionsHochsterHuneke}. The key difference is that we have replaced $R^+$, the absolute integral closure of $R$, with an arbitrary almost big \CM\ algebra. We are also using small powers of a particular element as our ``test elements", as is usual in working with perfectoid algebras, rather than using arbitrary elements of small order as in \cite{ElementsSmallOrderIntegralExtensionsHochsterHuneke}.

In this section, let $(R,m)$ be a complete local domain of dimension $d>0$ and mixed characteristic $(0,p)$, $T$ a $p$-torsion free algebra, and $\pi \in T$ a \nzd\ such that $T$ contains a compatible system of $p$-power roots of $\pi$, i.e. a set of elements $\{\pipn\}_{n \ge 1}$ such that $(\pipn)^{p^m}=\pip{n-m}$ for all $m \le n$. 
We will denote this system of $p$-power roots of $\pi$ by $\pipinfty$. 

\begin{definition}[{\cite[Definition 4.1.1.3]{andre}}]
$T$ is an {\it almost (balanced) big \CM\ algebra \wrt\ $\pipinfty$} if $T/mT$ is not almost 0 \wrt\ $\pipinfty$ (i.e., it is not the case that $\pipn T/mT=0$ for all $n>0$), and for every \sop\ $x_1,\ldots,x_d$ on $R$, 
\[\pipn \cdot \frac{(x_1,\ldots,x_i):_T x_{i+1}}{(x_1,\ldots,x_i)}=0\]
for all $n>0$, $0 \le i \le d-1$. 
\end{definition}

Andr\'{e} proved the existence of almost big \CM\ algebras as a step on the way to proving the existence of big \CM\ algebras. The reason we have included this ``intermediate" step in our paper (rather than focusing solely on big \CM\ algebras) is that almost mathematics is central to major results in mixed characteristic commutative algebra, and our techniques can be applied to this case. This also connects our results to the recent work of \cite{maschwede} on a mixed characteristic version of a test ideal for pairs in regular rings, which is defined using an almost big \CM\ algebra.

\begin{definition}
\label{mixedcharclosure}
Let $T$ be an almost big \CM\ algebra over $R$. We define a closure operation cl by $u \in N_M^{\cl}$ if for all $n>0$,
\[ \pipn \otimes u \in \IM(T \otimes_R N \to T \otimes_R M).\]
\end{definition}

\begin{proposition}
The closure cl defined above is a Dietz closure. Consequently, $\tau_{\cl}(R)=\Ann_R 0^{\cl}_{E_R(k)}$.
\end{proposition}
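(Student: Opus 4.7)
The proposition has two assertions: that $\cl$ is a Dietz closure, and the formula $\tau_{\cl}(R)=\Ann_R 0^{\cl}_{E_R(k)}$. The second is immediate from Proposition \ref{prop: test = ann} once the first is established, since Dietz closures are in particular functorial and semi-residual. So the entire work is in verifying the seven axioms.

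I would dispatch the easy axioms first. Extension and Order-preservation are tautological. Functoriality follows by applying $1_T \otimes f$ to a witnessing identity $\pipn(1 \otimes u) = \sum_i t_i(1 \otimes v_i)$. Semi-residuality is the observation that $\pipn \otimes \bar{u} = 0$ in $T \otimes (M/N) = (T \otimes M)/\IM(T \otimes N \to T \otimes M)$ is literally the condition $\pipn \otimes u \in \IM(T \otimes N \to T \otimes M)$ that defines $u \in N^{\cl}_M$. For Faithfulness I would argue by contradiction: if some unit $u \in R$ lay in $\mathfrak{m}^{\cl}_R$, then $\pipn u \in \mathfrak{m}T$ for every $n$, hence $\pipn \in \mathfrak{m}T$ for every $n$, meaning $T/\mathfrak{m}T$ is almost zero with respect to $\pipinfty$, contradicting the assumption that $T$ is an almost big \CM\ algebra.

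The crux is Idempotence and Generalized Colon-Capturing, which share a common exponent trick. For Idempotence, given $u \in (N^{\cl}_M)^{\cl}_M$ and a target exponent $k$, I would specialize the outer condition at $n = k+1$ to write $\pip{k+1}(1 \otimes u) = \sum_i t_i(1 \otimes v_i)$ with $v_i \in N^{\cl}_M$, then multiply by $\pip{k+1}$ and apply the inner condition at $m = k+1$ for each $v_i$, producing $\pi^{2/p^{k+1}}(1 \otimes u) \in \IM(T \otimes N \to T \otimes M)$. Since $\pi^{1/p^k} = \bigl(\pip{k+1}\bigr)^{p-2} \cdot \pi^{2/p^{k+1}}$ and $p-2 \geq 0$, this upgrades to $\pi^{1/p^k}(1 \otimes u) \in \IM(T \otimes N \to T \otimes M)$, as required. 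Generalized Colon-Capturing proceeds the same way: given $u \in (Rv)^{\cl}_M \cap \ker f$, the outer condition yields $\pipn(1 \otimes u) = t_n(1 \otimes v)$ for some $t_n \in T$ (since $\IM(T \otimes Rv \to T \otimes M)$ is the cyclic $T$-submodule $T \cdot (1 \otimes v)$); applying $1_T \otimes f$ and $f(v) = x_{k+1} + J$ gives $t_n x_{k+1} \in JT$; the almost \CM\ axiom yields $\pip{m} t_n \in JT$ for every $m$; and the same exponent trick produces $\pip{k'}(1 \otimes u) \in \IM(T \otimes Jv \to T \otimes M)$ for every target $k'$, so $u \in (Jv)^{\cl}_M$.

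The main obstacle is this exponent arithmetic. The argument naturally produces statements about $\pi^{1/p^n + 1/p^m}(1 \otimes u)$ lying in the relevant image, and one must convert these into statements about $\pi^{1/p^k}$ for arbitrary $k$. The factorization $1/p^k = (p-2)/p^{k+1} + 2/p^{k+1}$ does exactly this, but it relies essentially on $T$ containing the entire compatible $p$-power root system of $\pi$, so that $\bigl(\pip{k+1}\bigr)^{p-2} \in T$ for all $k$; this is the one place where the specific structure of $\pipinfty$ (rather than a single root) is used.
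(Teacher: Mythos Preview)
Your proposal is correct and follows essentially the same route as the paper's proof: the same verification of each axiom, the same contradiction for faithfulness, and the same exponent trick $1/p^k = (p-2)/p^{k+1} + 2/p^{k+1}$ powering both idempotence and generalized colon-capturing. The only cosmetic difference is that the paper keeps the two exponents $n,n'$ general before specializing to $n=n'$, whereas you specialize to $n=m=k+1$ from the outset; the content is identical.
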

 
\begin{proof}
First, we show that cl gives a closure operation. Let $N \subseteq M$ be $R$-modules. It is clear that $N \subseteq N_M^{\cl}$. Additionally, if $N \subseteq N' \subseteq M$, and $u \in N_M^{\cl}$, then for all $n>0$, 
\[\pipn \otimes u \in \IM(T \otimes N \to T \otimes M) \subseteq \IM(T \otimes N' \to T \otimes M).\]
 Hence $N_M^{\cl} \subseteq (N')_M^{\cl}$. It remains to show that cl is idempotent. Suppose that $u \in (N_M^{\cl})_M^{\cl}$. Then for all $n>0$, 
 \[\pipn \otimes u \in \IM(T \otimes N_M^{\cl} \to T \otimes M).\] So we can write $\pipn \otimes u=\sum t_i \otimes m_i$, with the $m_i \in N_M^{\cl}$. Hence 
 \[\pip{n'} t_i \otimes m_i \in \IM(T \otimes N \to T \otimes M)\] for all $i$ and for all $n'>0$. This implies that for all $n,n'>0$, \[\pipn \pip{n'} \otimes u \in \IM(T \otimes N \to T \otimes M).\] In particular, $\pipn \pipn \otimes u \in \IM(T \otimes N \to T \otimes M)$ for all $n>0$. Multiplying by $\pi^{(p-2)/p^n}$, we get $\pip{n-1} \otimes u \in \IM(T \otimes N \to T \otimes M)$ for all $n>0$, so $u \in N_M^{\cl}$.

Next we prove that cl is functorial. Suppose that $f:M \to W$ is a map of $R$-modules, and $N \subseteq M$. Let $u \in N_M^{\cl}$. Then $\pipn \otimes u \in \IM(T \otimes N \to T \otimes M)$ for all $n>0$, i.e. $\pipn \otimes u = \sum t_i \otimes n_i$ with each $n_i \in N$. Applying $1 \otimes f$, we get $\pipn \otimes f(u)=\sum t_i \otimes f(n_i)$. Since each $f(n_i) \in f(N)$, we have $\pipn \otimes f(u) \in (f(N))_W^{\cl}$, as desired.

To prove semi-residuality, suppose that $N_M^{\cl}=N$. Let $u \in M$ such that $\bar{u} \in 0_{M/N}^{\cl}$. Then $\pipn \otimes \bar{u}=0$ in $T \otimes M/N$, which by right exactness of tensor products implies that $\pipn \otimes u \in \IM(T \otimes N \to T \otimes M)$. Hence $u \in N$, which implies that $\bar{u}=0$. Hence $0_{M/N}^{\cl}=0$.

For faithfulness, suppose that $u \in m_R^{\cl}$. Then $\pipn u \in mT$ for all $n>0$. If $u \not\in m$, then $u$ is a unit, so this implies that $\pipn \in mT$ for all $n>0$. But then $T/mT$ is almost zero, which is a contradiction.

For generalized colon-capturing, suppose $f:M \twoheadrightarrow R/I$, where $I=(x_1,\ldots,x_k)$ and $x_1,\ldots,x_{k+1}$ is part of a \sop\ for $R$, and let $v \in M$ such that $f(v)={\bar{x}}_{k+1}$. Let $u \in (Rv)_M^{\cl} \cap \Ker(f)$. Then 
\[\pipn \otimes u \in \IM(T \otimes Rv \to T \otimes M)\] for all $n>0$. So $\pipn \otimes u=t_n \otimes v$ for some $t_n \in T$. Hence 
\[0=(\text{id} \otimes f)(\pipn \otimes u)=(\text{id} \otimes f)(t_n \otimes v)=t_n \otimes f(v)\] in $T \otimes R/I$. So $t_nx_{k+1} \in IT$. Hence $\pip{n'}t_n \in IT$ for all $n'>0$. So 
\[\pipn \pip{n'} \otimes u \in \IM(IT \otimes Rv \to T \otimes M)=\IM(T \otimes Iv \to T \otimes M)\] for all $n,n'>0$. As in the proof of idempotence, this implies that $\pipn \otimes u \in \IM(T \otimes Iv \to T \otimes M)$ for all $n>0$. Therefore $u \in (Iv)_M^{\cl}$, which completes the proof of generalized colon-capturing.

As a corollary to Proposition \ref{prop: test = ann}, since cl is residual, $\tau_{\cl}(R)=\Ann_R 0^{\cl}_{E_R(k)}$.
\end{proof}

\begin{definition}
Let cl be the closure from Definition \ref{mixedcharclosure}. We define
\[\intr_{\cl}(R)=\sum_{n>0} \sum_{\psi:T \to R} \psi(\pipn T)=\sum_{n>0} \IM(T^* \otimes \pipn T \to R),\]
where $T^*=\Hom_R(T,R)$ and the map sends $h \otimes x \mapsto h(x)$.
\end{definition}

\begin{theorem} \label{thm: test = int mixed char}
Let $R$ be a complete local domain and let cl be the closure defined above. Then \[ \tau_{\cl}(R)=\intr_{\cl}(R).\]
\end{theorem}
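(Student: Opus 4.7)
The plan is to adapt the proof of Theorem \ref{thm: test = int} to this setting, replacing the single tensoring map $e \mapsto b \otimes e$ with the family of maps indexed by the $p$-power roots $\pipn$, and using the Artinian structure of $E$ to reduce to a single such map. Since the preceding proposition has already shown cl is functorial and semi-residual, Proposition \ref{prop: test = ann} gives $\tau_{\cl}(R) = \Ann_R 0_E^{\cl}$, where $E = E_R(k)$, so the task reduces to computing this annihilator.

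First I would re-express $0_E^{\cl}$ as a single kernel. Unpacking the definition, $0_E^{\cl} = \bigcap_{n > 0} \ker(\phi_n)$, where $\phi_n \colon E \to T \otimes_R E$ is the $R$-linear map sending $e \mapsto \pipn \otimes e$. The identity $(\pip{n+1})^p = \pipn$ gives $\ker(\phi_{n+1}) \subseteq \ker(\phi_n)$: if $\pip{n+1} \otimes u = 0$, then multiplying by $(\pip{n+1})^{p-1}$ on the $T$-side yields $\pipn \otimes u = 0$. Hence we have a descending chain of submodules of the Artinian module $E$, which stabilizes at some $\ker(\phi_N)$, so $0_E^{\cl} = \ker(\phi_N)$.

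Next I would run the Matlis-duality computation. Splitting $\phi_N$ as $E \twoheadrightarrow \IM(\phi_N) \hookrightarrow T \otimes E$, I would dualize the short exact sequence $0 \to \ker(\phi_N) \to E \to \IM(\phi_N) \to 0$ via $\Hom_R(-, E)$ (exact because $E$ is injective). Using $\Hom_R(E, E) = R$ (completeness of $R$) and the surjection $\Hom_R(T \otimes E, E) \twoheadrightarrow \Hom_R(\IM(\phi_N), E)$, the same bookkeeping as in Theorem \ref{thm: test = int} identifies $\tau_{\cl}(R) = \Ann_R \ker(\phi_N)$ with the image of the precomposition map $\phi_N^* \colon \Hom_R(T \otimes E, E) \to R$. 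Hom-tensor adjunction $\Hom_R(T \otimes E, E) \cong \Hom_R(T, \Hom_R(E, E)) = T^*$ then rewrites this image as $\{\psi(\pip{N}) : \psi \in T^*\}$, since a map $f \colon T \otimes E \to E$ corresponding to $\psi \in T^*$ sends $\pip{N} \otimes e \mapsto \psi(\pip{N}) \cdot e$.

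Finally, I would compare to $\intr_{\cl}(R) = \sum_{n > 0} \sum_\psi \psi(\pipn T)$. For any $t \in T$ the map $\psi_t(s) := \psi(ts)$ again lies in $T^*$, so $\psi(\pipn t) = \psi_t(\pipn)$, which shows $\sum_\psi \psi(\pipn T) = \{\psi(\pipn) : \psi \in T^*\}$; applying the Matlis-duality step to $\phi_n^*$ for arbitrary $n$ identifies this with $\Ann_R \ker(\phi_n)$, and all such ideals are contained in $\Ann_R 0_E^{\cl} = \tau_{\cl}(R)$, so $\intr_{\cl}(R) \subseteq \tau_{\cl}(R)$. The reverse containment comes from the $n = N$ term, which is already exactly $\tau_{\cl}(R)$. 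The main subtlety is the chain-stabilization step: the Artinian structure of $E$ is what collapses the infinite family of almost-vanishing conditions indexed by $n$ down to a single $\phi_N$, playing the role that choosing finitely many generators $b_1, \ldots, b_n$ of $B$ played in Theorem \ref{thm: test = int}.
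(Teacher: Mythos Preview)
Your proof is correct and follows essentially the same strategy as the paper: reduce $0_E^{\cl}$ to a finite intersection using the Artinian property of $E$, then run Matlis duality exactly as in Theorem \ref{thm: test = int}. The one difference worth noting is that you observe the kernels $\ker(\phi_n)$ form a \emph{descending} chain (via $(\pip{n+1})^p=\pipn$), so the Artinian condition collapses the intersection to a single $\ker(\phi_N)$ rather than to a finite subset $\{n_1,\ldots,n_t\}$ as in the paper. This is a genuine simplification of the bookkeeping: you dualize a single map $E\to T\otimes E$ instead of a map $E\to (T\otimes E)^{\oplus t}$, and the final comparison with $\intr_{\cl}(R)$ becomes cleaner since the $n=N$ term already gives all of $\tau_{\cl}(R)$. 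Your explicit verification that $\sum_{\psi}\psi(\pipn T)=\{\psi(\pipn):\psi\in T^*\}$ via the substitution $\psi_t(s)=\psi(ts)$ is also a point the paper leaves implicit.
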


\begin{proof}

Let $E=E_R(k)$ be the injective hull of the residue field $k$ of $R$. By Proposition \ref{prop: test = ann}, $\tau_{\cl}(R)=\Ann_R(0^{\cl}_E)=(0:0^{\cl}_E)$; hence $c\in \tau_{\cl}(R)$ if and only if $c\cdot 0^{\cl}_E=0$, but 
\[0^{\cl}_E=\bigcap_{n>0} \ker(E\xrightarrow{\alpha_n} T\otimes E),\] 
where $\alpha_n$ is given by $e \mapsto \pipn \otimes e$.
Since $E$ is Artinian, there are elements $n_1,\ldots,n_t > 0$ such that this is equal to 
\[\bigcap_{n \in \{n_1,\ldots,n_t\}} \ker(E \xrightarrow{\alpha_n} T \otimes E).\] 
We can rewrite this as $\ker(\phi)$, where $\phi=(\phi_1,\ldots,\phi_t):E \to (B \otimes E)^{\oplus t}$ sends 
\[e \mapsto (\pip{n_1} \otimes e,\pip{n_2} \otimes e,\ldots,\pip{n_t} \otimes e).\] 

First, suppose $c \in \tau_{\cl}(R)$, so that $c \cdot 0_E^{\cl}=c\ker(\phi)=0$. Then \[0^{\cl}_E\subseteq \Ann_E(c),\] and by Matlis duality the map  \[\frac{R}{cR}=\Hom_{R}(\Ann_E(c),E) \to \Hom_{R}(0^{\cl}_E,E)\] is surjective. But applying Matlis duality to the exact sequence \[0\to \ker(\phi)  \to E \xrightarrow{\phi} (T\otimes E)^{\oplus t} \] gives \[ \Hom_{R}(0^{\cl}_E,E)=\frac{R}{\sum_{n\in\{n_1,\ldots,n_t\}} \IM(\Hom_{R}(T,R)\to R)},\] where the maps $\Hom_{R}(T,R)\to R$ are given by $\psi\mapsto \psi(\pip{n_i})$ for each $n \in \{n_1,\ldots,n_t\}\}$. From the surjection $R/cR\to \Hom_{R}(0_E^{\cl},E)$ we can now conclude that 

\[c R \subseteq \sum_{n\in\{n_1,\ldots,n_t\}} \IM(\Hom_{R}(T,{R})\to R) .\] 

This gives us the desired result.

For the reverse containment, suppose that $c \in \intr_{\cl}(R)$. Then there are $n'_1,\ldots,n'_s>0$ such that $c \in \sum_{n \in \{n'_1,\ldots,n'_s\}} \IM(\Hom_R(T,R) \to R)$, where the $i$th map $\Hom_R(T,R) \to R$ sends $f \mapsto f(\pip{n'_i})$. We can enlarge the set $n_1,\ldots,n_t$ to include $n'_1,\ldots,n'_s$. Then $c \in \sum_{n \in \{n_1,\ldots,n_t\}} \IM(\Hom_R(T,R) \to R)$. Hence we have a surjection 
\[R/cR \to \frac{R}{\sum_{n \in \{n_1,\ldots,n_t\}} \IM(\Hom_R(T,R) \to R)}.\]
 Applying Matlis duality, we get an injection
\[ \Hom_R\left(\frac{R}{\left(\sum_{n \in \{n_1,\ldots,n_t\}} \IM(\Hom_R(T,R) \to R)\right)},E\right) \hookrightarrow \Hom_R(R/cR,E)=\Ann_E c.\] But the module on the left is $0_E^{\cl}$. Hence $c \in \Ann_R 0_E^{\cl}$, so $c \in \tau_{\cl}(R)$.
\end{proof}

\section{Acknowledgments}

The authors would like to thank Neil Epstein, Haydee Lindo, Keith Pardue, Karl Schwede, Kevin Tucker, Janet Vassilev, and Yongwei Yao for helpful conversations that improved this paper tremendously. In particular, Janet Vassilev shared information on interior operations, Karl Schwede suggested working on the mixed characteristic case, Kevin Tucker suggested Remark \ref{noncompletecounterexample}, Haydee Lindo taught the authors about trace ideals, Neil Epstein listened to some of the main proofs, Keith Pardue gave advice on Remark \ref{basremark} and Yongwei Yao discussed the proof of Proposition \ref{generationandclosures} with the second named author. The anonymous referee also made suggestions that improved the exposition of the paper.

\bibliographystyle{alpha}
\bibliography{References}


\vspace{.6cm}

{\small
\noindent  \textsc{Toronto, Ontario, Canada} \\ \indent \emph{Email address}: {\tt felipe@layer6.ai}

\vspace{.25cm}

\noindent \small \textsc{Department of Mathematics, George Mason University, Fairfax, VA 22030} \\ \indent  \emph{Email address}: {\tt rrebhuhn@gmu.edu}

\end{document}